\newcommand\@dotsep{5}
\DeclareMathOperator{\Prim}{Prim}
\def\id{\mathrm{id}}
\def\btop{\mathbin{\top}}
\DeclarePairedDelimiter{\cnorm}{\vvvert}{\vvvert}
\DeclarePairedDelimiter{\fnorm}{[\mkern-4mu]}{[\mkern-4mu]}
\newtheorem{theorem}{Theorem}[section]
\newtheorem*{theorem*}{Theorem}
\theoremstyle{plain}
\newtheorem{corollary}[theorem]{Corollary}
\newtheorem{lemma}[theorem]{Lemma}
\newtheorem{prop}[theorem]{Proposition}
\numberwithin{equation}{section}
\theoremstyle{definition}
\newtheorem{defn}[theorem]{Definition}
\newtheorem{expl}[theorem]{Example}
\newtheorem{numba}[theorem]{}
\newenvironment{example}{\pushQED{\qed}\begin{expl}}{\popQED\end{expl}}
\crefname{numba}{paragraph}{paragraphs}
\Crefname{numba}{Paragraph}{Paragraphs}
\theoremstyle{remark}
\newtheorem{remark}[theorem]{Remark}
\newcommand{\BRP}{\ensuremath{\mathscr{C}}}
\newcommand{\R}{\mathbb{R}}
\newcommand{\Q}{\mathbb{Q}}
\newcommand{\N}{\mathbb{N}}
\newcommand{\bX}{\mathbf{X}}
\newcommand{\bY}{\mathbf{Y}}
\newcommand{\bZ}{\mathbf{Z}}
\newcommand{\cH}{\mathcal{H}}
\def\cF{\mathcal{F}}
\def\cT{\mathcal{T}}
\def\cP{\mathcal{P}}
\newcommand{\D}[2]{\mathscr{D}^{#1}_{#2}}
\def\FnSp{C^{0,1-\varepsilon}}
\def\Lip{\mathrm{Lip}}
\def\tdot{\Forest{[]}}
\def\tlI{\Forest{[[]]}}
\def\tlII{\Forest{[[[]]]}}
\def\tv{\Forest{[[][]]}}
\newcommand{\vertiii}[1]{{\left\vert\kern-0.25ex\left\vert\kern-0.25ex\left\vert #1
\right\vert\kern-0.25ex\right\vert\kern-0.25ex\right\vert}}
\newcommand{\g}{\mathfrak{g}}
\newcommand{\InfChar}[2]{ \g(#1, #2) }       
\newcommand{\Char}[2]{     G(#1, #2) }     
\newcommand{\LB}[1][\cdot \hspace{1pt} , \cdot]{\left[\hspace{1pt} #1 \hspace{1pt} \right]}
\newcommand{\dd}{\,\mathrm{d}}
\begin{document}
\title{The geometry of controlled rough paths}
\author[M. Ghani]{Mazyar Ghani Varzaneh}
\address{Mathematics Institute, TU Berlin, Str. des 17. Juni 136, 10586 Berlin, Germany}
\email{mazyarghani69@gmail.com}
\author[S. Riedel]{Sebastian Riedel}
\address{Leibniz Universit\"at Hannover, Welfengarten 1, 30167 Hannover, Germany}
\email{riedel@math.uni-hannover.de}
\author[A. Schmeding]{Alexander Schmeding}
\address{Nord universitet, Høgskoleveien 27, 7601 Levanger, Norway}
\email{alexander.schmeding@nord.no}
\author[N. Tapia]{Nikolas Tapia}
\address{Weierstrass Institute, Mohrenstr. 39, 10117 Berlin, Germany \& Mathematics Institute, TU Berlin,
Str. des 17. Juni 136, 10586 Berlin, Germany}
\email{tapia@wias-berlin.de}
\urladdr{https://www.wias-berlin.de/people/tapia}
\thanks{SR acknowledges financial support by the DFG via Research Unit FOR 2402. NT is supported by the DFG MATH$^+$ Excellence Cluster}
\keywords{Continuous fields of Banach spaces, rough paths, controlled rough paths}
\subjclass[2020]{34K50, 37H10, 37H15, 60H99, 60G15}
\begin{abstract}
We prove that the spaces of controlled (branched) rough paths of arbitrary order form a continuous field of Banach spaces. This structure has many similarities to an (infinite-dimensional) vector bundle and allows to define a topology on the total space, the collection of all controlled path spaces, which turns out to be Polish in the geometric case. The construction is intrinsic and based on a new approximation result for controlled rough paths. This framework turns well-known maps such as the rough integration map and the It\^o-Lyons map into continuous (structure preserving) mappings. Moreover, it is compatible with previous constructions of interest in the stability theory for rough integration. 
\end{abstract}
\maketitle

\tableofcontents

\section{Introduction and statement of results}

One of the key insights in rough path theory is that there is no canonical integration theory that allows to integrate two arbitrary paths of low regularity against one another\footnote{This fact is visible e.g. in stochastic analysis: there is no exclusive notion of a stochastic integral, both It\^o and Stratonovich integral have their justification.}. Lyons' fundamental observation was that paths have to be augmented with higher order objects which play the role of iterated integrals in order to build a robust theory of controlled ordinary differential equations \cite{Lyo98}, and he called these augmented paths \emph{rough paths}. Later, Gubinelli realized that given a reference rough path, there is a canonical notion of an integral that allows to integrate paths that ``look like the reference path'' on small time scales, known as \emph{controlled rough paths} \cite{Gub04, Gub10}. These principles were carried over from the world of paths to the world of distributions (or generalized functions) by Hairer \cite{Hai14} and Gubinelli-Imkeller-Perkowski \cite{GIP15}. In Hairer's theory of regularity structures, the reference distribution has to be augmented with products of itself and its derivatives, and multiplying distributions is explained when they are \emph{modelled} after the reference distribution. 

Although these concepts are nowadays used extensively in the theory of stochastic ordinary and partial differential equations, the mathematical structure which is formed by rough paths and their controlled paths is still not very well understood. Since for every reference \(\alpha\)-rough path $\mathbf{X}$, the set of all controlled paths $\mathscr{D}^\alpha_{\mathbf{X}}$ constitutes a linear (Banach) space, it is natural to suspect that they form some sort of ``infinite-dimensional vector bundle'' \cite[p. 123]{Ina19}. However, making these ideas precise, one encounters several difficulties, starting with the fact that the space of rough paths does not carry any known (infinite-dimensional) manifold structure. Surprisingly, it turns out that there is still a non-canonical homeomorphism transforming the collection of spaces of controlled paths to a trivial infinite-dimensional vector bundle \cite[Remark 4.8]{FaH20}. However, this map is highly non-explicit since it uses the Lyons-Victoir extension theorem \cite{LaV07}, and the applicability of this result in practice is unclear. Furthermore, note that the existence of this homeomorphism does not imply that the spaces of controlled rough paths form a smooth (Banach) vector bundle.

One motivation for this article is an observation two of us made in \cite{GRS22}. In this work, we considered the solution map induced by linear stochastic delay differential equations (SDDE) driven by a Brownian motion, which turns out to be a linear map between spaces of controlled paths. Since these spaces were random, we faced serious measurability issues when considering, for example, the operator norm for this map. To overcome these issues, we proved that the spaces of controlled paths form a \emph{measurable field of Banach spaces} \cite[Definition 3.13]{GRS22}. It turns out that this structure is extremely useful and a perfect infinite-dimensional substitute for what is called a \emph{measurable bundle} \cite[1.9.2 Definition]{A1998}. In fact, we could prove that SDDEs induce random dynamical systems (RDS) on this measurable field of Banach spaces, and that important theorems like the multiplicative ergodic theorem hold for these RDSs \cite{GVR21}. Note, however, that we proved the existence of a measurable field only for Brownian-type rough paths, i.e. in the regime of an H\"older-index close to $\frac{1}{2}$. One goal of the present article is to generalize this important result to rough paths of arbitrary regularity.\footnote{We note that extending these results to arbitrary regularity is of interest, for example, for financial applications in the context of rough volatility models \cite{BFG2016,GJR2018,MW2021} where the typical regularity of the driving noise is of the order of \(\alpha=\tfrac1{10}\).}

In the present paper, we first construct a novel approximation of controlled rough paths by rough integrals of smooth functions and
smooth remainders. We do this for paths which are controlled by branched rough paths of arbitrary order.
It will turn out that once the algebra underlying the process has been worked out, it is straightforward to construct a
dense subset of controlled rough paths depending continuously on the underlying controlling path.
Our results here are a far reaching generalisation of our earlier results in \cite{GRS22}.
Based on this novel approximation result, we are able to construct a finer structure for the spaces of controlled rough
paths.
Indeed we prove that the spaces of controlled rough paths form a so-called \emph{continuous field of Banach spaces} (see \Cref{defn_cont:field}). This structure is well-known from the representation theory of $C^\ast$-algebras (see e.g.\ \cite{Dix77,FaD88}).
It will turn out that our field of Banach spaces sits in the middle between the two trivial bundles, namely for $\alpha
\in (1/3,1/2)$ we have
  $$\mathscr{C}^{\alpha} \times (C^\infty\times C^\infty) \subseteq \mathscr{C}^{\alpha} \ltimes \mathscr{D}^{\alpha} \subseteq \mathscr{C}^{\alpha} \times ({C}^{2\alpha} \times {C}^{\alpha})$$
where the trivial bundle on the right is the one from \cite{FaH20} and the trivial bundle on the left forms a ``dense''
subset of the field of Banach spaces (we will say more about the topology we use here below).

In more detail, our results subsume the following theorem:
\begin{theorem}
  Fix \(\alpha\in(0,1)\) and let \(\BRP^\alpha\) denote the space of $\alpha$-branched rough paths.
For \(\bX\in\BRP^\alpha\), denote by \(\D{\alpha}{\bX}\) the associated space of controlled paths.
Then the family \((\D{\alpha}{\bX})_{\bX\in\BRP^\alpha}\) forms a continuous field of Banach spaces (FoBS) over \(\BRP^\alpha\).
  \label{thm:main.intro}
\end{theorem}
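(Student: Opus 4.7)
The plan is to verify the axioms defining a continuous field of Banach spaces (see \Cref{defn_cont:field}) by exhibiting a distinguished family $\Gamma_0$ of sections over $\BRP^\alpha$ such that (i) $\bX \mapsto \|\sigma(\bX)\|_{\D{\alpha}{\bX}}$ is continuous for every $\sigma \in \Gamma_0$, and (ii) for each $\bX\in\BRP^\alpha$, the evaluations $\{\sigma(\bX) : \sigma \in \Gamma_0\}$ form a dense subset of $\D{\alpha}{\bX}$. Once these two properties are secured, the space of continuous sections is obtained as the local uniform closure of $\Gamma_0$, and the closure axiom of a continuous field follows from a standard $3\varepsilon$-argument together with completeness of each fibre.

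The natural candidates for $\Gamma_0$ are dictated by the novel approximation theorem announced in the introduction. For a smooth function $f$ (and, for $\alpha$ small, additional smooth data playing the role of a \emph{smooth remainder}), rough integration against the driver $\bX$ together with the canonical Gubinelli-type derivatives produces an element $\sigma_{f}(\bX) \in \D{\alpha}{\bX}$. I would let $\Gamma_0$ denote the linear span of these smooth sections. Continuity of $\bX \mapsto \|\sigma_{f}(\bX)\|_{\D{\alpha}{\bX}}$ then reduces to local Lipschitz estimates for the rough integration map applied to a fixed smooth integrand against a varying driver; such estimates are classical for geometric rough paths and extend to the branched regime once the underlying Hopf-algebraic bookkeeping is carried out. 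Pointwise density of the evaluations in each $\D{\alpha}{\bX}$ is then, by construction, exactly the content of the paper's approximation result.

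The main obstacle is that the fibres $\D{\alpha}{\bX}$ genuinely depend on $\bX$, so ``approximation'' cannot be formulated inside a fixed Banach space but must be phrased in terms of sections of the family. In effect, the hard work is not in producing a smooth approximant to a given controlled rough path at a single $\bX$, but in ensuring that this approximant arises as the evaluation of a section $\sigma \in \Gamma_0$ whose evaluations at nearby drivers $\bY$ are controlled in $\D{\alpha}{\bY}$ with error that is locally uniform in the driver. Making the algebra underlying the rough integral and the Gubinelli derivative explicit enough for the continuity estimates to be uniform on compact subsets of $\BRP^\alpha$ is the technical heart of the argument; this is precisely the purpose of the approximation framework developed in the paper. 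Once that framework is in place, the verification of the field axioms reduces to an organisational exercise combining continuity of rough integration with the density provided by the approximation theorem.
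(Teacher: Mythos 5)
Your proposal follows essentially the same route as the paper: build a pre-field of sections $\bX\mapsto\Gamma_\bX(f)$ by rough integration of tuples of smooth functions indexed by forests, prove continuity of $\bX\mapsto\cnorm{\Gamma_\bX(f)}_\alpha$ via a local Lipschitz estimate in $\rho_\alpha(\bX,\tilde\bX)$ (\Cref{thm:lin.rho}), prove fibrewise density via a piecewise-affine local approximation glued over a partition (\Cref{prop:approximation_dyadic_controlled}), and then complete the resulting pre-field to a continuous field --- the paper invokes Dixmier's completion result (\Cref{prop:fieldcompletion}) where you propose an explicit $3\varepsilon$-closure, but these amount to the same step. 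The one point you gloss over is that the density only holds in the closure $\mathscr{D}^{\alpha,\beta}_{\bX}$ under the weaker norm $\cnorm{\cdot}_\beta$ with $\beta<\alpha$, so these closures, rather than $\mathscr{D}^{\alpha}_{\bX}$ itself, are the fibres of the field.
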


\Cref{thm:main.intro} has several implications from which we will list a few here:

\begin{itemize}
  \item Every continuous field of Banach spaces allows to define an intrinsic topology (that we call the \emph{tube topology}) on the total space, i.e., in our case on
    the product space
\[
  \prod_{\bX\in\BRP^\alpha}\D\alpha\bX.
\]
It turns out (\Cref{sec:control-bundle}) that this topology is compatible with rough integration, therefore making the It\^{o}-Lyons map a \emph{continuous section} of the bundle. This also gives a nice interpretation of the ``metric'' that compares two controlled paths that live in different fibres (see \cite[p. 74]{FaH20}): adding the distance of the reference rough paths to it, we can show that this metric is exactly the one inducing the topology on the total space. This also implies that the total space is a Polish space when restricting to geometric rough paths\footnote{See \cite[Section 2.4]{FaH20} for a precise definition of geometric rough path.} (\Cref{thm:total_space_polish}).

\item The continuous field of Banach spaces induces a so-called \emph{Banach bundle} of controlled paths, cf. \Cref{sec:fobs}. It turns out that well-known mappings in rough paths theory such as the rough integration map and the It\^o-Lyons map are structure-preserving mappings on this Banach bundle. We would like to mention here that for proving these statement, we had to establish sharp bounds for the rough integral (\Cref{prp:int.bounded}) and local Lipschitz continuity of the It\^o-Lyons map (\Cref{thm:ItoLyonscts}) \emph{in full generality}, i.e. for rough integrals and rough differential equations driven by branched rough paths of any order. Although these bounds are widely accepted in the rough paths community, we could not find them explicitly worked out anywhere in the literature. Having closed this gap is another contribution of the present work.

    \item  In \cite[Remark 4.9]{FaH20}, it is claimed that ``the notion of 'controlled rough paths' (...) does not come with a natural approximation theory''. As a remedy, one can construct non-canonical approximations using the Lyons-Victoir extension theorem \cite[Exercise 4.8]{FaH20}, but this is not trivial as the reader can easily convince herself. Furthermore, the construction in the cited reference only works in the case $\tfrac13<\alpha<\tfrac12$. We will show that \Cref{thm:main.intro} immediately yield \emph{canonical} approximations for controlled rough paths of any order (\Cref{remark:smooth_approx}). In principle, this approximation result should yield, as particular cases, results concerning convergence and stability properties of random dynamical systems \cite{GRS22}, rough differential equations \cite{FaZ18,FaH20}, and numerical schemes for stochastic (partial) differential equations \cite{BaBaRaRaS20,LT2019}, but we do not explore this further and content ourselves with setting up the general framework underlying these results.

    \item Continuous fields of Banach spaces are also measurable fields. The results in this article allow to study dynamical properties of SDDE as in \cite{GRS22} driven by more general processes than Brownian motion, e.g. fractional Brownian motion with lower Hurst parameter.

\end{itemize}

The remainder of the article is organized as follows: in \Cref{sec:brp} we recall the definition of branched rough paths
and set up basic notation and results necessary for the next sections. In \Cref{sec:main} we introduce the main
construction of the paper, namely how to build a controlled rough path out of a collection of smooth functions.
Next, in \Cref{sec:short-time} we use this construction to show how to locally approximate any given controlled path by
piecewise-linear controlled paths in the Banach norm, and then extend this approximation to the full time interval in
\Cref{sec:local-global}. In \Cref{sec:fobs} we recall the main definition of a continuous field of Banach spaces and
show that the bundle of controlled paths satisfies this definition with the space of branched rough paths as base space,
and use this result to construct the so-called tube topology.
We end by showing several continuity results under this particular topology for well-known objects associated to rough paths, such as the
It\^{o}-Lyons map, in \Cref{sec:control-bundle}.

\section{Branched rough paths}\label{sec:brp}
Let us first recall the Hopf algebraic framework for branched and controlled rough paths.
In what follows, we write $\N = \{1,2,3,\ldots\}$ for the set of natural numbers and $\N_0 = \N \cup \{0\}$. All linear structures are defined over the field of real numbers, denoted by \(\R\).

\begin{numba}
  Let \(A\) be a finite, non-empty set, and denote by \(\mathcal T\) the linear span of all rooted trees decorated with labels from $A$.
  We recall that the \emph{decorated Connes--Kreimer Hopf algebra} \(\cH\) is the polynomial algebra
  $\cH = S(\mathcal{T})$, which can be identified with the vector space spanned by all forests of
  decorated trees; the basis will be denoted by \(\cF\).
  We grade $\cH$ by the number of nodes, and we denote the \emph{degree of \(h\in\cH\)} by \(|h|\).
  There is a unique forest of degree zero, called the \emph{empty forest} and denoted by \(\bm 1\).
  We also set, for each \(n\ge 0\),
  \[
    \cF_n\coloneq\{h\in\cF:|h|=n\},\quad \cF_{(n)}\coloneq\{h\in\cF:|h|\le n\},\quad\cF_{(n)}^<\coloneq\cF_{(n)}\setminus\cF_n
  \]
  and we let \(\cH_n\) and \(\cH_{(n)}\) denote the corresponding linear spans of $\cF_n$ and $\cF_{(n)}$, respectively.
  We finally introduce the set \(\cF^+\coloneq\cF\setminus\{\bm1\}\) and its linear span
  \[
    \cH^+=\bigoplus_{n>0}\cH_n.
  \]

The space \(\cH\) comes with a collection of maps \(([\cdot]_a, a\in A)\),
where \([\tau_1\dotsm\tau_n]_a\) is obtained by grafting the trees
  \(\tau_1,\dotsc,\tau_n\) to a new root labeled by \(a\).
  Observe that by definition, \(|[h]_a|=|h|+1\).
  Moreover, for each tree \(\tau\in\mathcal T\) with \(|\tau|=n+1\) there is a unique label
  \(a\in A\) and a unique forest \(h\) such that \([h]_a=\tau\).
  These maps define uniquely a \emph{coproduct} \(\Delta\colon\cH\to\cH\otimes\cH\) via the
  relations \(\Delta\bm 1=\bm 1\otimes\bm 1\), \(\Delta(h_1\dotsm
  h_n)=\Delta h_1\dotsm\Delta h_n\), and
  \begin{equation}
  \label{eq:delta.tree}
    \Delta [h]_a=({\operatorname{id}}\otimes [\cdot]_a)\Delta h+[h]_a\otimes\bm 1
  \end{equation}
  for all \(a\in A\) and \(h, h_1,\dotsc,h_n\in\cH\).
  One can immediately check from this definition that \(\Delta\) is coassociative, i.e., the
  identity \((\Delta\otimes\id)\circ\Delta=(\id\otimes\Delta)\circ\Delta\) holds.
  Therefore, the triple \((\cH,\cdot,\Delta)\) defines a bialgebra.

  It can be shown that for trees \(\tau\in\mathcal T\), the image \(\Delta\tau\) admits a representation
  in terms of \emph{admissible cuts} of \(\tau\) \cite{CK1998}, but we refrain from providing
  further details since we will not make use of it in the following.
  We shall instead use sumless Sweedler's notation
  \[
    \Delta h=h_{(1)}\otimes h_{(2)},\quad\Delta'h=h'\otimes h''
  \]
  where $\Delta'h\coloneq\Delta h-h\otimes\bm 1-\bm 1\otimes h$ denotes the \emph{reduced coproduct}.
  Since product and coproduct respect the grading we obtain a graded and connected bialgebra of
  finite-type, i.e.\ disassembling degreewise we obtain $\cH=\bigoplus_{n \in \N_0} \cH_n$ where
  $\cH_0=\R\bm1$ and $\dim \cH_n < \infty$. Note that any graded connected bialgebra is automatically a
  Hopf algebra whose antipode $S$ can be computed recursively (whence we do not discuss it here, but see \cite{manHopf}).

  We introduce the \emph{n-fold iterated coproduct}
  \(\Delta^{(n)}\colon\cH\to\cH^{\otimes(n+1)}\) inductively by \(\Delta^{(0)}=\mathrm{id}\), and
  \(\Delta^{(n)}=(\Delta^{(n-1)}\otimes\mathrm{id})\circ\Delta\) for all \(n\ge 1\).
  Note that these are indeed well defined by coassociativity of \(\Delta\).
  An iterated reduced coproduct can be defined in a similar way.
  In particular, we can write the image of an element \(h\in\cH\) using Sweedler's notation as
  \[
    \Delta^{(n)}h=h_{(1)}\otimes\dotsm\otimes h_{(n+1)},\quad(\Delta')^{(n)}h=h^{(1)}\otimes\dotsm\otimes
    h^{(n+1)}.
  \]
  Finally, we remark that the \(n\)-fold tensor product \(\cH^{\otimes n}\) is also graded, with
  graded components
  \[
    \left( \cH^{\otimes n}
    \right)_k=\bigoplus_{j_1+\dotsb+j_n=k}\cH_{j_1}\otimes\dotsm\otimes\cH_{j_n},
  \]
  and so, since \(\Delta\) is a graded map, we see that if \(|h|=k\) then
  \(|h_1|+\dots+|h_{n+1}|=k\) for all \(n\ge 0\).
\end{numba}

  \begin{numba}
    Denote by $\cH^*$ the dual space of $\cH$, i.e.\ the space of all linear mappings $\cH \rightarrow \R$ and let $m_{\mathbb R}$ be the multiplication map of real numbers. We write $\langle \phi , h\rangle$ for the duality pairing between $\cH^*$ and $\cH$ given by evaluation. The convolution product
    \[ \phi \star \psi \coloneq m_{\mathbb R} \circ (\phi \otimes \psi ) \circ \Delta \]
    turns $\cH^*$ into a unital algebra (the unit being the counit $\varepsilon$ of $\cH$).
    Then the \emph{character group} $(\Char{\cH}{\R}, \star)$ of $\cH$ is defined as
    \begin{displaymath}
      \Char{\cH}{\R} \coloneq \left\{\phi \in \cH^* : \langle \phi ,ab\rangle = \langle \phi ,a\rangle\langle \phi ,b\rangle, \forall a,b \in \cH \text{ and } \langle \phi ,1_{\cH}\rangle = 1\right\}.
    \end{displaymath}
    Inversion in $\Char{\cH}{\R}$ is induced by precomposition with the antipode, i.e.\ for a character $\iota (\varphi) =\varphi^{-1} = \varphi \circ S$. The counit $\varepsilon$ of $\cH$ thus becomes the unit of the character group. One can show,\cite{BaDaS16} that the character group carries a natural Lie group structure whose Lie algebra is given by the \emph{Lie algebra of infinitesimal characters} $(\InfChar{\cH}{\R},\LB[\cdot,\cdot])$;
    $$\InfChar{\cH}{\R} \coloneq \{\psi \in \text{Hom} (\cH,\R) : \langle \psi, ab\rangle = \langle \psi, a\rangle\langle\varepsilon, b\rangle + \langle\varepsilon,a\rangle\langle \psi , b\rangle, \forall a,b\in \cH\}$$
    with the Lie bracket $\LB[\psi, \kappa] \coloneq \psi \star \kappa - \kappa \star \psi$.

    There is an injection \(\cH\hookrightarrow\cH^*\) mapping \(h\in\cH\) to the linear functional \(h'\mapsto\delta_{h,h'}\) (the Kronecker delta mapping $h$ to $1$).
    In the sequel we will just identify \(h\in\cH\) with this functional.
    Using this identification, we write dual elements \(X\in\cH^*\) as formal forest series, i.e.,
    \[
      X=\sum_{h\in\cF}X^h h
    \]
    where \(X^h=\langle X,h\rangle\in\R\).
  \end{numba}

\begin{numba}
  \label{par:prim}
  An important subspace is the space of \emph{primitive elements}
  \[
    \Prim\coloneq\{h\in\cH:\Delta'h=0\}.
  \]
  Let \(h\) and \(h'\) be forests.
  We define the \emph{natural growth operation} \(h\btop h'\) as the sum of all forests obtained by
  grafting \(h\) to every node of \(h'\), normalized by \(|h'|\).
  This definition is extended bilinearly to the natural growth operator \(\btop\colon\cH\otimes\cH\to\cH\).
  For example
  \begin{align*}
    \tdot\tdot\btop\tlI&= \frac12\left( \Forest{[[][][]]}+\Forest{[[[][]]]}
    \right)\\
    \tlI\btop\tdot\tdot&= \frac12\left(
    \tlII\tdot+\tdot\tlII \right)=\tlII\tdot
  \end{align*}
  We observe that this operator is neither associative nor commutative.
  Given a collection \(h_1,\dotsc,h_n\in\cH\), we set
  \[
    \top(h_1,\dotsc,h_n)=(\dotsb((h_1\btop h_2)\btop h_3)\dotsb)\btop h_n.
  \]

  The following results are due to Foissy:
  \begin{lemma}{{\cite[Lemma 4.3 and Theorem 9.6]{F2002}}}\label{Foissy_Basis}
    \label{lem:foissy}
  \begin{enumerate}
    \item If \(p_1,\dotsc,p_n\in\mathrm{Prim}\), then
    \[
      \Delta'\top(p_1,\dotsc,p_n)=\sum_{j=1}^{n-1}\top(p_1,\dotsc,p_j)\otimes\top(p_{j+1},\dotsc,p_n).
    \]
    \label{lem:ftopc}
  \item Let \(\cP=\{p_i:i\ge 1\}\) be a basis for \(\Prim\). Then
    \[
      \cP^\top\coloneq\{\top(p_{i_1},\dotsc,p_{i_k}):i_1,\dotsc,i_k\ge 1,k\ge 1\}
    \]
    is a basis for \(\cH\).
  \end{enumerate}
  \end{lemma}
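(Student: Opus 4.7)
Both parts reduce to establishing one compatibility between the natural growth and the coproduct, which I shall call the \emph{primitive grafting formula}: for every $h\in\cH$ and every primitive $p\in\Prim$,
\begin{equation*}
\Delta'(h\btop p) = h'\otimes (h''\btop p) + h\otimes p, \qquad \Delta' h = h'\otimes h''. \tag{$\ast$}
\end{equation*}
To prove $(\ast)$, I would start from the definition $h\btop p = |p|^{-1}\sum_{v\in V(p)} h@_v p$, where $h@_v p$ is the tree obtained by attaching the roots of $h$ as new children of $v$, and classify the admissible cuts of each $h@_v p$ into three types: (a)~cuts lying entirely inside the copy of $h$, which, after summing over $v\in V(p)$ and dividing by $|p|$, reassemble into $h'\otimes(h''\btop p)$; (b)~the unique cut that severs all the graft-edges, contributing $h\otimes p$ after the same averaging; and (c)~cuts involving at least one edge of $p$, whose contributions pair into sums proportional to the entries of $\Delta' p$ and therefore vanish by primitivity. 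This is the main technical step; the delicate part is showing that the residual $|V(p_{(2)})|/|p|$-style weights in class (c) organise into $\Delta' p$ rather than into $\Delta p$. Small-case verifications such as $h=p=\tdot$ giving $\Delta'\tlI=\tdot\otimes\tdot$, or $h=\tdot$ and $p=\tdot\tdot-2\tlI$ giving $\Delta'(\tlI\tdot-\tv-\tlII)=\tdot\otimes p$, provide useful sanity checks.

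Granting $(\ast)$, part~(1) is a straightforward induction on $n$. The base $n=1$ is immediate since $p_1\in\Prim$. For the step, set $T=\top(p_1,\dotsc,p_{n-1})$, apply $(\ast)$ with $p=p_n$, and insert the inductive hypothesis for $\Delta'T$; the resulting double sum telescopes, with the extra ``$h\otimes p$'' term supplying the missing $j=n-1$ summand of the desired expression.

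For part~(2), linear independence of $\cP^\top$ is extracted from~(1) by induction on length $\ell(\top(p_{i_1},\dotsc,p_{i_k}))\coloneq k$: iterating~(1) yields
\[
  (\Delta')^{(k-1)}\top(p_{i_1},\dotsc,p_{i_k}) = p_{i_1}\otimes\dotsm\otimes p_{i_k}\in \Prim^{\otimes k},
\]
and distinct words map to distinct pure tensors which are linearly independent in $\Prim^{\otimes k}$ because $\cP$ is a basis of $\Prim$. For spanning, since $|h\btop h'|=|h|+|h'|$ the statement is graded, and one is reduced to verifying the Poincar\'e-series identity $\dim\cH_n = \sum_{k\ge 1}\sum_{n_1+\dotsb+n_k=n} \dim\Prim_{n_1}\dotsm\dim\Prim_{n_k}$. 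This follows from the Chapoton--Livernet identification of the graded dual of $\cH$ with the universal enveloping algebra of the free pre-Lie algebra on the one-node decorated trees, combined with the Oudom--Guin pre-Lie PBW theorem. The true obstacle is the combinatorial verification of~$(\ast)$; everything else is essentially formal manipulation.
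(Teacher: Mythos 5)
The paper itself gives no proof of this lemma---it is quoted directly from Foissy \cite{F2002}---so there is no internal argument to measure yours against. On its own terms, your reduction is sound where it is carried out: the identity $(\ast)$ is true (for $h\in\cH^+$; note it fails for $h=\bm1$, though that case is never needed), your induction for part (1) is correct, and the linear-independence argument (apply $(\Delta')^{(k-1)}$, observe that all words of length $<k$ die, and induct downward on the maximal length occurring in a relation) is complete. Your worry about the class-(c) cuts also resolves favourably: a cut meeting an edge of a tree $\tau$ in the support of $p$ produces a genuine nonempty admissible cut of $\tau$ and hence lands automatically in the terms of $\Delta'\tau$, not $\Delta\tau$; splitting class (c) according to whether the grafting vertex lies in the crown or in the trunk, each sub-family is the image of $\Delta'p$ under a linear map (with weights $|p_{(1)}|/|p|$ resp.\ $|p_{(2)}|/|p|$ applied componentwise), and therefore vanishes outright because $\Delta'p=0$. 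So $(\ast)$, though only planned in your write-up, is within reach of your plan.

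The genuine gap is in the spanning half of part (2). Chapoton--Livernet together with Oudom--Guin give $\cH^{\mathrm{gr}}\cong U(\g)\cong S(\g)$ as graded vector spaces, where $\g$ is the free pre-Lie algebra on $A$; but this only reproduces the tautological count of forests in terms of trees and says nothing about $\dim\Prim(\cH)_n$. Dually, $\Prim(\cH)_n$ is the space of indecomposables of $U(\g)$ in degree $n$, i.e.\ $(\g/[\g,\g])_n$, and your Poincar\'e-series identity is equivalent to $S(\g)$ and $T(\g/[\g,\g])$ having the same graded dimensions---which holds precisely because the free pre-Lie algebra is \emph{free as a Lie algebra}. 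That freeness is a separate theorem (Foissy, and independently Chapoton), of essentially the same depth as the cofreeness statement you are trying to prove; it does not follow from the two results you cite. A way to bypass the count entirely is the route recorded in the paper's appendix: Foissy's recursive projector $\pi_1(h)=h-\sum_{(h)}h'\btop\pi_1(h'')$ satisfies $h=\pi_1(h)+\sum_{(h)}h'\btop\pi_1(h'')$, so once one checks that $\pi_1$ maps into $\Prim$, spanning follows by induction on the degree (expand $h'$ in $\cP^\top$ by the inductive hypothesis and absorb $\btop\,\pi_1(h'')$ as one more letter of the word), with linear independence then supplied by your iterated-coproduct argument.
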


  As before, for \(n\in\N\) we define
  \[
    \cP_{(n)}\coloneq\cP\cap\cH_{(N)},\quad\cP_n\coloneq\cP\cap\cH_n,\quad\cP_{(n)}^<\coloneq\cP_{(n)}\setminus\cP_n,
  \]
  and simialrly for \(\cP^\top_{(n)}\) and so on.

  As a consequence, we have that for every forest \(h\in\cF\), there are coefficients
  \((c_\rho(h):\rho\in\cP^\top)\) with only finitely many being non zero, such that
  \[
    h=\sum_{\rho\in\cP^\top}c_{\rho}(h)\rho.
  \]

  In particular, every element (where only finitely many coefficients \(Z^h\) are non-zero)
  \[
      Z = \sum_{h\in\cF}Z^h h\in\cH
  \]
  can be rewritten as
  \[
    Z = \sum_{\rho\in\cP^\top}Z^{\rho}\rho,
  \]
  with
  \[
    Z^\rho=\sum_{h\in\cF}c_\rho(h)Z^h.
  \]

  Denote by \(\{f_{i_1,\dotsc,i_k}\}\) the basis of the graded dual \(\cH^{\mathrm{gr}}\), dual to \(\mathcal P^\top\), i.e., such that
  \(\langle f_{i_1,\dotsc,i_k},\top(p_{j_1},\dotsc,p_{j_l})\rangle=1\) if and only if \(k=l\) and \(i_1=j_1,\dotsc,i_k=j_k\).
  We will also use the notation \(\rho^*\coloneq f_{i_1,\dotsc,i_k}\) whenever
  \(\rho=\top(p_{i_1},\dotsc,p_{i_k})\in\cP^\top\).
  In particular, we identify the change of basis coefficients as \(c_\rho(h)\coloneq\langle\rho^*,h\rangle\), and we
  have the identity
  \[
    \sum_{h\in\cF}c_\rho(h)h=\rho^*.
  \]

  Therefore, any dual forest series
  \[
    X=\sum_{h\in\cF}X^h h\in\cH^*
  \]
  can be be re-expanded in the new basis as
  \[
    X=\sum_{\rho\in\cP^\top}X^\rho\rho^*
  \]
  where
  \[
    X^\rho=\sum_{h\in\cF}c_\rho(h)X^h.
  \]
\end{numba}

\begin{numba}
  Given a rooted tree \(\tau\) and a vertex \(v\in V(\tau)\), let \(\tau_v\) denote the subtree of
  \(\tau\) with \(v\) as root.
  For \(v\in V(\tau)\), let \(SG(\tau,v)\) be the group of permutations of identical branches out of
  \(v\), i.e., if \(\{v_1,\dotsc,v_k\}\) are the children of \(v\), then \(SG(\tau,v)\) is the group
  generated by the permutations that exchange \(\tau_{v_i}\) and \(\tau_{v_j}\) when they are
  isomorphic rooted trees.
  The \emph{symmetry group of \(\tau\)} is the direct product
  \[
    SG(\tau)\coloneq\prod_{v\in V(\tau)}SG(\tau,v),
  \]
  and the \emph{symmetry factor \(\Sigma(\tau)\)} of \(\tau\) is defined to be the order of
  \(SG(\tau)\) \cite{H2003}.
  For a forest \(h\in\cH\), we let \(\Sigma(h)\coloneq\Sigma(\mathfrak I_j(h))\) for some
  \(j\in A\).
  It is not hard to see that in fact this definition is independent of the choice of \(j\in A\).

  Letting \(\zeta_h(h')=\Sigma(h)\delta_{h,h'}\), one can show that \(\{\zeta_h:h\in\cF\}\) is a
  basis of \(\cH^*\) dual to the forest basis \cite[Proposition 4.4]{H2003}, and in particular
  \(\zeta_{\bm1}=\varepsilon\).
\end{numba}

  \begin{numba}\label{numba:pre-Lie}
    Recall that a (right) pre-Lie algebra is a vector space \(V\) with a bilinear operator \(\triangleleft\colon V\otimes
    V\to V\) such that the associator \(\mathrm a_\triangleleft(x,y,z)\coloneq(x\triangleleft y)\triangleleft
    z-x\triangleleft(y\triangleleft z)\) is symmetric in the last two variables, i.e.,
    \[
      \mathrm a_\triangleleft(x,y,z)=\mathrm a_\triangleleft(x,z,y)
    \]
    for all \(x,y,z\in V\).

    There is a left pre-Lie structure on \(\cT\) given by grafting of trees, denoted by
    \(\curvearrowleft\colon\cT\otimes\cT\to\cT\).
    \begin{example}
      \begin{align*}
        \tdot\curvearrowleft\tv&= \Forest{[[[][]]]}\\
        \tv\curvearrowleft\tdot&= 2\Forest{[[[]][]]}+\Forest{[[][][]]}
      \end{align*}
    \end{example}
    This is in fact the free pre-Lie algebra on \(d\) generators \cite{CL2001}.

    Grafting can be extended to an operator \(\curvearrowleft\colon\cT\otimes\cH\to\cT\) by grafting
    every forest on the right to some node of the tree on the left.
    \begin{example}
      \begin{align*}
        \tdot\curvearrowleft\tdot\tdot&= \tv,\\
        \tlI\curvearrowleft\tdot\tdot&= \Forest{[[[][]]]}+2\Forest{[[[]][]]}+\Forest{[[][][]]}
      \end{align*}
    \end{example}

    Let \(V\) be a vector space. Let us recall that the symmetric algebra \(S(V)\) carries a coproduct \(\Delta_*\)
    defined by \(\Delta_*v=v\otimes 1+1\otimes v\) for all \(v\in V\).
    We stick to the sumless Sweedler's notation for this coproduct as well.
    A \emph{symmetric brace algebra} \cite{LM2005} is a vector space \(V\) equipped with a brace \(V\otimes S(V)\to V, x\otimes a\mapsto
    x\{a\}\) such that
    \begin{align*}
      x\{1\}&= x\\
      x\{y_1\dotsm y_n\}\{a\}&= x\{y_1\{a_{(1)}\}\dotsm y_n\{a_{(n)}\}a_{(n+1)}\}.
    \end{align*}

    It can be shown that grafting endows \(\cT\) with the structure of a symmetric brace~\cite{OG2008}.\footnote{In general, this holds
      for any pre-Lie algebra, i.e., if \((V,\triangleleft)\) is pre-Lie, then \( x\{a\}=x\triangleleft a\) is a symmetric
    brace for some suitable extension of \(\triangleleft\) to \(S(V)\) on the right.}
    In particular, the identity
    \begin{equation}
      (\tau\curvearrowleft\bar{h}_1)\curvearrowleft\bar{h}_2=\tau\curvearrowleft(\bar{h}_1\star\bar{h}_2)
      \label{eq:arrow.star}
    \end{equation}
    holds for any \(\tau\in\cT\) and \(\bar{h}_1,\bar{h}_2\in\cF\).

    Finally, we extend \(\curvearrowleft\) to \(\cH\otimes\cH\) via
    \[
      h_1h_2\curvearrowleft\bar h=(h_1\curvearrowleft\bar h_{(1)})(h_2\curvearrowleft\bar h_{(2)}).
    \]
    \begin{example}
			Since \(\tdot\) is primitive and
      \[
        \Delta_*\tdot\tdot=\tdot\tdot\otimes\bm 1+2\,\tdot\otimes\tdot+\bm 1\otimes\tdot\tdot
      \]
      we have
      \begin{align*}
        \tdot\tdot\curvearrowleft\tdot&= 2\tdot\tlI,\\
        \tdot\tdot\curvearrowleft\tdot\tdot&= 2\tv\tdot+2\tlI\tlI.\qedhere
      \end{align*}
    \end{example}
    Furthermore, the \(\star\) product admits the following description:
    \[
      \zeta_{\bar h}\star \zeta_h=\zeta_{(h\curvearrowleft \bar h_{(1)})\bar h_{(2)}}.
    \]
    More concretely, we have the formula
    \[
      \zeta_{\bar h}\star \zeta_{\tau_1\dotsm\tau_n}=\zeta_{(\tau_1\curvearrowleft\bar h_{(1)})\dotsm(\tau_n\curvearrowleft\bar h_{(n)})\bar h_{(n+1)}}.
    \]
    \begin{example}
      We compute all products of forests up to degree 3:
      \begin{align*}
        \zeta_{\tdot}\star\zeta_{\tdot}&=
        \zeta_{\tdot\tdot}+\zeta_{\tlI}&&\\
        \zeta_{\tdot\tdot}\star\zeta_{\tdot}&=
        \zeta_{\tdot\tdot\tdot}+2\zeta_{\tdot\tlI}+\zeta_{\tv}&
        \zeta_{\tdot}\star\zeta_{\tdot\tdot}&=
        \zeta_{\tdot\tdot\tdot}+2\zeta_{\tdot\tlI}\\
        \zeta_{\tlI}\star\zeta_{\tdot}&=\zeta_{\tdot\tlI}+\zeta_{\tlII}
        &\zeta_{\tdot}\star\zeta_{\tlI}&=
        \zeta_{\tdot\tlI}+\zeta_{\tlII}+\zeta_{\tv}.
      \end{align*}
      In particular we see that the \(\star\) product is not commutative.
      \label{exp:prod}
    \end{example}
		\begin{remark}
			When written in terms of the ``pure basis'' \(\cF\), there are some non-trivial factors in front of each term in the \(\star\) product. For
			example
			\[
				\tdot\star\tdot=2\,\tdot\tdot+\tlI,\quad\tdot\tdot\star\tdot=3\,\tdot\tdot\tdot+\tdot\tlI+\tv.
			\]
			This becomes even more apparent in the case of decorated trees, as
			\[
				\Forest{[i]}\star\Forest{[j]}=(1+\delta_{i,j})\Forest{[i]}\Forest{[j]}+\Forest{[j[i]]}.
			\]
			The formula in terms of the \(\zeta_h\) basis stays, however, the same in both cases.
		\end{remark}

    \begin{lemma}
      \label{lem:brace}
      Let \(h,\bar h\in\cH\) and denote by \(\Pi\colon \cF\to\cT\) the projection onto trees.
      The identity
      \[
        \Pi\left( \bar h\star [h]_a \right)=[\bar h\star h]_a
      \]
      holds.
    \end{lemma}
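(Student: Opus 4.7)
The strategy is to reduce to a calculation in the dual basis $\{\zeta_h\}$ of the paper, where the $\star$-product admits the clean formula already recorded. Starting from
\[ \zeta_{\bar h}\star\zeta_{[h]_a}=\zeta_{([h]_a\curvearrowleft\bar h_{(1)})\bar h_{(2)}},\qquad\bar h_{(1)}\otimes\bar h_{(2)}=\Delta_{*}\bar h, \]
I would note that $[h]_a\curvearrowleft\bar h_{(1)}$ is always a linear combination of trees, so the product $([h]_a\curvearrowleft\bar h_{(1)})\bar h_{(2)}$ contributes a tree in the forest expansion only when $\bar h_{(2)}=\bm 1$. Among the Sweedler terms of $\Delta_{*}\bar h$ only $\bar h\otimes\bm 1$ has this property, whence
\[ \Pi\bigl(\zeta_{\bar h}\star\zeta_{[h]_a}\bigr)=\zeta_{[h]_a\curvearrowleft\bar h}. \]

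The combinatorial heart of the argument is the identity
\[ [h]_a\curvearrowleft\bar h=\bigl[(h\curvearrowleft\bar h_{(1)})\bar h_{(2)}\bigr]_a, \]
which I would establish by inspecting where the individual trees of $\bar h=\tau_1\dotsm\tau_k$ can be grafted inside $[h]_a$. Each $\tau_i$ is attached either to the root of $[h]_a$ (where it becomes a new sibling of the existing children, contributing a factor inside the brackets) or to some node of $h$ (contributing to $h\curvearrowleft(\cdot)$). Summing over all bipartitions of $\{\tau_1,\dotsc,\tau_k\}$ into these two classes reproduces the splitting given by $\Delta_{*}\bar h$; commutativity of $\cH$ together with cocommutativity of $\Delta_{*}$ then allow rewriting the result in the form displayed on the right.

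Combining the two steps yields $\Pi(\zeta_{\bar h}\star\zeta_{[h]_a})=\zeta_{[g]_a}$ with $g=(h\curvearrowleft\bar h_{(1)})\bar h_{(2)}$, and the $\zeta$-formula identifies the right-hand side with $\zeta_{[\bar h\star h]_a}$. Translating back to the pure basis via $\zeta_g=\Sigma(g)\,g$ is harmless because $\Sigma(g)=\Sigma([g]_a)$ (by the very definition of $\Sigma$ on forests), so the bracket operation commutes with the conversion and the identity $\Pi(\bar h\star[h]_a)=[\bar h\star h]_a$ drops out. The main obstacle is the combinatorial identity of the middle paragraph: one must verify that the Sweedler bookkeeping via $\Delta_{*}$ correctly accounts for the multiplicities that appear when $\bar h$ contains repeated trees.
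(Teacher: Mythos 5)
Your proposal is correct and follows essentially the same route as the paper's proof: both pass to the \(\zeta\)-basis, apply the grafting formula for \(\star\), observe that only the \(\bar h_{(2)}=\bm1\) term of \(\Delta_*\bar h\) survives the projection \(\Pi\), and conclude via the invariance of the symmetry factor under \([\cdot]_a\). The only (minor) difference is in the middle step: you verify the grafting identity \([h]_a\curvearrowleft\bar h=[(h\curvearrowleft\bar h_{(1)})\bar h_{(2)}]_a\) by direct combinatorial inspection of where the trees of \(\bar h\) land (correctly noting that \(\Delta_*\) accounts for the multiplicities of repeated trees), whereas the paper derives the same fact from \([h]_a=\Forest{[a]}\curvearrowleft h\) together with the symmetric brace identity \eqref{eq:arrow.star}.
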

    \begin{proof}
      Let us start by noticing that \( [h]_a=\Forest{[a]}\curvearrowleft h. \)
      Now, from the formula for the product in terms of grafting we see that
      \begin{align*}
        \zeta_{\bar h\star [h]_a}&= \zeta_{([h]_a\curvearrowleft\bar h_{(1)})\bar h_{(2)}}\\
        &= \zeta_{\left( (\Forest{[a]}\curvearrowleft h)\curvearrowleft\bar h_{(1)} \right)\bar h_{(2)}}.
      \end{align*}
      Therefore, by \cref{eq:arrow.star},
      \begin{align*}
        \zeta_{\Pi\left( \bar h\star [h]_a \right)}&= \zeta_{(\Forest{[a]}\curvearrowleft h)\curvearrowleft\bar h}\\
        &= \zeta_{\Forest{[a]}\curvearrowleft(\bar h\star h)}\\
        &= \zeta_{[\bar h\star h]_a}.
      \end{align*}
      The proof is finished by noting that the map \([\cdot]_a\) leaves the symmetry factor invariant.
  \end{proof}
\end{numba}

  \begin{numba}
    Let \(n \in \N\), then we denote by \(C_n\) the vector space of continuous scalar functions on
    \([0,1]^n\), vanishing whenever two contiguous arguments coincide.
    More precisely, \(C_n\) consists of functions \(f\coloneq[0,1]^n\to\R\) such that \(f_{t_1\dotsm
    t_n}=0\) when \(t_i=t_{i+1}\) for some index \(i\in\{1,\dotsc,n-1\}\).
    As a convention, we set \(C_0=\R\).

    Given \(f\in C_n\), we define \(\delta f\in C_{n+1}\) by setting
    \[
      \delta f_{t_1\dotsm t_{n+1}}=\sum_{k=1}^{n+1}(-1)^{k}f_{t_1\dotsm\hat t_{k}\dotsm t_{n+1}}
    \]
    where \(\hat t_k\) means that this argument is omitted.
    For instance, if \(f\in C_1\) then \(\delta f_{s,t}=f_t-f_s\); and if \(f\in C_2\) then \(\delta
    f_{s,u,t}=f_{s,t}-f_{s,u}-f_{u,t}\) and so on.
    It is a known fact that if \(f\in C_2\) is such that \(\delta f=0\) then \(f=\delta g\) for some
    \(g\in C_1\).

    Given \(f\in C_2\) and \(\alpha>0\), we define
    \[
      \|f\|_\alpha\coloneq\sup_{s\neq t}\frac{|f_{s,t}|}{|t-s|^\alpha}
    \]
    and we set \(C_2^\alpha\coloneq\{f\in C_2:\|f\|_\alpha<\infty\}\).
  \end{numba}

  Before we continue let us fix some useful notation (which has the unfortunate sideffect of identifying the dual and primal Hopf algebra structures).
  \begin{numba}
    Recall that $\cH \rightarrow \cH^\ast$ is an injection defined on $h \in \mathcal{F}$ by the linear functional $\delta_h$ (and we suppress the identification in the notation), hence it makes sense to use both the evaluation \(\bX^h_{s,t}\coloneq\langle\bX_{s,t},h\rangle\) and $X_{s,t} \star h \coloneq X_{s,t} \star \delta_h$.
  \end{numba}

  \begin{numba}
    \label{numba:brp}
    For $\alpha \in (0,1)$, an \emph{\(\alpha\)-Hölder branched rough path} is a family of characters \((\bX_{s,t}:s,t\in[0,T])\) over \(\cH\)
    such that
    \[
      |\langle\bX_{s,t},h\rangle|\lesssim|t-s|^{\alpha|h|}
    \]
    for all \(h\in\cH\), and \(\bX_{s,u}\star\bX_{u,t}=\bX_{s,t}\) for all \(s,u,t\in[0,T]\).

    With the help of the notation introduced in the previous paragraph, we remark that Chen's identity
    can be rewritten as\begin{equation}
      \delta\bX_{s,u,t}^h=\langle\bX_{s,u}\otimes\bX_{u,t},\Delta'h\rangle=\bX_{s,u}^{h'}\bX_{u,t}^{h''}.
      \label{eq:chencop}
    \end{equation}
    This identity together with \cref{eq:delta.tree} implies that for all labels \(a\in A\) we have
    \[
      \delta\bX_{s,u,t}^{[h]_a}=\bX_{s,u}^h\bX^{\Forest{[a]}}_{u,t}+\bX_{s,u}^{h'}\bX_{u,t}^{[h'']_a}.
    \]
    In particular, if \(h\in\Prim(\cH)\), there exists a path \(\Gamma^h\in C_1\) such that
    \(\bX^h_{s,t}=\delta\Gamma^h_{s,t}\).
    In the case where \(h=\Forest{[a]}\) for some \(a\in A\), we just write \(X^a\).
    We observe that this path is in general \textbf{not} unique, but there is a canonical choice with
    \(\Gamma^h_0=0\).
    We also note that by definition, \(\Gamma^h\) is \(\alpha|h|\)-Hölder continuous.

    We will denote by \(\BRP^\alpha\) the set of \(\alpha\)-Hölder branched rough paths.
    We also set \(N\coloneq\lfloor\alpha^{-1}\rfloor\) so that \(N\alpha\le 1<(N+1)\alpha\).

    We endow the set \(\BRP^\alpha\) with the distance
    \[
      \rho_\alpha(\bX,\tilde\bX)\coloneq\max_{h\in\cF^+_{(N)}}\|\bX^h-\tilde{\bX}^h\|_{|h|\alpha}
    \]
    and we define
    \[
      \cnorm{\bX}_\alpha\coloneq\rho_\alpha(\bm 1,\bX)=\max_{h\in\cF^+_{(N)}}\|\bX^h\|_{|h|\alpha}.
    \]
  \end{numba}

  \begin{numba}
    Let \(\bX\in\BRP^\alpha\) for $\alpha \neq 1/n$ for any $n \in \N$.\footnote{This technical restriction on $\alpha$ is standard, cf.\ \cite{LaV07} and we will require it from now on.}
    A \emph{path controlled by \(\bX\)} is a path \(\bZ\colon[0,T]\to\cH_{(N)}^<\) such that
    \[ |\langle h,\bZ_t\rangle-\langle \bX_{s,t}\star h,\bZ_s\rangle|\lesssim|t-s|^{(N-|h|)\alpha} \]
    for all \(h\in\cF_{(N)}^<\).
    We set \(R^h_{s,t}\coloneq\langle h,\bZ_t\rangle-\langle \bX_{s,t}\star h,\bZ_s\rangle\) and we
    note that this condition is equivalent to requiring that \(R^h\in C^{(N-|h|)\alpha}_2\) for all
    \(h\in\cF_{(N-1)}\).
    We denote the space of controlled paths by \(\mathscr D^{\alpha}_{\bX}\).
    It is a Banach space when endowed with the norm
    \[ \cnorm{\bZ}_\alpha\coloneq\sum_{h\in\cF_{(N)}^<}\left( |\langle h,\bZ_0\rangle|+\|R^h\|_{(N-|h|)\alpha} \right). \]
    Given \(\beta<\alpha\), we let \( \mathscr{D}_{\bX}^{\alpha,\beta}\coloneq\overline{\mathscr{D}_{\bX}^{\alpha}}^{\cnorm{\cdot}_\beta} \).
    
    If $\bZ \in \mathscr D^{\alpha}_{\bX}$ and $\tilde{\bZ} \in \mathscr D^{\alpha}_{\tilde{\bX}}$, we define
    \begin{align}\label{norm:controlled}
      \cnorm{\bZ ; \tilde{\bZ}}_{\alpha} \coloneq \sum_{h\in\cF_{(N)}^<}\left(|\langle h,\bZ_0-\tilde\bZ_0\rangle|+ \|R^h -
      \tilde{R}^h\|_{(N-|h|)\alpha}\right)
    \end{align}
    where \(\tilde{R}^h_{s,t}\coloneq\langle h,\tilde{\bZ}_t\rangle-\langle \tilde{\bX}_{s,t}\star h,\tilde{\bZ}_s\rangle\).

    Unraveling the definition of controlledness in coordinates, we see that the path
    \(Z_t^h\coloneq\langle h,\bZ_t\rangle\) has to satisfy for all $h\in\cF_{(N)}^<$ and the product from \Cref{numba:pre-Lie} the following
    relation to be controlled:
    \begin{equation}
      \label{eq:ctrlh}
      \delta Z_{s,t}^h=\sum_{\bar h\in\cF_{(N-|h|-1)}^+}\langle \bar h\star h,\mathbf{Z}_s\rangle\bX_{s,t}^{\bar h}+R^h_{s,t}
    \end{equation}
    This coincides with Gubinelli's definition in \cite{Gub10}.
    \begin{example}
      \label{ex:ctrl}
      Suppose \(\alpha\in(\tfrac15,\tfrac14)\). A path \(\bZ\in\mathscr D_{\bX}^{\alpha}\) satisfies
      (cf. \cite[Example 8.2]{Gub10}),
      \begin{align*}
        \delta Z_{s,t}^{\bm1}&=
        Z^{\tdot}_s\bX_{s,t}^{\tdot}+Z_s^{\tdot\tdot}\bX_{s,t}^{\tdot\tdot}+Z_{s}^{\tlI}\bX_{s,t}^{\tlI}+Z_s^{\tdot\tdot\tdot}\bX_{s,t}^{\tdot\tdot\tdot}+Z_s^{\tdot\tlI}\bX_{s,t}^{\tdot\tlI}
        +Z_{s}^{\tv}\bX_{s,t}^{\tv}+R^{\bm1}_{s,t},\\
        \delta Z_{s,t}^{\tdot}&=
        \left( 2Z_s^{\tdot\tdot}+Z_s^{\tlI} \right)\bX_{s,t}^{\tdot}+\left(
        3Z_s^{\tdot\tdot\tdot}+Z_s^{\tdot\tlI}+Z_s^{\tv} \right)\bX_{s,t}^{\tdot\tdot}+\left(
        Z_s^{\tdot\tlI}+Z_s^{\tlII} \right)\bX_{s,t}^{\tlI}+R_{s,t}^{\tdot},\\
        \delta Z_{s,t}^{\tdot\tdot}&= \left( 3Z_s^{\tdot\tdot\tdot}+Z_s^{\tdot\tlI}
        \right)\bX_{s,t}^{\tdot}+R_{s,t}^{\tdot\tdot},\\
        \delta Z_{s,t}^{\tlI}&= \left( Z_s^{\tdot\tlI}+Z_s^{\tlII}+Z_s^{\tv} \right)\bX_{s,t}^{\tdot}+R_{s,t}^{\tlI}
      \end{align*}
      with \(Z^{\tdot\tdot\tdot},Z^{\tlII},Z^{\tdot\tlI},Z^{\tv}\in C^{\alpha}\),
      \(R^{\tdot\tdot},R^{\tlI}\in C_2^{2\alpha}\), \(R^{\tdot}\in C_2^{3\alpha}\) and \(R^{\bm1}\in
      C_2^{4\alpha}\), where we have used the computations in \Cref{exp:prod}.
    \end{example}

    Thanks to \Cref{par:prim} the controlledness condition can be rewritten in term of the basis \(\cP^\top\) relative
    to a basis \(\cP\) of \(\Prim\). Indeed, for any \(h\in\cF_{(N)}^<\) the remainder rewrites as
    \[
      R^h_{s,t}=\delta Z^h_{s,t}-\sum_{\rho\in\cP^{\top,<}_{(N-|h|)}}Z^{\rho^*\star h}X^\rho_{s,t}.
    \]
  \end{numba}
  \begin{numba}\label{goodintegrands}
    Controlled paths are ``good integrands'' for rough paths, in the sense that if \(\bZ\in\mathscr
    D^{\alpha}_{\bX}\) then for any label \(a\in A\),
    \[
      \int_0^t Z_s\,\mathrm d\bX^a_s\coloneq\lim_{|\pi|\to
      0}\sum_{[u,v]\in\pi}\sum_{h\in\cF_{(N)}^<}Z_u^h\bX^{[h]_a}_{u,v}
    \]
    exists, and defines what is know as the \emph{rough integral of \(\bZ\) against \(X^a\)}.
    It satisfies the fundamental inequality
    \begin{equation}
\label{eqn:fundamental}
      \left\lvert\int_s^t Z_u\,\mathrm d\bX^a_u-\sum_{h\in\cF_{(N)}^<}Z_s^h\bX^{[h]_a}_{s,t}
      \right\rvert\le C\cnorm{\bZ}_\alpha\cnorm{\bX}_\alpha|t-s|^{(N+1)\alpha}
    \end{equation}
    Moreover, it defines an element \(\mathfrak I_\bX^a(\bZ)\in\cH_{(N)}\) with components
    \[
      \left\langle\bm 1,\mathfrak I_\bX^a(\bZ)\right\rangle=\int_0^tZ_u\,\mathrm d\bX^a_u,\quad\left\langle
      [h]_a,\mathfrak I_\bX^a(\bZ)\right\rangle = Z_t^h
    \]
    and zero otherwise.
    \begin{prop}
      \label{prp:int.bounded}
      The map \(\mathfrak I_\bX^a\colon\D{\alpha}{\bX}\to\D{\alpha}{\bX}\) is bounded, i.e., there is a constant
      \(C=C(\alpha)\) such that
      \[
        \cnorm{\mathfrak I_\bX^a(\bZ)}_\alpha\le C(1+T^\alpha)(1+\cnorm{\bX}_\alpha)\cnorm{\bZ}_\alpha.
      \]
    \end{prop}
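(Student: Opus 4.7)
The plan is to unpack the definition of $\bY\coloneq\mathfrak I_\bX^a(\bZ)$ and then verify the controlled-path axioms together with the norm bound componentwise. The only non-vanishing components of $\bY_t$ sit at $\bm 1$, with $Y^{\bm 1}_t=\int_0^t Z_u\,\mathrm d\bX^a_u$, and at trees of the form $[h]_a$, where $Y^{[h]_a}_t=Z^h_t$. In particular, the initial-value portion of $\cnorm{\bY}_\alpha$ is bounded directly by $\cnorm{\bZ}_\alpha$ since $Y^{\bm 1}_0=0$ and $|Y^{[h]_a}_0|=|Z^h_0|$.

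The key algebraic ingredient is the identity
\[
  \langle\bar h\star g,\bY_s\rangle=\begin{cases}\langle\bar h\star h,\bZ_s\rangle, & g=[h]_a,\\ 0, & \text{otherwise,}\end{cases}
\]
valid for every $\bar h,g\in\cF^+$. This is derived directly from the coproduct formula $\Delta[h']_a=(\id\otimes[\cdot]_a)\Delta h'+[h']_a\otimes\bm 1$ together with injectivity of $[\cdot]_a$: indeed $\langle\bar h\star g,[h']_a\rangle$ picks up the $\bar h\otimes g$ coefficient in $\Delta[h']_a$, which vanishes unless $g$ is a tree rooted at a node labelled $a$, and reduces to $\langle\bar h\star h,h'\rangle$ when $g=[h]_a$. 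As a consequence, $R^{g,Y}_{s,t}\equiv 0$ for every $g\in\cF_{(N-1)}^+$ not of the form $[h]_a$, so those contributions to $\cnorm{\bY}_\alpha$ are trivial. For $g=\bm 1$, the fundamental inequality \eqref{eqn:fundamental} yields $|R^{\bm 1,Y}_{s,t}|\le C\cnorm{\bZ}_\alpha\cnorm{\bX}_\alpha|t-s|^{(N+1)\alpha}$, which lies in $C_2^{N\alpha}$ at the cost of an extra factor $T^\alpha$. For $g=[h]_a$ with $|h|\le N-2$, equating $\delta Y^{[h]_a}_{s,t}=\delta Z^h_{s,t}$, substituting the algebraic identity into the defining formula for $R^{[h]_a,Y}$, and comparing with the controlled equation for $Z^h$ yields
\[
  R^{[h]_a,Y}_{s,t}=R^{h,Z}_{s,t}+\sum_{\bar h\in\cF_{N-|h|-1}}\langle\bar h\star h,\bZ_s\rangle\bX^{\bar h}_{s,t},
\]
whose first summand is bounded by $\cnorm{\bZ}_\alpha|t-s|^{(N-|h|)\alpha}$ and whose remaining summands are each bounded by the product of a pointwise factor $|\langle\bar h\star h,\bZ_s\rangle|$ with $\cnorm{\bX}_\alpha|t-s|^{(N-|h|-1)\alpha}$.

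The main technical obstacle is to bound those pointwise factors $|\langle\bar h\star h,\bZ_s\rangle|$ uniformly in $s\in[0,T]$ in terms of $\cnorm{\bZ}_\alpha$, $\cnorm{\bX}_\alpha$, and $T^\alpha$. I would achieve this by a finite downward induction on $|h'|$: for $|h'|=N-1$ the controlled equation degenerates to $\delta Z^{h'}=R^{h'}$, giving $|Z^{h'}_s|\le\cnorm{\bZ}_\alpha(1+T^\alpha)$; for smaller degrees, the controlled equation expresses $Z^{h'}_s-Z^{h'}_0$ as a sum of $\bX$-pairings against already-bounded lower-order data plus a remainder, yielding the desired estimate after tracking the exponents of $T^\alpha$ and $\cnorm{\bX}_\alpha$ carefully. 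Collecting the bounds on all remainders and initial values gives the claimed estimate, with the constant $C$ depending on $\alpha$ (equivalently, on $N$ and on the combinatorics of $\cH$ up to degree $N$). The delicate point is arranging the induction so that all accumulated factors are absorbed into a single $(1+T^\alpha)(1+\cnorm{\bX}_\alpha)$ rather than into higher powers.
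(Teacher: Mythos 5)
Your proposal follows the paper's proof in all essentials: the same identification of the non-zero components of $\mathfrak I_\bX^a(\bZ)$, the same algebraic identity reducing $\langle\bar h\star[h]_a,\mathfrak I_\bX^a(\bZ)_s\rangle$ to $Z^{\bar h\star h}_s$ (the paper obtains this from \Cref{lem:brace} via the pre-Lie/grafting description, you rederive it directly from the coproduct formula \eqref{eq:delta.tree}; both work), and the same resulting decomposition $\mathfrak R^{[h]_a}_{s,t}=R^h_{s,t}+\sum_{\bar h\in\cF_{N-|h|-1}}\langle\bar h\star h,\bZ_s\rangle\bX^{\bar h}_{s,t}$, with the fundamental estimate \eqref{eqn:fundamental} handling the $\bm 1$-component.

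The one place where you overcomplicate matters --- and where, as written, your argument is not yet closed --- is the treatment of the ``pointwise factors'' $|\langle\bar h\star h,\bZ_s\rangle|$. Note that $\bar h$ runs over $\cF_{N-|h|-1}$ (exact degree, not $\le$), so $\bar h\star h$ is a combination of forests of degree exactly $N-1$; hence the only components of $\bZ$ whose sup-norm you need are $Z^{h'}$ with $|h'|=N-1$. For those, the controlled relation \eqref{eq:ctrlh} degenerates to $\delta Z^{h'}=R^{h'}$ (the sum is empty), giving $\|Z^{h'}\|_\infty\le|Z^{h'}_0|+T^\alpha\|R^{h'}\|_\alpha\le(1+T^\alpha)\cnorm{\bZ}_\alpha$ with no induction whatsoever. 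This is exactly how the paper closes the estimate, and it is why the final constant carries only a single factor $(1+T^\alpha)(1+\cnorm{\bX}_\alpha)$. The downward induction through all degrees that you propose would, if it were actually needed, produce powers $(1+\cnorm{\bX}_\alpha)^{N-1-|h'|}$, and the ``delicate point'' you flag would be a genuine obstruction to the stated bound; the resolution is that the induction never has to descend below the top degree, so the difficulty dissolves.
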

    \begin{proof}
      For all \(h\in\cF_{(N-1)}\) set
      \[
        \mathfrak R^h_{s,t}\coloneq\delta\mathfrak I_\bX^a(\bZ)_{s,t}-\sum_{\bar h\in\cF_{(N-|h|-1)}^+}\mathfrak
        I_\bX^a(\bZ)^{\bar h\star h}_s\bX^{\bar h}_{s,t}.
      \]
      We first show that for all \(h\in\cF_{(N-2)}\) we have
      \[
        \mathfrak R^{[h]_a}_{s,t}=R^{h}_{s,t}+\sum_{\bar h\in\cF_{N-|h|-1}}Z^{\bar h\star h}_s\bX^{\bar h}_{s,t}
      \]
      and zero otherwise.
      Indeed, recall that for any \(h\in\cF_{(N-1)}\), we have that
      \[
        \mathfrak I_\bX^a(\bZ)^{[h]_a}_s=Z^h_s
      \]
      We note that by \Cref{lem:brace}, the identity
      \[
        \left\langle\bar h\star [h]_a,\mathfrak I_\bX^a(\bZ)_s\right\rangle=\left\langle [\bar h\star h]_a,\mathfrak
        I^a_\bX(\bZ)_s\right\rangle=Z^{\bar h\star h}_s
      \]
      holds.
      Hence
      \begin{align*}
        \mathfrak R^{[h]_a}_{s,t}&= \delta Z^h_{s,t}-\sum_{\bar h\in\cF_{(N-|h|-2)}^+}Z^{\bar h\star h}_s\bX^{\bar h}_{s,t}\\
        &= R^h_{s,t}+\sum_{\bar h\in\cF_{N-|h|-1}}Z^{\bar h\star h}_s\bX^{\bar h}_{s,t}.
      \end{align*}

      Now, given \(h\in\cF_{(N-2)}\), the remainder satisfies
      \begin{equation*}
        \|\mathfrak R^{[h]_a}\|_{(N-|h|-1)\alpha}\le \|R^h\|_{(N-|h|)\alpha}T^\alpha+\cnorm{\bX}_\alpha\sum_{\bar h\in\cF_{N-|h|}}\|Z^{\bar h\star h}\|_{\infty}.
      \end{equation*}
      Moreover, from the fundamental estimate \eqref{eqn:fundamental} we see that
      \[
        \|\mathfrak R^{\bm 1}\|_{N\alpha}\le C\cnorm{\bZ}_\alpha\cnorm{\bX}_\alpha
        T^\alpha+\cnorm{\bX}_\alpha\sum_{h\in\cF_{N-1}}\|Z^h\|_{\infty}.
      \]

      For any \(h\in\cF_{N-1}\) we have that
      \[
        \|Z^h\|_{\infty}\le|Z^h_0|+T^\alpha\|Z^h\|_{\alpha}.
      \]

      Finally, the norm can be bounded:
      \begin{align*}
        \cnorm{\mathfrak I_\bX^a(\bZ)}_\alpha&=\sum_{h\in\cF_{(N)}^<}\|\mathfrak R^h\|_{(N-|h|)\alpha}\\
        &= \|\mathfrak R^{\bm 1}\|_{N\alpha}+\sum_{h\in\cF_{(N-1)}^<}\|\mathfrak
        R^{[h]_a}\|_{(N-|h|-1)\alpha}\\
        &\le C\cnorm{\bZ}_\alpha\cnorm{\bX}_\alpha
        T^\alpha+\cnorm{\bZ}_\alpha T^\alpha+\cnorm{\bX}_\alpha\sum_{h\in\cF_{(N)}^<}(|Z^h_0|+\|Z^h\|_{\alpha})\\
        &\le C(1+\cnorm{\bX}_\alpha)(1+T^\alpha)\cnorm{\bZ}_\alpha.\qedhere
      \end{align*}
    \end{proof}
    \begin{prop}\label{cont:int_map}
      Let \(\bX,\tilde\bX\in\BRP^\alpha\) such that \(\cnorm{\bX}_\alpha\vee\cnorm{\tilde{\bX}}_\alpha\le M\) and let \(\bZ\in\D{\alpha}{\bX}\), \(\tilde\bZ\in\D{\alpha}{\tilde\bX}\) be such that \(\cnorm{\bZ}_\alpha\vee\cnorm{\tilde{\bZ}}_\alpha\le M\).
      Then there is a constant $C = C(\alpha,M)$ such that for all \(a\in A\), the bound
      \[
        \cnorm{\mathfrak I_\bX^a(\bZ);\mathfrak I^a_{\tilde\bX}(\tilde\bZ)}_\alpha\le
        C(\cnorm{\bZ;\tilde\bZ}_\alpha+\rho_\alpha(\bX,\tilde\bX))
      \]
      holds uniformly.
    \end{prop}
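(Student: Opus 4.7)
The plan is to follow the strategy of the proof of \Cref{prp:int.bounded}, now tracking the two rough integrals in parallel and estimating component-wise differences. For \(h\in\cF_{(N-1)}\) define
\[
  \mathfrak R^h_{s,t}\coloneq\delta\mathfrak I_\bX^a(\bZ)_{s,t}-\sum_{\bar h\in\cF_{(N-|h|-1)}^+}\mathfrak I_\bX^a(\bZ)^{\bar h\star h}_s\bX^{\bar h}_{s,t}
\]
and analogously \(\tilde{\mathfrak R}^h_{s,t}\). Since \(\mathfrak I^a(\bZ)^h_0\) equals \(Z^{h'}_0\) when \(h=[h']_a\) and vanishes otherwise, the quantity \(\cnorm{\mathfrak I^a_\bX(\bZ);\mathfrak I^a_{\tilde\bX}(\tilde\bZ)}_\alpha\) reduces, up to terms already controlled by \(\cnorm{\bZ;\tilde\bZ}_\alpha\), to the sum of the seminorms \(\|\mathfrak R^h - \tilde{\mathfrak R}^h\|_{(N-|h|)\alpha}\) over \(h\in\cF_{(N)}^<\).

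For every \(h=[h']_a\) with \(h'\in\cF_{(N-2)}\), the identity derived in the proof of \Cref{prp:int.bounded} gives
\[
  \mathfrak R^{[h']_a}_{s,t} - \tilde{\mathfrak R}^{[h']_a}_{s,t} = (R^{h'}_{s,t} - \tilde R^{h'}_{s,t}) + \sum_{\bar h\in\cF_{N-|h'|-1}}\bigl(Z^{\bar h\star h'}_s\bX^{\bar h}_{s,t} - \tilde Z^{\bar h\star h'}_s\tilde\bX^{\bar h}_{s,t}\bigr),
\]
while \(\mathfrak R^h - \tilde{\mathfrak R}^h = 0\) for every other non-\(\bm 1\) forest. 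Telescoping each product as \(Z\bX - \tilde Z\tilde\bX = (Z-\tilde Z)\bX + \tilde Z(\bX - \tilde\bX)\) and invoking the uniform bound \(M\) reduces the task to sup-norm estimates of the form
\[
  \|Z^g - \tilde Z^g\|_\infty \le C(M)\bigl(\cnorm{\bZ;\tilde\bZ}_\alpha + \rho_\alpha(\bX,\tilde\bX)\bigr),\qquad g\in\cF_{(N)}^<,
\]
which I would prove by a downward induction on \(|g|\) using the controlledness relation \eqref{eq:ctrlh} applied to both paths, the base case \(|Z^g_0-\tilde Z^g_0|\le\cnorm{\bZ;\tilde\bZ}_\alpha\), and the telescoping once again.

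The main work is controlling \(\mathfrak R^{\bm 1} - \tilde{\mathfrak R}^{\bm 1}\) in \(C_2^{N\alpha}\), and here I would appeal to a stability version of the sewing lemma applied to the compensator increments
\[
  \Xi_{s,t}\coloneq\sum_{h\in\cF_{(N)}^<}Z_s^h\bX^{[h]_a}_{s,t},\qquad \tilde\Xi_{s,t}\coloneq\sum_{h\in\cF_{(N)}^<}\tilde Z_s^h\tilde\bX^{[h]_a}_{s,t}.
\]
Combining Chen's identity \eqref{eq:chencop}, the controlledness relation \eqref{eq:ctrlh}, and \Cref{lem:brace}, one rewrites \(\delta\Xi_{s,u,t}\) as a sum of products of remainders \(R^h\) and rough path components \(\bX^{\bar h}\) whose Hölder exponents sum to \((N+1)\alpha\); the same manipulation applied to \(\delta(\Xi - \tilde\Xi)_{s,u,t}\), together with the same telescoping and the inductive sup-bounds above, yields
\[
  |\delta(\Xi-\tilde\Xi)_{s,u,t}|\le C(\alpha,M)\bigl(\cnorm{\bZ;\tilde\bZ}_\alpha + \rho_\alpha(\bX,\tilde\bX)\bigr)|t-s|^{(N+1)\alpha},
\]
and the sewing lemma then delivers the required bound on \(\|\mathfrak R^{\bm 1} - \tilde{\mathfrak R}^{\bm 1}\|_{N\alpha}\). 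The principal obstacle is the bookkeeping in this last step: ensuring that every cross term generated by the multiple telescopings pairs a \((\bZ - \tilde\bZ)\)-contribution with a factor from \(\bX\) and every \((\bX - \tilde\bX)\)-contribution with a factor from \(\tilde\bZ\), with all constants remaining uniform in \(M\). Summing the resulting componentwise bounds yields the asserted local Lipschitz estimate.
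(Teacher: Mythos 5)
Your proposal is correct and follows essentially the same route as the paper: the paper likewise applies the Sewing Lemma to the difference of the two compensator germs to control \(\mathfrak R^{\bm 1}-\tilde{\mathfrak R}^{\bm 1}\), uses the identity from \Cref{prp:int.bounded} together with the telescoping \(Z\bX-\tilde Z\tilde\bX=(Z-\tilde Z)\bX+\tilde Z(\bX-\tilde\bX)\) for the components \([h]_a\), and sums over \(h\in\cF_{(N)}^<\). Your explicit downward induction for the sup-norm bounds \(\|Z^g-\tilde Z^g\|_\infty\) is a detail the paper leaves implicit, but it is the right way to close the estimate.
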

    \begin{proof}
      By considering the germ
      \[
        \Xi_{s,t}=\sum_{h\in\cF^<_{(N)}}Z^h_s\bX^{[h]_a}_{s,t}-\sum_{h\in\cF^<_{(N)}}\tilde{Z}^h_s\tilde{\bX}^{[h]_a}_{s,t}
      \]
      it is possible to show, using the Sewing Lemma \cite[Lemma 4.2]{FaH20}, that (using the same notations as in \Cref{prp:int.bounded}) 
      \[
        \|\mathfrak{R}^{\bm 1}-\tilde{\mathfrak{R}}^{\bm1}\|_{N\alpha}\le CT^\alpha(\cnorm{\bZ;\tilde{\bZ}}_\alpha+\rho_\alpha(\bX,\tilde{\bX}))+M\rho_\alpha(\bX,\tilde\bX)+M\sum_{h\in\cF_{N-1}}\|Z^h-\tilde{Z}^h\|_{\infty}.
      \]
      Proceeding in a similar way to the proof of \Cref{prp:int.bounded}, we see that
      \[
        \mathfrak R^{[h]_a}_{s,t}-\tilde{\mathfrak{R}}^{[h]_a}_{s,t}=R^h_{s,t}-\tilde{R}^h_{s,t}+\sum_{\bar h\in\cF_{N-|h|-1}}Z^{\bar h\star h}_s\bX^{\bar h}_{s,t}-\sum_{\bar h\in\cF_{N-|h|-1}}\tilde{Z}^{\bar h\star h}_s\tilde{\bX}^{\bar h}_{s,t}
      \]
      from where the bound
      \[
        \|\mathfrak{R}^{[h]_a}-\tilde{\mathfrak{R}}^{[h]_a}\|_{(N-|h|-1)\alpha}\le\|R^h-\tilde{R}^h\|_{(N-|h|)\alpha}T^\alpha+M\rho_\alpha(\bX,\tilde\bX)+M\sum_{\bar h\in\cF_{N-|h|-1}}\|Z^h-\tilde{Z}^h\|_{\infty}
      \]
      follows.
      Summing over \(h\in\cF_{(N)}^<\) yields the desired bound.
    \end{proof}
  \end{numba}

  \section{Main approximation result}\label{sec:main}
  In this section, we generalise a key result obtained in \cite{GRS22}. There it was shown in \cite[Theorem 3.10]{GRS22} that for a fixed $\alpha$-rough path $\mathbf{X}$, if $\beta < \alpha < 1/2$ are sufficiently close to $1/2$, the set
  $$\left\{(\psi,\psi')\, \middle|\, \psi_{s,t} = \int_s^t f_r\, \mathrm{d}X_{r} + \delta g_{s,t},\, \psi_s' = f_s\ \text{where } f, g \in C^\infty_1\right \},$$
  is dense in $\mathscr{D}^{\alpha,\beta}_{\mathbf{X}}$ (where the integral is understood in the Young sense). As a consequence, if $\mathbf{X}(\omega)$ is a random rough path of $\alpha$-regularity (e.g. the lift of a Brownian motion), the spaces $\{\mathscr{D}^{\alpha,\beta}_{\mathbf{X}(\omega)}\}_{\omega \in \Omega}$ turned out to be a measurable field of Banach spaces \cite[Proposition 3.15]{GRS22}. Our aim is to generalise and strengthen these results. We shall see that the field is indeed a \emph{continuous} field, we will remove the cumbersome conditions imposed on $\alpha,\beta$ in loc.cit.\, and obtain the result for arbitrary $\beta <\alpha$. To this end, we will construct a dense subset of smooth functions for every order:

\begin{defn}
  Fix \(\varepsilon\in(0,1-N\alpha)\) (where we recall that $N\alpha < 1$ due to our assumption that $\alpha \neq 1/n, n\in \N$) and \(\FnSp\) denote the closure of \(C^\infty\) under the
  \((1-\varepsilon)\)-Hölder norm.\footnote{These spaces are also known as the little Lipschitz spaces, see \cite[Chapter 4]{Weaver18} and cf.\ \cite[Theorem 5.33]{FV2010}} We recall that this space is a separable Banach space, and the following inclusions
  hold: let \(\mathrm{PL}\subset\Lip\) denote the space of piecewise linear functions on \([0,T]\), then
  \[
    \overline{\mathrm{PL}}^{\|\cdot\|_{1-\varepsilon}}=\FnSp\subset C^{1-\varepsilon}.
  \]
  Moreover, since \(1-\varepsilon>N\alpha>(N-1)\alpha>\dotsb>\alpha\) we get that
  \[
    \FnSp\subset C^{N\alpha}\subset\dotsb\subset C^\alpha
  \]
  and \(\|f\|_{k\alpha}\le\|f\|_{1-\varepsilon}T^{1-\varepsilon-k\alpha}\) for all \(k\in\{1,\dotsc,N\}\).

  Denote by \(\Omega\subset\FnSp\) a countable dense subset and define for $N \in \N$ the sets
\begin{align*}
  \mathcal{S}_N \coloneq \bigoplus_{h \in \mathcal{F}_{(N-1)}} \FnSp,\qquad \mathcal{S}_N^0
  \coloneq\bigoplus_{h\in\cF_{(N-1)}}\Omega.
\end{align*}
We endow \(\mathcal S_N\) with the norm
\[
  \fnorm{f}\coloneq\max_{h\in\cF_{(N-1)}}\|f^h\|_{1-\varepsilon}.
\]
\end{defn}

It will turn out that (rough) integration of elements in $\mathcal{S}_N$ leads to a dense subset of the Banach space of controlled paths over a given rough path $\bX$.
To establish this result, let us define a map \(\Gamma_{\bX}\colon\mathcal S_N\to\mathscr{D}^\alpha_\bX\) with the property that for all \(h\in\mathcal F_{(N-1)}\), the remainder
\begin{equation}\label{Rdefn}
  R^h_{s,t}=\delta\Gamma_\bX(f)^h_{s,t}-\sum_{\bar h\in\cF^+_{(N-|h|-1)}}\Gamma_\bX(f)^{\bar h\star h}_s\bX^{\bar h}_{s,t}
\end{equation}
satisfies \(|R^h_{s,t}|\le C(1+\cnorm{\bX}_\alpha)^{N-1-|h|}\fnorm{f}|t-s|^{1-\varepsilon}\).
The definition of $\Gamma_\bX (f)$ is recursive: 
\begin{equation}\label{GammaXtop}
\Gamma_\bX(f)_t^h\coloneq f^h_t \text{ for every \(h\in\mathcal F_{N-1}\).}
\end{equation}
Clearly,
\[
  \|R^h\|_{1-\varepsilon}=\|f\|_{1-\varepsilon}\le\fnorm{f}
\]
and in particular \(R^h\in C^{\alpha}_2\) with \(\|R^h\|_{\alpha}\le\fnorm{f}T^{1-\varepsilon-\alpha}\).

Given \(n<N-1\), let \(h\in\mathcal F_n\), and suppose that we have defined \(\Gamma_\bX(f)^{\bar h}\) for all forests
\(\bar h\in\mathcal F\) with \(n<|\bar h|\le N-1\), in a way such that \(|R^{\bar h}_{s,t}|\le C(1+\cnorm{\bX}_\alpha)^{N-1-|\bar h|}\fnorm{f}|t-s|^{1-\varepsilon}\).

Recall that every \(\rho\in\mathcal P^\top\) is of the form \(\rho=\top(p_{i_1},\dotsc,p_{i_k})\) for some integer \(k\ge 1\) and primitive elements \(p_{i_1},\dotsc,p_{i_k}\in\mathcal P\).
Given \(\rho\in\mathcal P^\top\) we define \(\rho^*=f_{i_1,\dotsc,i_k}\) to be its dual basis element.
\begin{lemma}
  \label{lem:gamma.int}
  Let \(h\in\cF_n\) and \(p\in\mathcal P\) be a primitive element with \(|p|<N-n\).
  Then the rough integral
  \[
    \int_0^t\Gamma_\bX(f)^{p^*\star h}_r\dd\bX^p_r\coloneq\lim_{|\pi|\to
    0}\sum_{[a,b]\in\pi}\sum_{\rho\in\cP^{\top,<}_{(N-n-|p|)}}\Gamma_\bX(f)^{\rho^*\star p^*\star h}_a\bX^{\rho\btop p}_{a,b}
  \]
  exists along any sequence of partitions, and it is independent of any choice.
  Moreover,
  \[
    \left\lvert\int_s^t\Gamma_\bX(f)^{p^*\star h}_r\dd\bX^p_{r}-\sum_{\rho\in\cP^{\top,<}_{(N-n-|p|)}}
      \Gamma_\bX(f)^{\rho^*\star p^*\star h}_s\bX^{\rho\btop p}_{s,t}\right\rvert\le
      C\fnorm{f}(1+\cnorm{\bX}_\alpha)^{N-2-n}\cnorm{\bX}_\alpha|t-s|^{1-\varepsilon+|p|\alpha}.
  \]
\end{lemma}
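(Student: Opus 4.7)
The plan is to apply the Sewing Lemma \cite[Lemma 4.2]{FaH20} to the germ
\[
  \Xi_{s,t}\coloneq\sum_{\rho\in\cP^{\top,<}_{(N-n-|p|)}}\Gamma_\bX(f)^{\rho^*\star p^*\star h}_s\bX^{\rho\btop p}_{s,t},
\]
since by construction $\sum_{[a,b]\in\pi}\Xi_{a,b}$ coincides exactly with the Riemann-type sum in the statement. All three assertions---existence of the limit, independence of the partition, and the final bound---reduce to proving an estimate of the form $|\delta\Xi_{s,u,t}|\lesssim|t-s|^{1+\theta}$ for some $\theta>0$. The natural target exponent is $1-\varepsilon+(|p|+1)\alpha$, which strictly exceeds one since the standing assumption $\varepsilon<1-N\alpha$ combined with $(N+1)\alpha>1$ forces $\varepsilon<\alpha$, and every $\rho\in\cP^\top$ satisfies $|\rho|\ge 1$.

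The heart of the proof is the computation of $\delta\Xi_{s,u,t}$. Applying Chen's identity \cref{eq:chencop} to each $\bX^{\rho\btop p}_{s,t}$ and rearranging yields
\[
  \delta\Xi_{s,u,t}=-\sum_\rho\delta\Gamma^{\rho^*\star p^*\star h}_{s,u}\bX^{\rho\btop p}_{u,t}+\sum_\rho\Gamma_s^{\rho^*\star p^*\star h}\bigl\langle\bX_{s,u}\otimes\bX_{u,t},\Delta'(\rho\btop p)\bigr\rangle.
\]
Two ingredients must now be combined. Foissy's formula \cref{lem:ftopc} expresses $\Delta'(\rho\btop p)$ as a sum of splittings $\rho'\otimes(\rho''\btop p)$ in $\cP^\top$ (together with the extremal term $\rho\otimes p$). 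On the other hand, the inductive controlledness relation \cref{Rdefn}, transferred from the forest basis to the functional $\rho^*\star p^*\star h$ by linearity using the convolution formulas of \cref{numba:pre-Lie} (in particular \cref{eq:arrow.star}), expands $\delta\Gamma^{\rho^*\star p^*\star h}_{s,u}$ as a principal sum over $\bX$-increments plus a remainder $R^{\rho^*\star p^*\star h}_{s,u}$. The principal parts match term by term, leaving only
\[
  \delta\Xi_{s,u,t}=-\sum_\rho R^{\rho^*\star p^*\star h}_{s,u}\bX^{\rho\btop p}_{u,t}.
\]
Bounding this through the inductive hypothesis $|R^{\bar h}_{s,u}|\le C(1+\cnorm{\bX}_\alpha)^{N-1-|\bar h|}\fnorm{f}|u-s|^{1-\varepsilon}$ together with $|\bX^{\rho\btop p}_{u,t}|\le\cnorm{\bX}_\alpha|t-u|^{(|\rho|+|p|)\alpha}$, and summing over the finite set $\cP^{\top,<}_{(N-n-|p|)}$, produces the required regularity of $\delta\Xi$. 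The Sewing Lemma then yields existence, partition-independence, and the stated estimate in one stroke.

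The main obstacle is precisely the algebraic cancellation identified above. One has to track carefully the interplay between the forest basis $\cF$ (in which controlledness is formulated), the Foissy basis $\cP^\top$ (in which the germ is indexed through the dual elements $\rho^*$), and the convolution product on $\cH^*$ compatible with the pre-Lie and symmetric brace operations from \cref{numba:pre-Lie}; here the primitivity of $p$ is essential for Foissy's reduced coproduct formula to apply cleanly and for the splittings of $\rho\btop p$ to take the clean form $\rho'\otimes(\rho''\btop p)$. Once this matching is in hand, the rest is routine Sewing-lemma bookkeeping over the finitely many $\rho$ involved.
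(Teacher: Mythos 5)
Your proposal is correct and follows essentially the same route as the paper: the same germ \(\Xi\), the same cancellation of the principal part of \(\delta\Gamma_\bX(f)^{\rho^*\star p^*\star h}\) against the Chen/Foissy expansion of \(\delta\bX^{\rho\btop p}\) leaving \(\delta\Xi_{s,u,t}=-\sum_\rho R^{\rho^*\star p^*\star h}_{s,u}\bX^{\rho\btop p}_{u,t}\), and the same application of the Sewing Lemma with the inductive remainder bound. The only (immaterial) difference is how you verify the exponent exceeds one: the paper uses \(1-\varepsilon+|p|\alpha>(N+|p|)\alpha>1\) directly from \(\varepsilon<1-N\alpha\), whereas you route through \(\varepsilon<\alpha\) and \(|\rho|\ge 1\).
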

\begin{proof}
  Let
  \[
    \Xi_{s,t}\coloneq \sum_{\rho\in\mathcal P^{\top,<}_{(N-n-k)}}\Gamma_\bX(f)^{\rho^*\star p^*\star h}_s\bX^{\rho\btop p}_{s,t}.
  \]
  Observe that
  \begin{equation*}
    \delta\Xi_{s,u,t}= \sum_{\rho\in\mathcal P^{\top,<}_{(N-n-k)}}\left( \Gamma_\bX(f)^{\rho^*\star p^*\star h}_s\delta\bX^{\rho\btop p}_{s,u,t}-\delta\Gamma_\bX(f)^{\rho^*\star p^*\star h}_{s,u}\bX^{\rho\btop p}_{u,t} \right).
  \end{equation*}
  By recalling that
  \[
    \delta\bX^{\rho\btop p}_{s,u,t}=\bX^\rho_{s,u}\bX^p_{u,t}+\sum_{(\rho)}\bX^{\rho'}_{s,u}\bX^{\rho''\btop p}_{u,t},
  \]
  a standard computation gives
  \[
    \delta\Xi_{s,u,t}=-\sum_{\rho\in\mathcal P^{\top,<}_{(N-n-|p|)}}R^{\rho^*\star p^*\star h}_{s,u}\bX^{\rho\btop p}_{u,t}.
  \]
  Therefore
  \[
    |\delta\Xi_{s,u,t}|\le C\fnorm{f}(1+\cnorm{\bX}_\alpha)^{N-2-n}\cnorm{\bX}_\alpha\sum_{\rho\in\mathcal
      P^{\top,<}_{(N-n-|p|)}}|u-s|^{1-\varepsilon}|t-u|^{(|\rho|+|p|)\alpha}.
  \]
  The result then follows from the Sewing Lemma since \(1-\varepsilon+|p|\alpha>(N+|p|)\alpha>(N+1)\alpha>1\).
\end{proof}

We then set for every $h \in \mathcal{F}_n, 0 < n < N-1$
\begin{equation}\label{GammaXlower}
  \Gamma_\bX(f)^h_t\coloneq\sum_{\substack{p\in\mathcal P\\|p|<N-|h|}}\int_0^t\Gamma_\bX(f)^{p^*\star h}_r\dd\bX^p_r + f^h_t.
\end{equation}

\begin{lemma}
  \label{lem:gamma.lin.bound}
  The map \(\Gamma_\bX\colon\mathcal S_N\to\mathscr{D}^\alpha_\bX\) sending $f$ to $\Gamma_\bX(f)$ as in \cref{GammaXtop,GammaXlower} is well-defined, linear and bounded, with
  \begin{equation}\label{estimatefnomrvsbeta}
    \cnorm{\Gamma_\bX(f)}_\alpha\le C\frac{e^{N\cnorm{\bX}_\alpha}-1}{\cnorm{\bX}_\alpha}\fnorm{f}
  \end{equation}
  for all \(f\in\mathcal S_N\).
\end{lemma}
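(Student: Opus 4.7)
The proof is by downward induction on $n=|h|$. We show simultaneously, for each $n\in\{0,\dotsc,N-1\}$ and every $h\in\cF_n$, that $\Gamma_\bX(f)^h$ is well-defined, linear in $f$, and that the remainder $R^h$ in \eqref{Rdefn} satisfies
\[
  |R^h_{s,t}|\le C(1+\cnorm{\bX}_\alpha)^{N-1-n}\fnorm{f}|t-s|^{1-\varepsilon};
\]
since $1-\varepsilon>N\alpha$ this automatically yields $R^h\in C_2^{(N-|h|)\alpha}$ and so confirms $\Gamma_\bX(f)\in\D{\alpha}{\bX}$. For the base case $n=N-1$, the definition gives $\Gamma_\bX(f)^h=f^h$, the sum in \eqref{Rdefn} is empty, so $R^h=\delta f^h$ and $\|f^h\|_{1-\varepsilon}\le\fnorm{f}$ closes the case.

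For the inductive step, fix $h\in\cF_n$ with $n<N-1$ and assume the assertion at all strictly larger degrees. Only finitely many primitives have $|p|<N-n$, and each integrand $\Gamma_\bX(f)^{p^*\star h}$ sits at total degree $|p|+n>n$, hence is covered by the inductive hypothesis. Applying \Cref{lem:gamma.int}, with $M\coloneq\cnorm{\bX}_\alpha$, yields both the existence of the integral and the error bound
\[
  \left\lvert\int_s^t\Gamma_\bX(f)^{p^*\star h}_r\dd\bX^p_r-\sum_{\rho\in\cP^{\top,<}_{(N-n-|p|)}}\Gamma_\bX(f)^{\rho^*\star p^*\star h}_s\bX^{\rho\btop p}_{s,t}\right\rvert\le C\fnorm{f}(1+M)^{N-2-n}M|t-s|^{1-\varepsilon+|p|\alpha}.
\]
Linearity of $f\mapsto\Gamma_\bX(f)^h$ passes through integrals and finite sums without issue.

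The core of the step is the algebraic matching. Using the alternative formulation from \Cref{numba:brp}, the subtracted sum in \eqref{Rdefn} rewrites in the $\cP^\top$-basis as $\sum_{\rho'\in\cP^{\top,<}_{(N-n)}}\Gamma_\bX(f)^{\rho'^*\star h}_s\bX^{\rho'}_{s,t}$. By \Cref{lem:foissy}, every such $\rho'$ factors uniquely as $\rho\btop p$ with $p\in\cP$ and $\rho\in\cP^\top\cup\{\bm 1\}$, with the convention $\bm 1\btop p=p$ that accounts for the leading Riemann-sum term $\Gamma_\bX(f)^{p^*\star h}_s\bX^p_{s,t}$. Grouping by the last primitive $p$, these contributions cancel exactly the principal part of the expansion provided by \Cref{lem:gamma.int} summed over primitives with $|p|<N-n$, leaving
\[
  R^h_{s,t}=\delta f^h_{s,t}+\sum_{p\in\cP,\,|p|<N-n}E^{h,p}_{s,t}
\]
where $E^{h,p}$ is the error above. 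Using $|t-s|^{1-\varepsilon+|p|\alpha}\le T^{|p|\alpha}|t-s|^{1-\varepsilon}$, the inequality $M(1+M)^{N-2-n}\le(1+M)^{N-1-n}$, and the finiteness of $\{p\in\cP:|p|<N-n\}$, one reads off the inductive pointwise bound on $R^h$.

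To conclude \eqref{estimatefnomrvsbeta}, sum over $h\in\cF_{(N)}^<$: the initial-value contribution obeys $\sum_h|f^h_0|\lesssim\fnorm{f}$ since $\Gamma_\bX(f)^h_0=f^h_0$ and $\cF_{(N)}^<$ is finite, while after absorbing $|\cF_n|$ and $T$-powers into $C$,
\[
  \sum_h\|R^h\|_{(N-|h|)\alpha}\le C\fnorm{f}\sum_{n=0}^{N-1}(1+M)^{N-1-n}=C\fnorm{f}\,\frac{(1+M)^N-1}{M}\le C\fnorm{f}\,\frac{e^{NM}-1}{M},
\]
where the last step uses $(1+M)^N\le e^{NM}$. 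The main obstacle throughout is the algebraic bookkeeping in the inductive step: correctly aligning the forest-basis expression for the subtracted sum in the controlled-path definition with the natural-growth $\rho\btop p$ expansion produced by \Cref{lem:gamma.int}, in particular identifying the "empty $\rho$" contribution with the leading Riemann term. This alignment is precisely what Foissy's basis theorem \Cref{lem:foissy} furnishes.
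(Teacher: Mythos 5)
Your proof is correct and follows essentially the same route as the paper: downward induction on the forest degree, the local expansion from \Cref{lem:gamma.int}, and the regrouping of the \(\cP^\top\)-indexed sum by its last primitive factor (via Foissy's basis) so that it cancels against the Riemann-sum part of each rough integral. You are in fact more explicit than the paper about the final summation step producing the factor \((e^{N\cnorm{\bX}_\alpha}-1)/\cnorm{\bX}_\alpha\), which the paper leaves implicit in ``the required bound holds after summation over \(h\in\cF_{(N)}^<\)''.
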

\begin{proof}
  The proof is by induction.
  Linearity clearly holds when \(|h|=N-1\). Indeed,
  \[
    \Gamma_\bX(f+\lambda g)^h_t=(f+\lambda g)^h_t=f_t^h+\lambda g^h_t=\Gamma_\bX(f)^h_t+\lambda\Gamma_\bX(g)^h_t.
  \]

  Now pick \(h\in\cF_{n}\), and suppose that linearity holds for all forests \(\bar h\) with \(|\bar h|>n\).  Then
  \begin{align*}
    \Gamma_\bX(f+\lambda g)^h_t&= \sum_{\substack{p\in\mathcal P\\|p|<N-n}}\int_0^t\Gamma_\bX(f+\lambda g)^{p^*\star h}_r\,\mathrm
    d\bX^p_r+(f+\lambda g)_t^h\\
    &= \sum_{\substack{p\in\mathcal P\\|p|<N-n}}\int_0^t\left\{ \Gamma_\bX(f)^{p^*\star h}_r +\lambda\Gamma_\bX(g)^{p^*\star
    h}_r\right\}\,\mathrm d\bX_r^p+f^h_t+\lambda g^h_t\\
    &= \Gamma_\bX(f)^h_t+\lambda\Gamma_\bX(g)^h_t.
  \end{align*}

  From the previous sections, we have that \(\|R^h\|_{\alpha}\le C\fnorm{f}T^{1-\varepsilon-\alpha}\) for all \(h\in\cF_{N-1}\).
  Take \(h\in\cF_n\), and note that
  \begin{align*}
    R^h_{s,t}&= \delta\Gamma_\bX(f)^h_{s,t}-\sum_{\bar h\in\cP^\top_{(N-n-1)}}\Gamma_\bX(f)^{\rho^*\star h}_s\bX^{\rho}_{s,t}\\
    &= \sum_{p\in\mathcal P_{(N-n-1)}}\int_s^t\Gamma_\bX(f)^{p^*\star h}_u\,\mathrm d\bX^p_u
      - \sum_{\rho\in\cP^\top_{(N-n-1)}}\Gamma_\bX(f)^{\rho^*\star h}_s\bX^{\rho}_{s,t}+\delta f^h_{s,t}\\
    &= \sum_{p\in\mathcal P_{(N-n-1)}}\left( \int_s^t\Gamma_\bX(f)^{p^*\star h}_u\,\mathrm d\bX^p_u 
      - \sum_{\rho\in\cP^\top_{(N-n-|p|)}}\Gamma_{\bX}(f)^{\rho^*\star p^*\star h}\bX_{s,t}^{\rho\btop p} \right) +
      \delta f^h_{s,t}.
  \end{align*}
  Therefore, by \Cref{lem:gamma.int} we see that
  \begin{equation*}
    |R^h_{s,t}|\le
    C\fnorm{f}(1+\cnorm{\bX}_\alpha)^{N-2-|h|}\cnorm{\bX}_\alpha\sum_{p\in\cP_{(N-n-1)}}|t-s|^{1-\varepsilon+|p|\alpha}+\|f^h\|_{1-\varepsilon}|t-s|^{1-\varepsilon},
  \end{equation*}
  so that
  \begin{equation}\label{shorttermestimate}
    \|R^h\|_{(N-n)\alpha}\le C_T(1+\cnorm{\bX}_\alpha)^{N-1-|h|}\fnorm{f}.
  \end{equation}
  since \(1-\varepsilon-(N-n+k)\alpha>(n-k)\alpha>(N-1)\alpha>0\) for all \(k\in\{0,\dotsc,N-n-1\}\).
  In particular, \(R^h\in C_2^{(N-|h|)\alpha}\) for all \(h\in\cF_{(N-1)}\), i.e.,
  \(\Gamma_\bX(f)\in\D{\alpha}{\bX}\), and the required bound holds after summation over \(h\in\cF_{(N)}^<\).
\end{proof}

\begin{theorem}
  \label{thm:lin.rho}
  Let \(\bX,\tilde\bX\in\BRP^\alpha\) such that \(\cnorm{\bX}_\alpha\vee\cnorm{\tilde{\bX}}_\alpha\le M\) and \(f\in\mathcal S_N\). Then,
  \[
    \cnorm{\Gamma_\bX(f);\Gamma_{\tilde\bX}(f)}_\alpha\le C\fnorm{f}\rho_\alpha(\bX,\tilde\bX).
  \]
\end{theorem}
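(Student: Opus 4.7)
The approach is downward induction on $|h|$. First, observe that the initial-value contributions to $\cnorm{\Gamma_\bX(f);\Gamma_{\tilde\bX}(f)}_\alpha$ all vanish: for $|h|=N-1$, \cref{GammaXtop} gives $\Gamma_\bX(f)^h_0=f^h_0=\Gamma_{\tilde\bX}(f)^h_0$, while for $|h|<N-1$ the rough integrals in \cref{GammaXlower} vanish at $t=0$, so again both quantities reduce to $f^h_0$. Consequently, it suffices to show
\[
\|R^h-\tilde R^h\|_{(N-|h|)\alpha}\le C\fnorm{f}\rho_\alpha(\bX,\tilde\bX)\quad\text{for every }h\in\cF^<_{(N)},
\]
with $C=C(\alpha,N,T,\varepsilon,M)$.

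The base case $|h|=N-1$ is immediate: since $\cF^+_{(0)}=\emptyset$, formula \cref{Rdefn} reduces to $R^h_{s,t}=\delta f^h_{s,t}$, which does not depend on $\bX$, hence $R^h=\tilde R^h$. For the inductive step, fix $h\in\cF_n$ with $n<N-1$ and assume the claim for all $\bar h$ with $|\bar h|>n$. Following the rearrangement used in the proof of \Cref{lem:gamma.lin.bound},
\[
R^h_{s,t}=\sum_{\substack{p\in\cP\\|p|<N-n}}\left(\int_s^t\Gamma_\bX(f)^{p^*\star h}_u\dd\bX^p_u-\Xi^p_{s,t}\right)+\delta f^h_{s,t},
\]
where $\Xi^p_{s,t}\coloneq\sum_{\rho\in\cP^{\top,<}_{(N-n-|p|)}}\Gamma_\bX(f)^{\rho^*\star p^*\star h}_s\bX^{\rho\btop p}_{s,t}$, and analogously for $\tilde R^h$ with $\tilde\Xi^p$. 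The $\delta f^h$ pieces cancel, so $R^h-\tilde R^h$ is the sum over primitive $p$ of the sewing-map differences $\mathcal S(\Xi^p)-\mathcal S(\tilde\Xi^p)$.

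By the Sewing Lemma, estimating each summand reduces to a bound on the three-increment
\[
\delta(\Xi^p-\tilde\Xi^p)_{s,u,t}=-\sum_{\rho}\left(R^{\rho^*\star p^*\star h}_{s,u}\bX^{\rho\btop p}_{u,t}-\tilde R^{\rho^*\star p^*\star h}_{s,u}\tilde\bX^{\rho\btop p}_{u,t}\right),
\]
which is the coproduct identity derived in the proof of \Cref{lem:gamma.int}. Adding and subtracting $\tilde R^{\rho^*\star p^*\star h}_{s,u}\bX^{\rho\btop p}_{u,t}$ splits each summand into two parts: one is controlled by the inductive hypothesis applied to the higher level $|\rho^*\star p^*\star h|=|\rho|+|p|+n>n$; the other is controlled by $\rho_\alpha(\bX,\tilde\bX)$ together with the uniform bound on $\tilde R^{\rho^*\star p^*\star h}$ supplied by \Cref{lem:gamma.lin.bound}. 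The Sewing Lemma's threshold $1-\varepsilon+|p|\alpha>(N+1)\alpha>1$ holds exactly as in \Cref{lem:gamma.int}, and all remaining constants can be absorbed using the uniform bound $M$ on the rough-path norms.

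The main technical point is setting up the recursion so that the Sewing Lemma can be invoked on the germ differences $\Xi^p-\tilde\Xi^p$ with Hölder exponents matching those of \Cref{lem:gamma.int,lem:gamma.lin.bound}; once this is in hand, the estimate mirrors the linear bound of \Cref{lem:gamma.lin.bound} step by step. Summing over $h\in\cF^<_{(N)}$ closes the induction and yields the claimed inequality.
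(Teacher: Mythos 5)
Your proposal is correct and follows essentially the same route as the paper: the paper likewise introduces the sewing-residue differences $Q^{h,p}-\tilde Q^{h,p}$, bounds them inductively via the Sewing Lemma as in \Cref{lem:gamma.int}, writes $R^h-\tilde R^h$ as their sum over primitives $p$ (the $\delta f^h$ terms cancelling), and sums over $h\in\cF^<_{(N)}$. Your explicit treatment of the base case and of the vanishing initial-value contributions simply fills in details the paper leaves implicit.
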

\begin{proof}
  For \(h\in\cF_{(N-1)}\) and \(p\in\cP\), let us define
  \[
    Q^{h,p}_{s,t}\coloneqq\int_s^t\Gamma_{\bX}(f)^{p^*\star h}_u\,\mathrm d\bX^p_u-\sum_{\rho\in\cP^{\top,<}_{(N-|h|-|p|)}}\Gamma_\bX(f)^{\rho^*\star p^*\star h}_s\bX^{\rho\top p}_{s,t}.
  \]
  Using the Sewing Lemma and proceeding as in the proof of \Cref{lem:gamma.int} one can inductively show that
  \[
    |Q^{h,p}_{s,t}-\tilde Q^{h,p}_{s,t}|\le C\fnorm{f}\rho_\alpha(\bX,\tilde{\bX})|t-s|^{1-\varepsilon+|p|\alpha}.
  \]
  Then, arguing as in the proof of \Cref{lem:gamma.lin.bound}, we see that
  \[
    |R^h_{s,t}-\tilde{R}^h_{s,t}|\le C\sum_{p\in\cP_{(N-|h|-1)}}|Q^{h,p}_{s,t}-\tilde{Q}^{h,p}_{s,t}|,
  \]
  and the required bound is obtained after summation over \(h\in\cF_{(N)}^<\).
\end{proof}

In order the make the presentation more amenable to the reader, the construction of our approximation will be split in two parts.
First we show how to approximate a given controlled path \(\bZ\in\mathscr{D}^\alpha_{\bX}\) in the \(\cnorm{\cdot}_\beta\) norm over a small interval \([0,T]\), for any \(\beta<\alpha\).
Then, we extend this approximation to an arbitrary interval by patching together each of the individual approximations.

\subsection{Short-time approximations}\label{sec:short-time}

Given \(\bZ\in\D{\alpha}\bX\), we start by defining a controlled path of the form \(\Gamma_\bX(f)\colon[0,T]\to\mathcal F_{(N-1)}\)
on a given time interval $[0,T]$ such that \(\cnorm{\bZ;\Gamma_\bX(f)}_\beta\) is arbitrarily small for small $T > 0$ and any
\(\beta<\alpha\).

We do this by a backward procedure: The highest order functions are defined as affine functions. That is, for every \(h\in\mathcal F_{N-1}\), we set
\begin{align}\label{eqn:plinear}
  f^h_t = Z^h_0 + \frac tT(Z^h_T-Z^h_0),\quad t\in[0,T].
\end{align}
The idea is to define all lower order terms by integration against the rough path.

Assume that we have defined \(f^h\) for all \(h\in\cF\) with \(n<|h|\le N-1\).
Given \(h\in\cF_{n}\) we set
\begin{equation}
  f^h_t=Z^h_0+\frac{t}{T}\left[ \delta Z^h_{0,T}-\sum_{p\in\cP_{(N-|h|-1)}}\int_0^T\Gamma_\bX(f)^{p^*\star
  h}_u\,\mathrm d\bX^p_u \right].
  \label{defn:rec:path}
\end{equation}
where the integral is defined by \Cref{lem:gamma.int}.
Note that by construction we have 
\begin{align}\label{degreewiseestimate}
  \|f^h\|_{1-\varepsilon} \leq C\|R^h\|_{(N-|h|)\alpha}T^{\alpha (N-|h|)+\varepsilon-1}
\end{align}
for some $C>0$.

The controlled path $\Gamma_\bX(f)$ so constructed can serve as a first-order approximation to a given controlled path $\mathbf{Z}$. This is summarized in the next lemma.

\begin{lemma}\label{lemma:affine_approximation}
  Let $\mathbf{Z}$ be a controlled path. Define $\tilde\bZ\coloneq\Gamma_\bX(f)$ with $f$ as in \cref{eqn:plinear,defn:rec:path}   and $\tilde{R}^h$ as in \eqref{Rdefn}.
  Then $\tilde{\mathbf{Z}}$ has the following properties:
    \begin{itemize}
        \item[(i)] $\mathbf{Z}_0 = \tilde{\mathbf{Z}}_0$ and $\mathbf{Z}_T = \tilde{\mathbf{Z}}_T$.
        \item[(ii)] $\tilde{R}^h_{0,T} = R^h_{0,T}$ for every $h \in \mathcal{F}_{(N-1)}$.
        \item[(iii)] For every $\beta < \alpha$, there is a constant $C > 0$ such that
        \begin{align*}
            \|R^h - \tilde{R}^h \|_{(N - |h|) \beta} \leq CT^{(N - |h|)(\alpha - \beta)}.
        \end{align*}
        In particular,
 \begin{equation*}
  \cnorm{\bZ;\tilde\bZ}_\beta\le C\frac{T^{N(\alpha-\beta)}-T^{\alpha-\beta}}{T^{\alpha-\beta}-1}.
\end{equation*}
    \end{itemize}
\end{lemma}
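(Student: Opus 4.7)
The plan is to prove (i) and (ii) directly from the recursive construction, then establish (iii) by downward induction on $|h|$ combined with sewing-type bounds.

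For (i), observe that $\tilde Z^h_0 = f^h_0 = Z^h_0$ for every $h\in\cF_{(N-1)}$: this is immediate from \eqref{eqn:plinear} when $|h|=N-1$, and follows from \eqref{GammaXlower} for $|h|<N-1$ because all integrals there vanish at $t=0$. For $t=T$ one argues again by downward induction on $|h|$: the case $|h|=N-1$ is \eqref{eqn:plinear}, while for $|h|<N-1$ the definition \eqref{defn:rec:path} is engineered precisely so that the affine slope of $f^h$ cancels the integral in \eqref{GammaXlower}, yielding $\tilde Z^h_T = Z^h_T$. Statement (ii) then follows by writing out the remainder identities at $(s,t)=(0,T)$ and subtracting: the difference equals $\delta(Z^h-\tilde Z^h)_{0,T}-\sum_{\bar h\in\cF^+_{(N-|h|-1)}}\langle \bar h\star h,\bZ_0-\tilde\bZ_0\rangle\bX^{\bar h}_{0,T}$, and both contributions vanish by (i).

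For (iii), I would proceed by downward induction on $|h|$. The base case $|h|=N-1$ is direct: since $\cF^+_{(0)}$ is empty, $R^h_{s,t}=\delta Z^h_{s,t}$ and $\tilde R^h_{s,t}=\frac{t-s}{T}\delta Z^h_{0,T}$ is linear in $(t-s)$. The elementary inequality $(t-s)T^{\alpha-1}\le (t-s)^\alpha$, valid for $t-s\le T$, then yields $\|R^h-\tilde R^h\|_\alpha\le 2\|Z^h\|_\alpha$ uniformly in $T$, and the trivial scaling $\|g\|_\beta\le T^{\alpha-\beta}\|g\|_\alpha$ on $[0,T]$ closes this case. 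For the inductive step $|h|=n<N-1$, I would exploit the fact that $\bZ-\tilde\bZ$ is itself a controlled path above $\bX$ (by linearity of $\D{\alpha}{\bX}$) with $(\bZ-\tilde\bZ)_0=0$ from (i); telescoping \eqref{eq:ctrlh} from $0$ yields $(Z^{h'}-\tilde Z^{h'})_t=(R^{h'}-\tilde R^{h'})_{0,t}$ for every $h'\in\cF^<_{(N)}$, and the inductive hypothesis for $|h'|>n$ produces $\|Z^{h'}-\tilde Z^{h'}\|_\infty\le C T^{(N-|h'|)\alpha}$. Feeding these into
\[
  R^h_{s,t}-\tilde R^h_{s,t}=\delta(Z^h-\tilde Z^h)_{s,t}-\sum_{\bar h\in\cF^+_{(N-n-1)}}\langle \bar h\star h,\bZ_s-\tilde\bZ_s\rangle\bX^{\bar h}_{s,t}
\]
and controlling the first term via the sewing lemma applied to the recursive expression of $\tilde Z^h$, in the spirit of \Cref{lem:gamma.int} and \eqref{shorttermestimate}, produces the target bound $\|R^h-\tilde R^h\|_{(N-n)\beta}\le C T^{(N-n)(\alpha-\beta)}$.

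The closed-form estimate on $\cnorm{\bZ;\tilde\bZ}_\beta$ then follows by summation over $h\in\cF^<_{(N)}$: the endpoint terms $|\langle h,\bZ_0-\tilde\bZ_0\rangle|$ vanish by (i), the finite combinatorial factors $|\cF_k|$ are absorbed into a single constant, and the resulting geometric progression in $T^{\alpha-\beta}$ has the stated closed form. I expect the main obstacle to be the inductive step of (iii): careful tracking of $T$-powers is essential to land exactly on the exponent $(N-n)(\alpha-\beta)$, balancing positive powers produced by short-time sewing against the potentially negative exponents that appear through \eqref{degreewiseestimate} when $\fnorm{f}$ is expressed in terms of the remainders of the original $\bZ$.
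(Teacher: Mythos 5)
Your treatment of (i) and (ii) is correct and coincides with the paper's: (i) is read off from the construction (the integrals in \eqref{GammaXlower} vanish at $t=0$, and the affine slope in \eqref{defn:rec:path} is chosen precisely so that $\tilde Z^h_T=Z^h_T$), and (ii) follows by subtracting the two remainder identities at $(0,T)$ and invoking (i).

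The inductive scheme you propose for (iii), however, contains a circular step. The identity
\[
R^h_{s,t}-\tilde R^h_{s,t}=\delta(Z^h-\tilde Z^h)_{s,t}-\sum_{\bar h\in\cF^+_{(N-|h|-1)}}\langle \bar h\star h,\bZ_s-\tilde\bZ_s\rangle\bX^{\bar h}_{s,t}
\]
is just the definition of the two remainders rearranged: by controlledness, $\delta(Z^h-\tilde Z^h)_{s,t}$ equals that very sum plus $R^h_{s,t}-\tilde R^h_{s,t}$ itself. So ``feeding in'' the sup-norm bounds on $Z^{h'}-\tilde Z^{h'}$ disposes only of the sum and leaves you needing an independent bound on $\delta(Z^h-\tilde Z^h)_{s,t}$ of exactly the quality you are trying to establish. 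The way to break the circle is the route the paper takes, and it renders the induction (and any fresh sewing argument) unnecessary: bound $\|R^h\|_{(N-|h|)\alpha}$ by $\cnorm{\bZ}_\alpha$, bound $\|\tilde R^h\|_{(N-|h|)\alpha}$ by \eqref{shorttermestimate} combined with \eqref{degreewiseestimate} (which converts $\fnorm{f}$ into $\cnorm{\bZ}_\alpha$ times an explicit power of $T$), and then conclude by the triangle inequality together with the elementary scaling $\|g\|_{(N-|h|)\beta}\le T^{(N-|h|)(\alpha-\beta)}\|g\|_{(N-|h|)\alpha}$ on $[0,T]$. Your base case $|h|=N-1$ is in fact a correct, self-contained instance of precisely this two-line scaling argument; the same argument works for every $h$ once the uniform-in-$T$ control of $\|\tilde R^h\|_{(N-|h|)\alpha}$ is in hand. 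The final summation producing the geometric progression is fine.
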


\begin{proof}
    (i) follows by definition. Next,
    \begin{equation}
\label{eq:tildeR}
\begin{aligned}
  \tilde R^h_{0,T}&=  \delta\tilde Z^h_{0,T}-\sum_{\bar h\in\cF_{(N-|h|-1)}^+}\tilde Z^{\bar h\star h}_0\bX^{\bar h}_{0,T}\\
  &= \delta Z^h_{0,T} - \sum_{\bar h\in\cP_{(N-|h|-1)}^+} Z^{\bar h\star h}_0\bX^{\bar h}_{0,T} \\
  &= R^h_{0,T}
\end{aligned}
\end{equation}
and property (ii) is shown. Finally, in view of \eqref{degreewiseestimate} we obtain an estimate for $\fnorm{f}$ in terms of $\cnorm{\bZ}$. Thus property (iii) follows from the estimate \eqref{shorttermestimate} and the triangle inequality.

\end{proof}

\subsection{From local to global}\label{sec:local-global}
Having constructed affine controlled paths on a given interval, we will glue them together now. Fix a dissection \(\pi=\{0=t_0<t_1<\dotsb<t_N<t_{N+1}=T\}\) of \([0,T]\) with mesh \(\theta\coloneq\max_k|t_{k+1}-t_k|\). Set $I_k := [t_k,t_{k+1}]$. Assume that we are given controlled paths
\begin{align*}
    \tilde{\mathbf{Z}}^k \colon I_k \to \cH
\end{align*}
for which $\tilde{\mathbf{Z}}^k_{t_{k+1}} = \tilde{\mathbf{Z}}^{k+1}_{t_{k+1}}$ and with remainders satisfying $|\tilde{R}^{k;h}_{s,t}| \leq C |t-s| (t_{k+1} - t_k)^{(N - |h|)\alpha - 1}$ for every $s,t \in [t_k,t_{k+1}]$. We define $ \tilde{\mathbf{Z}} \colon [0,T] \to \cH$ by
\begin{align}\label{eqn:global_controlled_path}
  \tilde\bZ_t\coloneq\sum_{k=0}^N\tilde\bZ_t^k\mathbf 1_{I_k}(t)
\end{align}
and set
\begin{align*}
  \tilde{R}^h_{s,t} := \delta\tilde Z^h_{s,t} - \sum_{\bar h\in\cF_{(N-|h|-1)}^+}\tilde Z^{\bar h\star h}_s\bX^{\bar h}_{s,t}.
\end{align*}

\begin{lemma}\label{lemma:gubinelli83}
  For all \(h\in\mathcal F_{(N-1)}\) we have
  \[
    \delta \tilde{R}^h_{s,u,t} = \sum_{\bar h\in\cF_{(N-|h|-1)}^+} \tilde{R} _{s,u}^{\bar h\star h}\bX^{\bar h}_{u,t}.
  \]
  If \(\bZ\in\mathscr{D}^\alpha_{\bX}\), the same identity holds with $\tilde{R}$ replaced by the remainder $R$ of $\bZ$.
\end{lemma}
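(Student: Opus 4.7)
The plan is to observe that the identity is purely algebraic: it relies only on Chen's identity for $\bX$ and the defining formula of the remainder. In particular, neither the piecewise construction of $\tilde\bZ$ nor any controlledness estimate is needed, so the argument for $\tilde R^h$ will apply verbatim to $R^h$ for $\bZ\in\D\alpha\bX$.

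First I would rewrite the remainder in the compact form
\[
\tilde R^h_{s,t}=\tilde Z^h_t-\langle\bX_{s,t}\star h,\tilde\bZ_s\rangle.
\]
This is obtained by absorbing the $-\tilde Z^h_s$ term into the sum, using $\bm 1\star h=h$ together with $\bX^{\bm 1}_{s,t}=1$, and recognising the resulting (finite, since $\tilde\bZ_s\in\cH_{(N-1)}$) series as the convolution action of $\bX_{s,t}\in\cH^*$ on $\tilde\bZ_s$. By linear extension the analogous formula holds with $h$ replaced by any $k\in\cH$; in particular
\[
\tilde R^{\bar h\star h}_{s,u}=\langle\bar h\star h,\tilde\bZ_u\rangle-\langle\bX_{s,u}\star\bar h\star h,\tilde\bZ_s\rangle\qquad\text{for }\bar h\in\cF^+.
\]

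Next I would expand $\delta\tilde R^h_{s,u,t}=\tilde R^h_{s,t}-\tilde R^h_{s,u}-\tilde R^h_{u,t}$ directly using this form. The two $\tilde Z^h_t$ contributions cancel and one is left with
\[
\delta\tilde R^h_{s,u,t}=-\tilde Z^h_u+\langle\bX_{s,u}\star h,\tilde\bZ_s\rangle-\langle\bX_{s,t}\star h,\tilde\bZ_s\rangle+\langle\bX_{u,t}\star h,\tilde\bZ_u\rangle.
\]
Applying Chen's identity $\bX_{s,t}=\bX_{s,u}\star\bX_{u,t}$ (cf.\ \Cref{eq:chencop}) together with the formal expansion $\bX_{u,t}\star h=h+\sum_{\bar h\in\cF^+}\bX^{\bar h}_{u,t}(\bar h\star h)$, which is finite modulo the truncation imposed by $\tilde\bZ\in\cH_{(N-1)}$, the ``$\bar h=\bm 1$'' portions of these two expansions exactly cancel the $\langle\bX_{s,u}\star h,\tilde\bZ_s\rangle$ and $-\tilde Z^h_u$ terms respectively.

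Collecting the surviving pieces by coefficients in $\bX^{\bar h}_{u,t}$ then gives
\[
\delta\tilde R^h_{s,u,t}=\sum_{\bar h\in\cF^+_{(N-|h|-1)}}\bigl[\tilde Z^{\bar h\star h}_u-\langle\bX_{s,u}\star\bar h\star h,\tilde\bZ_s\rangle\bigr]\bX^{\bar h}_{u,t},
\]
and the bracketed quantity is precisely $\tilde R^{\bar h\star h}_{s,u}$ by the compact form. The truncation of the index set to $\cF^+_{(N-|h|-1)}$ is automatic, since $\tilde Z^{\bar h\star h}\equiv 0$ whenever $|\bar h|+|h|\ge N$. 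The main thing to watch is the bookkeeping of the several nested sums and repeated use of the associativity of $\star$; I do not anticipate any deeper obstacle.
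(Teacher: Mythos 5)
Your proof is correct and is essentially the paper's argument in dual form: where the paper expands \(\delta\tilde R^h\) via the reduced coproduct and cancels the two triple-coproduct terms by coassociativity of \(\Delta'\), you expand via the convolution action \(\bX_{s,t}\star h\) and cancel using Chen's identity \(\bX_{s,t}=\bX_{s,u}\star\bX_{u,t}\) together with associativity of \(\star\) --- the same algebraic mechanism, with the truncation of the index set handled identically. No gap.
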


\begin{proof}
  By definition we have that
  \[
    \tilde R^h_{s,t}=\langle h, \delta\tilde\bZ_{s,t}\rangle - \langle\bX_{s,t}\otimes h,\Delta'\tilde\bZ_s\rangle
  \]
  Applying \(\delta\) to both sides of this equation and recalling that if \(f\in C_1\), \(g\in C_2\) and \(F_{s,t}\coloneq f_sg_{s,t}\in C_2\) then
  \[
    \delta F_{s,u,t}=f_s\delta g_{s,u,t}-\delta f_{s,u}g_{u,t}
  \]
  we see that
  \[
    \delta\tilde R^h_{s,u,t}=-\langle\bX_{s,u}\otimes\bX_{u,t}\otimes h,(\Delta'\otimes\id)\Delta'\tilde\bZ_s\rangle + \langle\bX_{u,t}\otimes h,\Delta'\delta\tilde\bZ_{s,u}\rangle
  \]
  Now, we have that for any \(f\in\cF_{(N-|h|-1)}\),
  \[
    \langle f\otimes h,\Delta'\delta\tilde\bZ_{s,u}\rangle=\sum_{\bar h\in\cF^0}\langle f\otimes h,\Delta'\bar h\rangle R_{s,u}^{\bar h} + \langle\bX_{s,u}\otimes f\otimes h,(\id\otimes\Delta')\Delta'\tilde\bZ_s\rangle
  \]
  and therefore
  \begin{align*}
    \delta\tilde R^h_{s,u,t}&= -\langle\bX_{s,u}\otimes\bX_{u,t}\otimes h,(\Delta'\otimes\id)\Delta'\tilde\bZ_s\rangle+ \langle\bX_{s,u}\otimes\bX_{u,t}\otimes h,(\id\otimes\Delta')\Delta'\tilde\bZ_s\rangle+\sum_{\bar h\in\cF^0}\langle \bX_{u,t}\otimes h,\Delta'\bar h\rangle\tilde R_{s,u}^{\bar h}\\
    &=  \sum_{\bar h\in\cF^0}\langle\bX_{u,t}\otimes h,\Delta'\bar h\rangle\tilde R_{s,u}^{\bar h}
  \end{align*}
  where the fist two terms cancel by coassociativity of \(\Delta'\).

  Finally, we have
  \begin{align*}
    \delta\tilde R^h_{s,u,t}&= \sum_{\bar h\in\cF_{(N-|h|-1)}^+}\sum_{f\in\cF^+_{(N-|h|-1)}}\langle f\otimes h,\Delta'\bar h\rangle\tilde R_{s,u}^{\bar h}\bX_{u,t}^f\\
    &= \sum_{f\in\cF^+_{(N-|h|-1)}}\tilde R_{s,u}^{f\star h}\bX_{s,u}^f\qedhere
  \end{align*}
\end{proof}

\begin{prop}\label{prop:constr_piecewise_linear_controlled}
    The path $\tilde \bZ$ defined in \eqref{eqn:global_controlled_path} is an $\alpha$-controlled path.
\end{prop}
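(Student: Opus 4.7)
The plan is to verify that the remainder $\tilde R^h$ lies in $C_2^{(N-|h|)\alpha}$ for every $h\in\cF_{(N-1)}$. When $s,t$ belong to the same subinterval $I_k$ we have $\tilde R^h_{s,t}=\tilde R^{k;h}_{s,t}$, and the hypothesis $|\tilde R^{k;h}_{s,t}|\le C|t-s|(t_{k+1}-t_k)^{(N-|h|)\alpha-1}$ combined with $|t-s|\le t_{k+1}-t_k$ yields the required bound directly. The real work is to bridge across several subintervals, which I would handle by reverse induction on $|h|$, from $|h|=N-1$ down to $|h|=0$; this order is forced by \Cref{lemma:gubinelli83}, since $\delta\tilde R^h$ couples only to remainders $\tilde R^{\bar h\star h}$ with $|\bar h\star h|=|\bar h|+|h|>|h|$.

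For the base case $|h|=N-1$ the Chen-type sum in \Cref{lemma:gubinelli83} is empty, so $\tilde R^h_{s,t}=\delta\tilde Z^h_{s,t}$ is merely an increment. The gluing condition $\tilde\bZ^k_{t_{k+1}}=\tilde\bZ^{k+1}_{t_{k+1}}$ makes $\tilde Z^h$ continuous on $[0,T]$, and on each piece it is $\alpha$-Hölder with constant bounded by the hypothesis. Telescoping across the dissection points of $\pi$ lying strictly between $s$ and $t$ and applying the elementary inequality $\sum a_i^\alpha\le n^{1-\alpha}(\sum a_i)^\alpha$ yields $|\tilde R^h_{s,t}|\le C_\pi|t-s|^\alpha$.

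For the inductive step, fix $|h|<N-1$ and assume the desired bound for all $g$ with $|g|>|h|$. The induction hypothesis combined with $|\bX^{\bar h}_{u,t}|\le\cnorm{\bX}_\alpha|t-u|^{|\bar h|\alpha}$ gives, term by term in \Cref{lemma:gubinelli83} and bounding $|u-s|,|t-u|\le|t-s|$, the pointwise estimate $|\delta\tilde R^h_{s,u,t}|\le C|t-s|^{(N-|h|)\alpha}$. Next, with $s\in I_j$ and $t\in I_k$, introduce the refinement $s=s_0<s_1<\dotsb<s_n=t$ whose interior points are exactly the dissection points of $\pi$ in $(s,t)$; iterating the cocycle identity $\tilde R^h_{a,c}=\tilde R^h_{a,b}+\tilde R^h_{b,c}+\delta\tilde R^h_{a,b,c}$ produces
\[
  \tilde R^h_{s,t}=\sum_{i=0}^{n-1}\tilde R^h_{s_i,s_{i+1}}+\sum_{i=1}^{n-1}\delta\tilde R^h_{s_0,s_i,s_{i+1}}.
\]
Each within-interval term is controlled by the single-piece hypothesis and each cross term by the estimate on $\delta\tilde R^h$; summing closes the induction.

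The main point to watch is algebraic rather than analytic: $\bar h\star h$ is in general a linear combination of forests, so $\tilde R^{\bar h\star h}$ must be interpreted by linear extension, and the degree-wise induction hypothesis transfers to these combinations precisely because the convolution product is graded. Apart from this bookkeeping, the constants accumulated depend on the fixed dissection $\pi$ (in particular on $n$ and on the minimal mesh), but this is harmless here since we only need $\tilde\bZ\in\D{\alpha}{\bX}$; uniform-in-$\pi$ bounds are not claimed at this stage.
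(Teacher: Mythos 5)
Your proof is correct and takes essentially the same route as the paper's: both reduce everything to \Cref{lemma:gubinelli83}, use the within-interval hypothesis for the base estimates, and chain across the dissection points of $\pi$ to handle $s,t$ in different subintervals. Your version merely makes explicit the reverse induction on the forest degree $|h|$ (needed because $\delta\tilde R^h$ couples to higher-degree remainders) that the paper's ``induction over the length of $|t-s|$'' leaves implicit.
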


\begin{proof}
 We have to show that $\| \tilde{R}^h \|_{(N - |h|)\alpha} < \infty$ for every $h \in \mathcal{F}_{(N-1)}$. Fix $s < t$ with $s \in [t_k,t_{k+1}]$ and $t \in [t_l,t_{l+1}]$. Using \Cref{lemma:gubinelli83} twice shows that
 \begin{align*}
   \tilde{R}^h_{s,t} = \tilde{R}^{k;h}_{s,t_{k+1}} + \tilde{R}^h_{t_{k+1}, t_l} + \tilde{R}^{l;h}_{t_l, t} + \sum_{\bar\in\cF_{(N-|h|-1)}^+} \tilde{R} _{s,t_{k+1}}^{k; \bar h\star h}\bX^{\bar h}_{t_{k+1},t} + \sum_{\bar h\in\cF_{(N-|h|-1)}^+} \tilde{R} _{t_{k+1},t_l}^{\bar h\star h}\bX^{\bar h}_{t_l,t}.
 \end{align*}
 By the triangle inequality, it suffices to show that $\tilde{R}^h_{s,t} \leq C|t-s|^{(N - |h|)\alpha}$ for every $s,t \in \pi$. This follows by induction over the length of $|t-s|$ using again \Cref{lemma:gubinelli83} for the induction step.
\end{proof}

\begin{prop}\label{prop:approximation_dyadic_controlled}
    Let $\bZ\in\mathscr{D}^\alpha_{\bX}$. Fix a dissection \(\pi_N=\{0=t_0<t_1<\dotsb<t_N<t_{N+1}=T\}\) of \([0,T]\) with mesh \(\theta\coloneq\max_k|t_{k+1}-t_k|\). On every interval $I_k = [t_k,t_{k+1}]$, we define $\tilde{\mathbf{Z}}^k \colon I_k \to \cH$ as in \Cref{lemma:affine_approximation} and $\tilde{\mathbf{Z}}$ as in \eqref{eqn:global_controlled_path}. Then
    \begin{align*}
        \cnorm{\bZ;\tilde\bZ}_\beta \to 0 \quad \text{as} \quad \theta \to 0
    \end{align*}
    for every $\beta < \alpha$.
\end{prop}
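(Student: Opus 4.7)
The plan is to consider the difference $\bY := \bZ - \tilde\bZ$. By linearity $\bY \in \D\alpha\bX$, with remainders $\mathcal R^h := R^h - \tilde R^h$. Property~(i) of \Cref{lemma:affine_approximation}, together with the gluing \eqref{eqn:global_controlled_path}, immediately yields $\bY_{t_k} = 0$ at every partition point $t_k \in \pi_N$. In particular $\bZ_0 = \tilde\bZ_0$, so the boundary terms in \eqref{norm:controlled} vanish; it suffices to show that $\|\mathcal R^h\|_{(N-|h|)\beta} \to 0$ as $\theta \to 0$ for each $h \in \cF^<_{(N)}$.

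The central idea is to exploit the grid vanishing of $\bY$ in two ways. First, the definition of the remainder gives $\mathcal R^{h'}_{t_i,t_j} = 0$ for every $h' \in \cF_{(N-1)}$ and all grid points $t_i, t_j \in \pi_N$. Inserting this into the Chen-type identity of \Cref{lemma:gubinelli83} yields $\delta\mathcal R^h_{u,v,t} = 0$ whenever $u,v \in \pi_N$, hence the ``grid-sliding'' identity $\mathcal R^h_{u,t} = \mathcal R^h_{v,t}$ for grid points $u \leq v \leq t$. Using this with $u = t_{k+1}$ and $v = t_l$ furnishes, for $s \in I_k$ and $t \in I_l$ with $l > k$, the decomposition
\[
  \mathcal R^h_{s,t} \;=\; \mathcal R^h_{s,t_{k+1}} + \mathcal R^h_{t_l,t} + \delta\mathcal R^h_{s,t_{k+1},t},
\]
in which the first two pieces are each confined to a single sub-interval, and the third expands by \Cref{lemma:gubinelli83} into a sum over $\bar h \in \cF^+_{(N-|h|-1)}$ of products $\mathcal R^{\bar h \star h}_{s,t_{k+1}}\,\bX^{\bar h}_{t_{k+1},t}$ whose first factor is still confined to $I_k$.

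The desired Hölder estimate then follows by case analysis. When $s,t$ lie in a common sub-interval, \Cref{lemma:affine_approximation}(iii) directly gives $|\mathcal R^h_{s,t}|/|t-s|^{(N-|h|)\beta} \le C\theta^{(N-|h|)(\alpha-\beta)}$. In the cross-interval case the two single-sub-interval pieces are controlled in the same way, while the Chen correction is estimated by applying \Cref{lemma:affine_approximation}(iii) (at the appropriate degree) to each $\mathcal R^{\bar h \star h}$ on $I_k$ and using $|\bX^{\bar h}_{t_{k+1},t}| \le \cnorm{\bX}_\alpha|t-s|^{|\bar h|\alpha}$. After dividing by $|t-s|^{(N-|h|)\beta}$ and crudely bounding $|t-s| \le T$, every summand carries a strictly positive power of $\theta$ (at least $\alpha - \beta$, since $|\bar h| \ge 1$), so the quotient tends to zero. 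Summing over $h \in \cF^<_{(N)}$ yields $\cnorm{\bZ;\tilde\bZ}_\beta \to 0$.

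The main obstacle is choosing the right decomposition: a naive telescoping through all intermediate partition points would introduce a sum of order $T/\theta$ Chen corrections, whose crude bounds would lose powers of $\theta$ and wreck the estimate. The grid-sliding identity is exactly what collapses the telescoping to three pieces — two single-sub-interval remainders and one Chen correction — so that only favourable powers of $\theta$ remain in the final bound.
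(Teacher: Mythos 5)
Your proof is correct and follows essentially the same route as the paper's: the same three-term decomposition \(\mathcal R^h_{s,t}=\mathcal R^h_{s,t_{k+1}}+\mathcal R^h_{t_l,t}+\delta\mathcal R^h_{s,t_{k+1},t}\) obtained from \Cref{lemma:gubinelli83}, the same grid-sliding identity collapsing the telescoping, and the same sub-interval bounds from \Cref{lemma:affine_approximation}(iii) leading to a factor \(\theta^{(N-|h|-|\bar h|)(\alpha-\beta)}\) in every term. The only (harmless) deviation is that you deduce \(\mathcal R^{h}_{t_i,t_j}=0\) directly from the vanishing of \(\bY=\bZ-\tilde\bZ\) at all mesh points, whereas the paper reaches the same conclusion by induction from \(\tilde R^h_{t_k,t_{k+1}}=R^h_{t_k,t_{k+1}}\).
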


\begin{proof}
    By construction, \(\tilde\bZ\) satisfies \(\tilde\bZ_{t_k}=\bZ_{t_k}\) and \(\tilde R^h_{t_k,t_{k+1}}=R^h_{t_k,t_{k+1}}\) for all \(k=0,\dotsc,N\).
Moreover, for all \(h\in\mathcal F_{(N)}^+\) the remainders satisfy the bounds
\[
  \sup_{k=0,\dotsc,N}\sup_{s,t\in I_k}\frac{|\tilde R^h_{s,t}|}{|t-s|}\le C\theta^{(N-|h|)\alpha-1},\quad\sup_{k=0,\dotsc,N}\|R^h-\tilde R^h\|_{(N-|h|)\beta;I_k}\le C\theta^{(N-|h|)(\alpha-\beta)}.
\]
\Cref{lemma:gubinelli83} implies that for any pair of contiguous mesh points \(t_k<t_{k+1}\) and any \(t>t_{k+1}\) we have
\[
  \delta R^h_{t_k,t_{k+1},t}=\delta\tilde R^h_{t_k,t_{k+1},t}.
\]
Inductively, this implies that \(\tilde R^h_{t_j,t_k} = R^h_{t_j,t_k}\) for all \(0\le j<k\le N\), and so
\[
  \delta R^h_{t_j,t_k,t}=\delta\tilde R^h_{t_j,t_k,t}
\]
as well.
Consider now any two points \(s<t\in[0,T]\), and suppose that \(s\in I_j\), \(t\in I_k\) for some \(0\le j<k\le N\). Note that
\begin{align*}
  R^h_{s,t} - \tilde R^h_{s,t} &= R^h_{s,t_{j+1}} - \tilde R^h_{s,t_{j+1}} + R^h_{t_{j+1},t} - \tilde R^h_{t_{j+1},t} + \delta R^h_{s,t_{j+1},t} - \delta\tilde R^h_{s,t_{j+1},t}\\
  &= R^h_{s,t_{j+1}} - \tilde R^h_{s,t_{j+1}} + R^h_{t_{k},t} - \tilde R^h_{t_{k},t} + \delta R^h_{s,t_{j+1},t} - \delta\tilde R^h_{s,t_{j+1},t},
\end{align*}
hence, since both \(|t_{j+1}-s|\le|t-s|\) and \(|t-t_k|\le|t-s|\),
\begin{align*}
  |R^h_{s,t} - \tilde R^h_{s,t}|&\le \begin{multlined}[t]C\theta^{(N-|h|)(\alpha-\beta)}|t_{j+1}-s|^{(N-|h|)\beta}+C\theta^{(N-|h|)(\alpha-\beta)}|t-t_k|^{(N-|h|)\beta}\\
  +\sum_{\bar h\in\cF_{(N-|h|-1)}^+}|R^{\bar h\star h}_{s,t_{j+1}}-\tilde R^{\bar h\star h}_{s,t_{j+1}}||\bX^{\bar{h}}_{t_{j+1},t}|\end{multlined}\\
  &\le \begin{multlined}[t]2C\theta^{(N-|h|)(\alpha-\beta)}|t-s|^{(N-|h|)\beta}\\
  +C\sum_{\bar h\in\cF_{(N-|h|-1)}^+}\theta^{(N-|h|-|\bar h|)(\alpha-\beta)}|t_{k+1}-s|^{(N-|h|-|\bar h|)\beta}|t_{j+1}-t|^{|\bar h|\alpha}\end{multlined}\\
  &\le 3C\theta^{(N-|h|)(\alpha-\beta)}|t-s|^{(N-|h|)\beta}.
\end{align*}
Therefore, for all \(h\in\mathcal F_{(N-1)}\) and \(\beta\le\alpha\),
\[
  \|R^h-\tilde R^h\|_{(N-|h|)\beta}\le C\theta^{(N-|h|)(\alpha-\beta)}.
\]
Moreover,
\begin{align*}
  \cnorm{\bZ;\tilde\bZ}_\beta&= \sum_{h\in\cF_{(N)}^+}\|R^h-\tilde R^h\|_{(N-|h|)\beta}\\
  &\le C\sum_{h\in\cF_{(N)}^<}\theta^{(N-|h|)(\alpha-\beta)}\\
  &= C\sum_{k=1}^{N}\theta^{k(\alpha-\beta)}\to 0
\end{align*}
as \(\theta\to 0\).
\end{proof}

\begin{corollary}\label{cor:denseGamma}
  For every \(\bX\in\mathcal C^\alpha\), the linear subspace \(\Gamma_\bX(\mathcal S_N)\) is dense in \(\mathscr D^{\alpha,\beta}_\bX\) under \(\cnorm{\cdot}_\beta\) for \(\beta<\alpha\).
\end{corollary}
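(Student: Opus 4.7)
The plan is to reduce the statement to \Cref{prop:approximation_dyadic_controlled} by showing that the piecewise-glued approximation $\tilde\bZ$ constructed there is itself already of the form $\Gamma_\bX(\hat f)$ for a suitable $\hat f\in\mathcal S_N$, so that no further smoothing step is required. Since $\mathscr{D}^{\alpha,\beta}_\bX$ is by definition the $\cnorm{\cdot}_\beta$-closure of $\mathscr{D}^\alpha_\bX$, it suffices to approximate an arbitrary $\bZ\in\mathscr{D}^\alpha_\bX$ by elements of $\Gamma_\bX(\mathcal S_N)$ in the $\cnorm{\cdot}_\beta$-norm.

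First I would fix $\bZ\in\mathscr{D}^\alpha_\bX$, choose a dissection $\pi=\{0=t_0<\dotsb<t_{N+1}=T\}$ of $[0,T]$ with mesh $\theta$, and form the glued path $\tilde\bZ$ via \eqref{eqn:global_controlled_path}, using the local backward construction \eqref{eqn:plinear}--\eqref{defn:rec:path}. \Cref{prop:approximation_dyadic_controlled} then guarantees $\cnorm{\bZ;\tilde\bZ}_\beta\to 0$ as $\theta\to 0$, so the task reduces to exhibiting $\tilde\bZ$ inside $\Gamma_\bX(\mathcal S_N)$.

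Next, I would invert the recursion \eqref{GammaXlower} by reverse induction on $|h|$: set $\hat f^h\coloneq\tilde Z^h$ for $h\in\cF_{N-1}$, and for $h\in\cF_n$ with $n<N-1$ define
\[
  \hat f^h_t\coloneq\tilde Z^h_t-\sum_{\substack{p\in\cP\\ |p|<N-|h|}}\int_0^t\tilde Z^{p^*\star h}_u\,\mathrm d\bX^p_u,
\]
the integrals being interpreted as in \Cref{lem:gamma.int}. A direct comparison with \eqref{GammaXlower} gives $\Gamma_\bX(\hat f)=\tilde\bZ$ by a straightforward induction in decreasing degree.

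The hard part will be verifying that $\hat f^h\in\FnSp$ for every $h\in\cF_{(N-1)}$. For top degree this is immediate, since $\hat f^h=\tilde Z^h$ is piecewise linear on $\pi$ by \eqref{eqn:plinear}. For lower degree, the key observation is that on each subinterval $I_k$ the identity $\tilde\bZ|_{I_k}=\Gamma_\bX(f^k)|_{I_k}$ collapses the rough-integral tail in the definition of $\hat f^h$, yielding
\[
  \hat f^h_t = f^{k,h}_t-\sum_{p}\int_0^{t_k}\tilde Z^{p^*\star h}_u\,\mathrm d\bX^p_u,\qquad t\in I_k,
\]
which is affine on $I_k$ because the second term is independent of $t$. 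Continuity of $\hat f^h$ across each breakpoint $t_k$ then reduces to the matching property $\tilde\bZ^{k-1}_{t_k}=\bZ_{t_k}=\tilde\bZ^{k}_{t_k}$ from \Cref{lemma:affine_approximation}(i), which by construction precisely cancels the discrepancy between the two one-sided limits. Thus $\hat f^h$ is a continuous piecewise linear function on $[0,T]$, and the inclusion $\mathrm{PL}\subset\FnSp$ recorded in the definition of $\FnSp$ yields $\hat f\in\mathcal S_N$. The remainder of the argument is bookkeeping: letting $\theta\to 0$ and invoking \Cref{prop:approximation_dyadic_controlled} shows that $\Gamma_\bX(\hat f)=\tilde\bZ\to\bZ$ in $\cnorm{\cdot}_\beta$, proving density.
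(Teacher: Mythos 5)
Your proposal is correct and follows the route the paper intends: \Cref{cor:denseGamma} is stated as an immediate consequence of \Cref{prop:approximation_dyadic_controlled}, and the only content to supply is exactly the identification you make explicit, namely that the glued approximation $\tilde\bZ$ equals $\Gamma_\bX(\hat f)$ for the continuous piecewise linear family $\hat f\in\mathcal S_N$ obtained by inverting the recursion \eqref{GammaXlower}, with continuity across breakpoints guaranteed by \Cref{lemma:affine_approximation}(i) and membership in $\FnSp$ by $\mathrm{PL}\subset\FnSp$. This matches the paper's (implicit) argument, and your write-up simply fills in the bookkeeping the authors omit.
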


\begin{theorem}\label{thm:almost:cont:field}
 Let $N = \lfloor 1/ \alpha \rfloor$. Define the following sets of sections
    \begin{align*}
      \Gamma \coloneq\left\{ \bX\mapsto\Gamma_\bX(f):f\in\mathcal S_N \right\} ,\quad \Gamma_0\coloneq\left\{ \bX\mapsto\Gamma_\bX(f):f\in\mathcal S^0_N \right\}     \end{align*}
    Then the following holds:
    \begin{itemize}
        \item[(i)] $\Gamma$ is a linear subspace of $\prod_\bX\mathscr{D}_{\bX}^{\alpha,\beta} $
        \item[(ii)] For every $\mathbf{X}$, $\{\gamma(\mathbf{X})\, :\, \gamma \in \Gamma_0\}$ is a countable dense subset of $\mathscr{D}_{\bX}^{\alpha,\beta} $.
        \item[(iii)] For every $\gamma \in \Gamma$ with $\gamma(\mathbf{X}) = \Gamma_{\mathbf{X}}(f)$, the function $\mathbf{X} \mapsto \cnorm{\gamma(\mathbf{X})}_{\alpha} = \cnorm{\Gamma_{\mathbf{X}} (f)}_{\alpha}$ is continuous.
    \end{itemize}
\end{theorem}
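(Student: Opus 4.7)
My plan is to derive the three claims as essentially immediate consequences of Lemma~\ref{lem:gamma.lin.bound}, Theorem~\ref{thm:lin.rho} and Corollary~\ref{cor:denseGamma}, once a few bookkeeping points about $\mathcal{S}_N$, $\mathcal{S}_N^0$ and the interplay between the norms $\cnorm{\cdot}_\alpha$ and $\cnorm{\cdot}_\beta$ have been taken care of.

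For (i), I will simply note that $\mathcal{S}_N$ is a real vector space (a finite direct sum of Banach spaces), and that Lemma~\ref{lem:gamma.lin.bound} asserts that each $\Gamma_\bX \colon \mathcal{S}_N \to \mathscr{D}_\bX^\alpha$ is linear. Since $\mathscr{D}_\bX^\alpha \subseteq \mathscr{D}_\bX^{\alpha,\beta}$ by definition of the latter as a closure, the section $\bX \mapsto \Gamma_\bX(f)$ belongs to $\prod_\bX \mathscr{D}_\bX^{\alpha,\beta}$, and fibrewise linearity ensures that $\Gamma$ is a linear subspace of the product.

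For (ii), countability is immediate: $\cF_{(N-1)}$ is finite and $\Omega$ is countable, so $\mathcal{S}_N^0 = \bigoplus_{h \in \cF_{(N-1)}} \Omega$ is countable, and hence so is $\{\gamma(\bX) : \gamma \in \Gamma_0\}$. For density, I will fix $\bZ \in \mathscr{D}_\bX^{\alpha,\beta}$ and $\eta > 0$, and use Corollary~\ref{cor:denseGamma} to produce $f \in \mathcal{S}_N$ with $\cnorm{\bZ - \Gamma_\bX(f)}_\beta < \eta/2$. Density of $\Omega$ in $\FnSp$ then yields $g \in \mathcal{S}_N^0$ with $\fnorm{f-g}$ arbitrarily small. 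To translate this into $\cnorm{\cdot}_\beta$-smallness of $\Gamma_\bX(f-g)$ I will combine the trivial Hölder interpolation $\|R^h\|_{(N-|h|)\beta} \le T^{(N-|h|)(\alpha-\beta)}\|R^h\|_{(N-|h|)\alpha}$ (valid for any controlled path in $\mathscr{D}_\bX^\alpha$) with the bound~\eqref{estimatefnomrvsbeta} from Lemma~\ref{lem:gamma.lin.bound}, giving
\[
\cnorm{\Gamma_\bX(f-g)}_\beta \le C_T\, \cnorm{\Gamma_\bX(f-g)}_\alpha \le C_T \cdot \frac{e^{N \cnorm{\bX}_\alpha} - 1}{\cnorm{\bX}_\alpha}\, \fnorm{f-g},
\]
which can be made smaller than $\eta/2$ by choosing $g$ close enough to $f$. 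A final triangle inequality in $\cnorm{\cdot}_\beta$ completes the argument.

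For (iii), I will fix $f \in \mathcal{S}_N$ and work in an arbitrary bounded neighborhood $\{\bX : \cnorm{\bX}_\alpha \le M\}$ of a given base point. Theorem~\ref{thm:lin.rho} yields
\[
\cnorm{\Gamma_\bX(f); \Gamma_{\tilde\bX}(f)}_\alpha \le C \fnorm{f}\, \rho_\alpha(\bX,\tilde\bX),
\]
and a term-wise reverse triangle inequality in the definition~\eqref{norm:controlled} of $\cnorm{\cdot\,;\cdot}_\alpha$ shows
\[
\bigl|\cnorm{\Gamma_\bX(f)}_\alpha - \cnorm{\Gamma_{\tilde\bX}(f)}_\alpha\bigr| \le \cnorm{\Gamma_\bX(f); \Gamma_{\tilde\bX}(f)}_\alpha,
\]
so the desired continuity follows by letting $\tilde\bX \to \bX$ in $\rho_\alpha$. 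The only mildly delicate step in the whole argument is the norm-swap in (ii): the linear bound for $\Gamma_\bX$ is naturally expressed in the $\alpha$-norm while density is tested in the weaker $\beta$-norm, and it is precisely the Hölder interpolation above that bridges this gap. Everything else is essentially repackaging of the three main technical results of the section.
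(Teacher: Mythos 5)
Your proof is correct and follows essentially the same route as the paper: (i) from the fibrewise linearity in Lemma~\ref{lem:gamma.lin.bound}, (ii) from the density result of Section~\ref{sec:local-global} plus countability of $\mathcal{S}_N^0$, and (iii) from Theorem~\ref{thm:lin.rho} together with the reverse triangle inequality for $\cnorm{\cdot\,;\cdot}_\alpha$. Your extra step in (ii) --- passing from $f\in\mathcal S_N$ to $g\in\mathcal S_N^0$ via the bound \eqref{estimatefnomrvsbeta} and the H\"older interpolation between the $\alpha$- and $\beta$-norms --- is a detail the paper leaves implicit, and it is carried out correctly.
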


\begin{proof}
  Part (i) follows from \Cref{lem:gamma.lin.bound} applied fiberwise, and part (ii) follows from \Cref{prop:approximation_dyadic_controlled} together with the observation that $\Gamma_0$ is countable as $\mathcal{S}_N^0$ is.

  Finally, we have to check continuity of the mapping $\mathbf{X} \mapsto \cnorm{\gamma(\mathbf{X})}_{\alpha}=\cnorm{\Gamma_{\mathbf{X}}(f)}_{\alpha}$. Since we are working in a metric space, it suffices to pick a sequence $\mathbf{X}_n \rightarrow \mathbf{X}$ as $\alpha$-H\"{o}lder rough paths. Then we apply \Cref{thm:lin.rho} to obtain
  $$|\cnorm{\Gamma_{\mathbf{X}_n}(f)}_\alpha - \cnorm{\Gamma_{\mathbf{X}}(f)}_\alpha|\le\cnorm{\Gamma_{\bX_n}(f);\Gamma_{\bX}(f)}_\alpha \leq C\fnorm{f}\rho_\alpha (\mathbf{X}_n,\mathbf{X}).$$
 Since $\mathbf{X}_n$ converges to $\mathbf{X}$ in the $\alpha$-H{\"o}lder topology, we see that the map $\mathbf{X} \mapsto \cnorm{\gamma (\mathbf{X})}_{\alpha}$ is continuous.
\end{proof}
 It is important to note that $\Gamma$ can be completed to yield a structure which is known as a \emph{continuous field of Banach spaces}, \cite{Dix77}. We will review the general theory of these structures in the next section.
 
 \begin{remark}[Approximation of controlled paths]\label{remark:smooth_approx}
 We will show now that our results immediately yield canonical approximations for controlled rough paths. 
Let $\mathbf{X}$ be an $\alpha$-rough path and assume that there are smooth rough paths $\mathbf{X}_{\varepsilon}$ (i.e. canonical lifts of smooth paths) such that $\mathbf{X}_{\varepsilon} \to \mathbf{X}$ as $\varepsilon \to 0$. Note that this is always the case when $\mathbf{X}$ is geometric. Let $\mathbf{Z} \in \mathscr{D}_{\bX}^{\alpha,\beta}$. We aim to construct smooth $\mathbf{Z}_{\varepsilon} \in \mathscr{D}_{\bX_{\varepsilon}}^{\alpha,\beta}$ such that $ \|\mathbf{Z};\mathbf{Z}_{\varepsilon}\|_{\beta} \to 0$ as $\varepsilon \to 0$. Let $\delta > 0$. From property (ii) in \Cref{thm:almost:cont:field}, we can choose $f \in \mathcal{S}_N$ such that
\begin{align*}
   \cnorm{ \mathbf{Z} ; \Gamma_{\mathbf{X}}(f) }_{\beta} = \cnorm{\mathbf{Z} - \Gamma_{\mathbf{X}}(f)}_{\beta} \leq \delta/2.
\end{align*}
From \Cref{thm:lin.rho}, we can choose $\varepsilon > 0$ sufficiently small to obtain
\begin{align*}
    \cnorm{ \Gamma_{\mathbf{X}}(f) ; \Gamma_{\mathbf{X}_{\varepsilon}}(f) }_{\beta} \leq \delta/2.
\end{align*}
Since $\mathbf{X}_{\varepsilon}$ is smooth, $\Gamma_{\mathbf{X}_{\varepsilon}}(f)$ is also smooth by construction. Therefore we can set $\mathbf{Z}_{\varepsilon} = \Gamma_{\mathbf{X}_{\varepsilon}}(f)$ and conclude with the triangle inequality that $\cnorm{ \mathbf{Z} ; \mathbf{Z}_{\varepsilon} }_{\beta} \leq \delta$.
 \end{remark}

\section{Banach bundles of controlled rough paths}
In this section we construct Banach bundles of controlled rough paths. We recall first some general definitions and results on Banach bundles and continuous fields of Banach spaces.

 \subsection{Continuous fields and Banach bundles}\label{sec:fobs}
 Let us first review two closely related structures which arose in conjunction with $C^\ast$-algebras and provide a convenient framework for our investigation of spaces of controlled rough paths. We shall now present these frameworks and discuss the pertinent examples from the theory of rough paths thereafter.

 \begin{defn}\label{defn_cont:field}
  Let $T$ be a topological space. A \emph{continuous field} $\Gamma$ of Banach spaces over $T$ is a family $(E_t)_{t\in T}$ of Banach spaces, together with a set $\hat{\Gamma} \subseteq \prod_{t \in T} E_t$ (where the elements of $\hat{\Gamma}$ are thought of as functions $\gamma \colon T \rightarrow \prod_t E_t$ with $\gamma(t) \in E_t, \forall t \in T$), such that:
  \begin{enumerate}
      \item[(i)] $\hat{\Gamma}$ is a linear subspace of $\prod_{t \in T} E_t$,
      \item[(ii)] for every $t \in T$ the set $\hat{\Gamma}(t) \coloneq \{\gamma (t) \mid \gamma \in \hat{\Gamma}\}$ is dense in $E_t$,
      \item[(iii)] For every $\gamma \in \hat{\Gamma}$, the function $T \rightarrow \R, t \mapsto \lVert \gamma (t)\rVert$ is continuous,
      \item[(iv)] Let $\tilde{\gamma} \in \prod_{t \in T} E_t$. If for every $t\in T$ and every $\varepsilon > 0$, there exists an $\gamma_t \in \hat{\Gamma}$ such that $\lVert \tilde{\gamma}(x)-\gamma_t(x)\rVert \leq \varepsilon$ on some neighborhood of $t$ in $T$, then $\tilde{\gamma} \in \hat{\Gamma}$.
  \end{enumerate}
  If moreover $\hat{\Gamma}$ contains a countable subset $\Lambda$ such that $\Lambda (t) = \{\gamma(t) \mid \gamma \in \Lambda\}$ is a dense subset in $E_t$ for every $t \in T$, then $\hat{\Gamma}$ is called a \emph{separable continuous field} of Banach spaces over $T$.
 \end{defn}
 In the previous section we have constructed an example of a family of sections of the spaces of controlled rough paths which is almost a continuous field of Banach spaces. Indeed our example satisfies properties (i)-(iii) but not (iv) of \Cref{defn_cont:field}. Let us agree to call such a collection a \emph{continuous pre-field} of Banach spaces. The reason the notion of pre-field does not exist as an independent object of study is that every pre-field can uniquely be completed to a continuous field. Namely, \cite[Proposition 10.2.3]{Dix77} yields

 \begin{prop}\label{prop:fieldcompletion}
   If $T$ is a topological space and $\Gamma$ a pre-field of Banach spaces over $T$. Then there exists a unique continuous field $\hat{\Gamma}$ of Banach spaces over $T$ such that $\Gamma \subseteq \hat{\Gamma}$. Furthermore, if $\Gamma$ admits a countable subset $\Lambda$ such that $\Lambda(t) = \{\gamma (t) \mid \gamma \in \Lambda\}$ is a dense subset of the Banach space over $t$, for all $t \in T$, then $\hat{\Gamma}$ is a separable continuous field of Banach spaces.
 \end{prop}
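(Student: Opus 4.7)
The plan is to define the candidate extension $\hat{\Gamma}$ explicitly, verify the four axioms of a continuous field, then establish uniqueness by a density-plus-continuity argument. Concretely, I would set
\[
\hat{\Gamma} \coloneq \left\{\tilde{\gamma}\in\prod_{t\in T}E_t : \forall t\in T,\forall \varepsilon>0,\ \exists\gamma\in\Gamma\text{ and a nbhd }U\text{ of }t\text{ with }\sup_{x\in U}\|\tilde\gamma(x)-\gamma(x)\|\le\varepsilon\right\}.
\]
By construction $\Gamma\subseteq\hat{\Gamma}$, and condition (iv) of \Cref{defn_cont:field} is forced to hold after we show (iv) can be strengthened from ``approximable by $\hat\Gamma$'' to ``approximable by $\Gamma$''.

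The verification of (i), (ii), (iii) is routine. Linearity follows because, given $\tilde\gamma_1,\tilde\gamma_2\in\hat\Gamma$, $\lambda\in\R$, $t\in T$ and $\varepsilon>0$, one picks $\gamma_i\in\Gamma$ approximating $\tilde\gamma_i$ within $\varepsilon/(2(1+|\lambda|))$ on open sets $U_i\ni t$; then $\gamma_1+\lambda\gamma_2\in\Gamma$ (by (i) for the pre-field) approximates $\tilde\gamma_1+\lambda\tilde\gamma_2$ within $\varepsilon$ on $U_1\cap U_2$. Fibrewise density (ii) is immediate from $\Gamma(t)\subseteq\hat{\Gamma}(t)\subseteq E_t$ and the assumed density of $\Gamma(t)$. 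Continuity of $t\mapsto\|\tilde\gamma(t)\|$ follows from a reverse triangle inequality: choose $\gamma\in\Gamma$ with $\sup_{x\in U}\|\tilde\gamma(x)-\gamma(x)\|\le\varepsilon/3$ on a neighborhood $U$ of $t$, then shrink $U$ so that $|\|\gamma(x)\|-\|\gamma(t)\||<\varepsilon/3$ using the pre-field axiom (iii) for $\Gamma$; the triangle inequality gives $|\|\tilde\gamma(x)\|-\|\tilde\gamma(t)\||<\varepsilon$.

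The main obstacle, and the heart of the proof, is axiom (iv) for $\hat\Gamma$: I must show that if $\tilde\gamma$ is locally $\varepsilon$-approximable by elements of $\hat\Gamma$, then it is locally $\varepsilon$-approximable by elements of $\Gamma$. The key step is a two-stage $\varepsilon/2$ argument. Fix $t\in T$ and $\varepsilon>0$. By hypothesis there is $\sigma\in\hat\Gamma$ and a neighborhood $U$ of $t$ with $\sup_{x\in U}\|\tilde\gamma(x)-\sigma(x)\|\le\varepsilon/2$. By definition of $\hat\Gamma$ there is $\gamma\in\Gamma$ and a neighborhood $V\subseteq U$ of $t$ with $\sup_{x\in V}\|\sigma(x)-\gamma(x)\|\le\varepsilon/2$. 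The triangle inequality on $V$ closes the argument. This proves (iv) and shows $\hat{\Gamma}$ is a continuous field of Banach spaces over $T$ containing $\Gamma$.

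For uniqueness, suppose $\hat{\Gamma}'$ is any continuous field over $T$ with $\Gamma\subseteq\hat{\Gamma}'$. Since $\hat{\Gamma}'$ satisfies (iv), every $\tilde\gamma\in\hat\Gamma$ lies in $\hat{\Gamma}'$, giving $\hat\Gamma\subseteq\hat{\Gamma}'$. Conversely, given $\sigma\in\hat{\Gamma}'$, $t\in T$ and $\varepsilon>0$, use density of $\Gamma(t)$ in $E_t$ to pick $\gamma\in\Gamma$ with $\|\gamma(t)-\sigma(t)\|<\varepsilon/2$; then $\gamma-\sigma\in\hat{\Gamma}'$, and by (iii) the function $x\mapsto\|\gamma(x)-\sigma(x)\|$ is continuous, so $\|\gamma(x)-\sigma(x)\|<\varepsilon$ on a neighborhood of $t$. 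Hence $\sigma\in\hat\Gamma$, giving $\hat{\Gamma}'\subseteq\hat\Gamma$. Finally, the separability claim is immediate: if $\Lambda\subseteq\Gamma$ is countable with $\Lambda(t)$ dense in $E_t$ for every $t$, then the same $\Lambda$ (viewed inside $\hat\Gamma$) witnesses separability of the completion.
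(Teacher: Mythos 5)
Your proof is correct, but it takes a different route from the paper: the paper disposes of existence and uniqueness in one line by citing Dixmier's Proposition 10.2.3 (the standard completion theorem for continuous pre-fields) and only supplies the argument for separability, which — as you also note at the end — is immediate because the countable set $\Lambda$ already lives in $\Gamma\subseteq\hat\Gamma$. What you have done is essentially reconstruct the proof of the cited result from scratch: your explicit description of $\hat\Gamma$ as the set of sections locally uniformly approximable by elements of $\Gamma$, the $\varepsilon/3$ argument for continuity of the norm, the two-stage $\varepsilon/2$ argument showing that axiom (iv) holds for $\hat\Gamma$, and the two inclusions for uniqueness (the second of which correctly exploits density of $\Gamma(t)$ in $E_t$ together with linearity and norm-continuity of the competing field $\hat\Gamma'$) are exactly the standard ingredients of that proof. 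The trade-off is the usual one: your version is self-contained and makes the completion completely explicit — which is arguably more informative for a reader who wants to see what $\hat\Gamma$ actually is — while the paper's version is shorter and defers the nontrivial work to the literature. Both are valid proofs of the proposition.
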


 \begin{proof}
  In view of the cited proposition, we only need to establish the separability. This is however trivial since we have assumed that there is a countable subset $\Lambda$ of $\Gamma \subseteq \hat{\Gamma}$ which yields the (countable) dense set $\Lambda(t)$ in each fibre for $t \in T$. Thus $\hat{\Gamma}$ is separable.
\end{proof}

The concept of a continuous fields is a convenient framework in which one can speak about continuous sections on a topological space with values in a collection of Banach spaces. The main point here is that the union of fibres $E \coloneq \sqcup_{t \in T} E_t$ does not need to carry a topology while the sections are still continuous in an appropriate sense. If one considered $E \rightarrow T$ as a bundle in the usual sense, it would be natural to define a topology on $E$ making the elements of a continuous field continuous. Indeed there exists a canonical way to introduce such a topology on $E$ from a given continuous field.

\begin{defn}\label{tubetopology}
 Let $\Gamma$ be a continuous (pre-)field of Banach spaces on a topological space $E$ and denote by $(E_t)_{t\in T}$ the family of Banach spaces over $T$. Define $E \coloneq \bigsqcup_{t \in T} E_t$ and let $p\colon E \rightarrow T$ be the canonical projection $p(e) = t$ if $e \in E_t$. Then we consider for $\gamma \in \Gamma$, $U \subseteq T$ open and $\varepsilon >0$ the \emph{tube}
 \begin{equation}\label{tubes}
 W (\gamma, U,\varepsilon) = \left\{b \in E\, \middle|\, p(b) \in U,\, \lVert b-\gamma (p(b))\rVert < \varepsilon \right\}
\end{equation}
From now on we endow $E$ with the topology generated by the base
$$\{W(\gamma,U,\varepsilon) \mid \gamma \in \Gamma, U \subseteq T \text{ open, and } \varepsilon >0 \}.$$
\end{defn}

 The point is that the tubes define a base of a Hausdorff topology on $E$. Before we discuss this, let us recall following concept.

\begin{defn}
Let $E,T$ be Hausdorff topological spaces and $p \colon E \rightarrow T$ be a continuous and open surjective map. We call the triple $\mathcal{E} = (B,p,T)$ a \emph{Banach bundle} over $T$ if each fibre $E_t \coloneq p^{-1}(t), t \in T$ is a Banach space and the following conditions hold
\begin{enumerate}
    \item[(i)] the map  $E \rightarrow \R, s \mapsto \lVert s\rVert$ is continuous.
    \item[(ii)] $+ \colon E_t \times E_t \rightarrow E,\quad (a_1,a_2) \mapsto a_1 + a_2$ is continuous for all $t\in T$,
    \item[(iii)] For each $\lambda \in \R,$ the map $E \rightarrow E, a \mapsto  \lambda a$ is continuous,
    \item[(iv)] if $t \in T$ and $\{a_i\}_{i \in I}$ is any net of elements\footnote{While the general definition of a Banach bundle is naturally phrased using nets, we will only encounter Banach bundles which are metrisable topological spaces. Thus all nets we have to care about are countable, i.e. sequences.} in $E$ such that $\lVert a_i\rVert\rightarrow 0$ and $p(a_i) \rightarrow t$ in $T$, then $a_i \rightarrow 0_t$ in $T$.
\end{enumerate}
Note that in the notation we usually suppress the index for the vector space operations and norms if the fibre is clear from the context. A continuous map $\sigma \colon T \rightarrow E$ such that $p\circ \sigma = \id_T$ is called \emph{continuous crossection}.
\end{defn}

\begin{remark}
A Banach bundle in the above sense is a special type of fibre bundle whose fibres are Banach spaces. It is more general than a vector bundle whose fibres are Banach spaces (and which is also often called a Banach bundle in the literature). Note that contrary to vector bundles, Banach bundles do in general not admit a local trivialisation by a bundle trivialisation. Consequently, we do not claim that a canonical trivialisation exists for the Banach bundle $B$.
\end{remark}

Banach bundles are (up to some technicalities) equivalent to continuous fields of Banach spaces as the next result shows:

\begin{prop}\label{prop:Banachbundleconst}
 Let $\Gamma$ be a continuous pre-field of Banach spaces over a metric space $(T,d)$. Let again $(E_t)_{t \in T}$ be the family of Banach fibres and endow $E\coloneq \sqcup_{t \in T} E_t$ with the topology from \Cref{tubetopology}. Then the canonical projection $p\colon E \rightarrow T$ turns $(E,p,T)$ into the unique Banach bundle such that the following holds:
 \begin{enumerate}
     \item Every element $\sigma \in \Gamma$ is a continuous cross section of the Banach bundle.
     \item The Banach bundle has enough cross sections, i.e.~for every $b \in E$ there exists a continuous cross section $\sigma_b$ with $\sigma_b (p(\sigma))=b$.
     \item If the completion $\hat{\Gamma}$ is a separable continuous field of Banach spaces, $E$ is a metric space. Moreover, $E$ is separable if and only if $T$ is separable.
 \end{enumerate}
\end{prop}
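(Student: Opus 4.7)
My plan is to follow the standard construction of a Banach bundle associated to a continuous field of Banach spaces, as detailed in \cite{Dix77} and \cite{FaD88}. First I would complete the pre-field $\Gamma$ to a full continuous field $\hat{\Gamma}$ via \Cref{prop:fieldcompletion}, and then observe that the tube topology generated by $\Gamma$ coincides with the one generated by $\hat{\Gamma}$: axiom (iv) of \Cref{defn_cont:field} ensures that every section in $\hat{\Gamma}$ is locally uniformly approximable by sections in $\Gamma$, so each tube $W(\hat{\gamma}, U, \varepsilon)$ can be covered locally by tubes $W(\gamma, U', \varepsilon')$ with $\gamma \in \Gamma$, and vice versa. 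Hence I may work with $\hat{\Gamma}$ throughout without loss of generality.

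The first block of work is to verify the topological and algebraic axioms of a Banach bundle. I would check in order: (a) the tubes form a base --- given $b$ in an intersection $W(\gamma_1, U_1, \varepsilon_1) \cap W(\gamma_2, U_2, \varepsilon_2)$, pick $\gamma \in \hat{\Gamma}$ with $\gamma(p(b)) = b$ (existence via density (ii) combined with closure (iv) in \Cref{defn_cont:field}), then shrink the neighborhood of $p(b)$ and the radius using continuity of $t \mapsto \|\gamma(t) - \gamma_i(t)\|$; (b) the projection $p$ is continuous and open, since $p(W(\gamma, U, \varepsilon)) = U$; (c) the tube topology is Hausdorff --- points in distinct fibres are separated by pulling back disjoint neighborhoods from $T$, while two points $a \neq b$ of the same fibre $E_t$ are separated by picking sections $\gamma_a, \gamma_b \in \hat{\Gamma}$ with $\gamma_a(t) = a$, $\gamma_b(t) = b$ and tubes of radius $\|a - b\|/3$ over a neighborhood of $t$ on which $\|\gamma_a - \gamma_b\|$ stays close to $\|a - b\|$ (possible by axiom (iii)); (d) continuity of the norm, addition, and scalar multiplication follows by standard tube arguments using (iii); (e) the zero-net axiom holds: if $\|a_i\| \to 0$ and $p(a_i) \to t$, then eventually $a_i$ lies in every prescribed tube $W(0, U, \varepsilon)$ around the zero section, which is exactly convergence to $0_t$. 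Uniqueness of the bundle structure follows because the tube topology is the coarsest one making all sections in $\Gamma$ continuous and the norm map on $E$ continuous.

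For (1) and (2), if $\sigma \in \Gamma$ then $\sigma^{-1}(W(\gamma, U, \varepsilon)) = \{t \in U : \|\sigma(t) - \gamma(t)\| < \varepsilon\}$ is open by (iii) applied to $\sigma - \gamma \in \hat{\Gamma}$, so $\sigma$ is a continuous cross section; and given $b \in E_t$, density (ii) combined with closure (iv) produces $\sigma_b \in \hat{\Gamma}$ with $\sigma_b(t) = b$, continuous by the same argument.

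Finally, for (3) I would prove metrisability assuming $\hat{\Gamma}$ contains a countable family $\Lambda = \{\sigma_n\}_{n \ge 1}$ with $\Lambda(t)$ dense in each $E_t$. The strategy is to show that the tube topology is regular (separation of points from closed sets via shrinking tubes) and second countable, the latter by observing that the countable collection of tubes $W(\sigma_n, U, q)$, where $U$ ranges over a countable base of $T$, $\sigma_n \in \Lambda$, and $q \in \mathbb{Q}_{>0}$, is in fact a base for the tube topology; then Urysohn's metrisation theorem applies. Verifying second countability is the step I expect to require the most care: one must show that any tube $W(\gamma, V, \varepsilon)$ around an arbitrary section can be locally expressed in terms of the countable family, which is where density of $\Lambda(t)$ in each fibre combined with continuity of $t \mapsto \|\sigma_n(t) - \gamma(t)\|$ enters crucially. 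Separability of $E$ then follows by evaluating the countable family $\Lambda$ at a countable dense subset of $T$, producing a countable dense subset of $E$; the converse is immediate since $T = p(E)$ is the continuous image of a separable space.
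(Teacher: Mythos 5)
Your treatment of the bundle axioms and of parts (1) and (2) is sound and essentially parallels the paper's route, except that you verify the axioms by hand where the paper simply invokes the existing machinery for Banach bundles (namely \cite[13.18 Theorem and Remark 13.19]{FaD88}); that is a legitimate, more self-contained alternative. The identification of the tube topologies generated by $\Gamma$ and $\hat{\Gamma}$ is also correct and is implicitly used in the paper as well.

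There is, however, a genuine gap in your proof of part (3). You propose to establish metrisability of $E$ via Urysohn's metrisation theorem, which requires $E$ to be \emph{second countable}; your candidate countable base of tubes $W(\sigma_n,U,q)$ presupposes a countable base of open sets $U$ for $T$, i.e.\ that $T$ is second countable (equivalently, separable, since $T$ is metric). But the proposition does not assume $T$ separable, and indeed the statement explicitly allows $E$ to be metrisable while non-separable --- this is exactly the situation for the bundle over the full space of branched rough paths $\BRP^\alpha$, which is not separable, yet whose total space is asserted (and later used) to be a metric space. With your argument you would only recover the weaker result already in the literature (metrisability under the extra hypothesis that $T$ is separable, cf.\ \cite{Laz18}). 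The repair is the one the paper carries out: since $T$ is metric it is paracompact and admits a $\sigma$-locally finite base $\mathcal{B}=\bigcup_n\mathcal{B}_n$ (Stone / \cite[Corollary 4.4.4]{Eng89}); one then builds a $\sigma$-locally finite base of tubes $W(\gamma,U_i,r)$ with $\gamma$ ranging over the countable dense family $\Lambda$, $U_i\in\mathcal{B}_n$ and $r\in\Q\cap(0,1)$, checks regularity of $E$ (via complete regularity, \cite[Proposition A.1]{Laz18}), and applies the \emph{Nagata--Smirnov} metrisation theorem instead of Urysohn. Your density argument for showing that these tubes form a base is exactly the right ingredient; it is only the countability assumption on the base of $T$, and hence the choice of metrisation theorem, that must be replaced.
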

\begin{proof}
 By definition of a continuous pre-field of Banach spaces the set $\Gamma (t) = \{\gamma (t) \mid t \in T\}$ is a dense linear subspace of $E_t$ for each $t \in T$. Moreover, for every $\gamma \in \Gamma$, the function $t \mapsto \lVert \gamma(t)\rVert$ is continuous. Hence, the prerequisites of \cite[13.18 Theorem]{FaD88} are verified. Applying the theorem, we see that the tubes \eqref{tubes} form the basis of a Hausdorff topology on $E$. This topology turns $(E,p,T)$ into a Banach bundle such that every element $\sigma$ of $\Gamma$ becomes a continuous cross section of this bundle. Furthermore this topology is unique and satisfies (1) and has enough cross sections by \cite[Remark 13.19]{FaD88}, i.e.~(2) holds.

 We are left to prove (3) and note first that by uniqueness of the Banach bundle topology, we may replace $\Gamma$ with its completion $\hat{\Gamma}$. Hence without loss of generality we may assume now that $\Gamma$ is a separable continuous field of Banach spaces with a countable subset $\Lambda$ such that $\Lambda(t) = \{\gamma (t)\mid \gamma \in \Lambda\}$ is a dense subset of $E_t$ for each $t \in T$.
 To see that $E$ is metrisable we note first that, as a metric space, $T$ is paracompact. Hence we can apply \cite[Proposition A.1]{Laz18} and deduce that $E$ is a completely regular topological space. In particular, $E$ is a regular topological space. Further, \cite[Corollary 4.4.4]{Eng89} shows that $T$ admits a $\sigma$-finite topological base $\mathcal{B}$, i.e.\ a topological base which is a countable union of locally finite families of open sets $\mathcal{B} = \bigcup_{n\in \N} \mathcal{B}_n$.

 Let us construct a $\sigma$-finite topological base for $E$. For this let $\mathcal{B}_n = \{U_i\}_{i\in S_n}, n\in \N$ for the locally finite families of open sets $U_i$ comprising $\mathcal{B}$. Consider the family $\mathcal{T}$ of tubes defined via
 \begin{equation}\label{spec_tube}
     W(\gamma, U_i, r), \gamma \in \Lambda, i\in S_n, r \in \Q \cap (0,1).
 \end{equation}
 By construction $\mathcal{T}$ consists of open neighborhoods in $E$ and if $W(\gamma, U_i, r) \cap W(\gamma', U_i', s) \neq \emptyset$ we must have $U_i \cap U_i' \neq \emptyset$. As $\Lambda$ is countable, the set
 $$\mathcal{T} = \bigcup_{n \in \N} \bigcup_{r \in \Q \cap (0,1)}\bigcup_{\gamma \in \Lambda} \{W(\gamma,U_i,r)\}_{i \in S_n}$$
 is again $\sigma$-finite. We are left to prove that it is a topological base. Pick $W(\eta,U,r)$, for $\eta \in \Gamma, U\subseteq T$ open and some $r >0$. For any fixed $b \in W(\eta,U,r)$ we construct a tube of the form \eqref{spec_tube} contained in $W(\eta,U,r)$ which contains $b$. Pick a rational number $0<\varepsilon < \lVert b - \eta (p(b))\rVert /2$. Exploiting that the image $\Lambda(t)$ of $\Lambda$ is dense in $E_t$ for every $t \in T$, we can pick $\gamma_b \in \Gamma_0$ such that $\lVert b- \gamma_b(p(b))\rVert < \varepsilon$. Applying the triangle inequality we see that $\gamma_b (p(b)) \in W(\eta,U,r)$. Moreover, $t \mapsto \lVert \gamma_b(t)-\eta(t)\rVert$ is continuous, whence there is a small neighborhood $O$ of $p(b)$ in $U$ such that $W(\gamma_b,O,\varepsilon) \subseteq W(\eta,U,r)$. As $\mathcal{B}$ is a topological base of $T$, we can pick $i \in S_n$ for some $n\in \N$ such that $p(b)\in U_i \subseteq O$. This implies that $b \in W(\gamma_b, U_i,\varepsilon) \subseteq W(\eta,U,r)$. We conclude that $\mathcal{T}$ is a $\sigma$-finite topological base.

 Summing up, $E$ is a regular topological space with a $\sigma$-finite topological base, hence the Nagata-Smirnov metrisation theorem \cite[4.4.7]{Eng89} establishes that $E$ is metrisable.
 To establish the claim on separability, we recall that $(E,p,T)$ has enough cross sections and $T$ is a metric space. Metric spaces are second countable if and only if they are separable. Since $\Lambda (t)$ is dense in every $E_t$, \cite[Proposition 13.21]{FaD88} shows that the metric space $E$ is second countable (equivalently separable) if and only if $T$ is so. This establishes (3) and finishes the proof.
\end{proof}

\begin{remark}
Note that most of the statements in \Cref{prop:Banachbundleconst} are easy corollaries from well known results on Banach bundles. Hence they are hardly surprising. However, we were unable to locate the statement on the metrisability of the total space in the literature. (A metrisability statement for the total space under the more restrictive assumption that $T$ is separable can be found in \cite{Laz18}.)
\end{remark}

The uniqueness assertion in \Cref{prop:Banachbundleconst} shows that a continuous (pre-)field of Banach spaces uniquely determines a Banach bundle with enough sections. A deep result due to A.~Douady, L.~dal Soglio Herault and K.H.~Hofmann shows that this can be reversed. Indeed over paracompact topological spaces, there is a one-to-one correspondence between continuous fields of Banach spaces and Banach bundles which admit continuous cross sections through every point (see \cite[Appendix C]{FaD88}).
However, for later use (see \Cref{rem:equivalence}) it is important to note that the general construction only allows one to recover the continuous field, not the pre-field from which the field might have arisen by completion. 

We conclude this section with a brief discussion of the construction of the metric on the total space of the Banach bundle in \Cref{prop:Banachbundleconst}. This is essentially just a recap of the proof of the Nagata-Smirnov metrisation theorem. However, we need the cocnrete form of the metric constructed in the next section to study the Banach bundle of controlled rough paths.

\subsubsection{The Nagata-Smirnov metric on the Banach bundle}
Let $(E,p,T)$ be a Banach bundle over a metric space $(T,\rho)$ constructed from a separable continuous Banach bundle $\Gamma$ with countable dense subset $\Lambda$ via \Cref{prop:Banachbundleconst}. To understand convergence in the metric space $(E,d_E)$ we explicitly construct $d_E$ by revisit the Nagata-Smirnov metrisation theorem, \cite[4.4.7]{Eng89}. Its proof shows that the construction hinges on two choices:
\begin{enumerate}
    \item the $\sigma$-finite topological base $U_i, i \in S_n, n \in \N$. We note that since $(T,\rho)$ is a metric space, we can choose every $U_i = B_{r_i} (t_i)$  as a metric ball in $T$ with $t_i \in T$, and $0< r_i < 1$ (cf.\ \cite[Theorem 4.4.3]{Eng89}.
    \item For every tube in the basis of the topology one picks
    $$W(\gamma,B_{r_i}(t_i),\delta), i\in S_n, \gamma \in \Lambda , n \in N, \delta \in \mathbb{Q} \cap (0,1)$$
    a continuous function $f_{(\gamma,i,\delta)} \colon E \rightarrow [0,1]$ such that $W(\gamma,B_{r_i}(t_i),\delta)= f_{(\gamma,i,\delta)}^{-1}(]0,\infty[)$.
\end{enumerate}
Note that also here $\delta < 1$ is enough since the section ins $\Lambda$ have dense image in every fibre.
In the situation at hand, the following functions are valid choices
    \begin{align}\label{special_f}
    f_{(\gamma,i,\delta)} (b) \coloneq \begin{cases}
        (r_i - \rho (p(b),t_i))(\delta-\lVert b-\gamma(p(b))\rVert), & \text{if } b \in W(\gamma , B_{r_i}(t_i),\delta) \\
        0, & \text{else}
        \end{cases}.
        \end{align}
For later use we remark that both factors in the product in \eqref{special_f} are smaller then $1$ by choice of $r_i$ and $\delta$. By construction $|f_{(\gamma,i,\delta)}(x) - f_{(\gamma,i,\delta)} (y)|= 0$ if $(x,y) \not\in W(\gamma,U_i,\delta) \times E \cup E \times W(\gamma,U_i,\delta)$.
To ease notation, let us agree on the following definition:

\begin{defn}
For $\gamma \in \Gamma$, $i \in S_n, n\in \N$ such that $U_i = B_{r_i} (t_i)$ for some $r_i > 0$ and rational $\delta > 0$, we define
a pseudometric on $E$
\begin{align*}
    d_{(\gamma,i, \delta)}(x,y) = |f_{(\gamma,i,\delta)}(x) - f_{(\gamma,i,\delta)}(y)|.
\end{align*}
\end{defn}

Pick now an enumeration $m \mapsto (n_m,\gamma_m, \delta_m)$ of the countable set $\N \times \Lambda \times (\mathbb{Q} \cap ]0,\infty[)$ and recall that the open sets $U_i = B_{r_i} (t_i)$ for $i \in S_{n_m}$ form a locally finite family. Thus we obtain a well-defined continuous function
$$d_{m} \colon E \times E \rightarrow [0,\infty[ ,\quad (x,y) \mapsto \sum_{i\in S_{n_m}} d_{(\gamma_m,i, \delta_m)} (x,y).$$
Cutting off at $1$ and summing up, this yields the Nagata-Smirnov metric on $E$:
$$d_E\colon E \times E\rightarrow \R , d_E(x,y) \coloneq \sum_{m \in \N} 2^{-m} \min \{1,d_{m} (x,y)\}.$$
Note that convergence $x_n \rightarrow x$ in the metric $d_E$ implies $d_{m}(x_n,x) \rightarrow 0$ for all $m \in \N$.
As a direct consequence of the construction of the $d_m$ we thus obtain the following.

\begin{lemma}\label{lemma:convergence}
Let $(x_n)_{n\in \N}$ be a sequence in $(E,d_E)$. Then $x_n \rightarrow_{d_E} x$ if and only if
\begin{align*}
\lim_{k \rightarrow \infty} d_{(\gamma_m,i,\delta_m)} (x_k,x) = 0 \text{ for all } m\in \N, i \in S_{n_m}.
\end{align*}
\end{lemma}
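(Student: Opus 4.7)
The plan is to reduce convergence in $d_E$ to termwise convergence via a dominated convergence argument for series, and then handle each pseudometric $d_m$ separately by exploiting the local finiteness of $\{U_i\}_{i\in S_{n_m}}$ together with the continuity of $p$ (which I will have to recover from the hypothesis in the reverse direction). Since $2^{-m}\min\{1,d_m(x,y)\}\le 2^{-m}$ and $\sum_m 2^{-m}<\infty$, the convergence $d_E(x_k,x)\to 0$ is equivalent to $d_m(x_k,x)\to 0$ for every $m\in\N$. This reduction is straightforward in one direction (monotone tail bounds) and, combined with dominated convergence, gives the other.

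For the forward direction, assume $d_E(x_k,x)\to 0$. Then $d_m(x_k,x)\to 0$ for each $m$. Since $d_m(x_k,x)=\sum_{i\in S_{n_m}}d_{(\gamma_m,i,\delta_m)}(x_k,x)$ is a sum of nonnegative terms (with, for any fixed pair, only finitely many nonzero by local finiteness of $\{U_i\}$ and the support condition on the $f_{(\gamma_m,i,\delta_m)}$), each summand must itself tend to zero, as required.

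The reverse direction splits into two steps. Step one: show that the hypothesis forces $p(x_k)\to p(x)$ in $(T,\rho)$. Given $\varepsilon>0$, use the base property of $\{U_i\}$ to pick $i$ with $p(x)\in U_i=B_{r_i}(t_i)\subseteq B_\varepsilon(p(x))$. By density of $\Lambda(p(x))$ in the fibre $E_{p(x)}$, choose $\gamma\in\Lambda$ and a rational $\delta\in(0,1)$ with $\|x-\gamma(p(x))\|<\delta/2$, so that $f_{(\gamma,i,\delta)}(x)=:c>0$. The hypothesis gives $|f_{(\gamma,i,\delta)}(x_k)-c|\to 0$, in particular $f_{(\gamma,i,\delta)}(x_k)>0$ for $k$ large, which by the support description of $f_{(\gamma,i,\delta)}$ forces $p(x_k)\in U_i\subseteq B_\varepsilon(p(x))$. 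Step two: fix $m$. By local finiteness of $\{U_i\}_{i\in S_{n_m}}$ there is a neighbourhood $V$ of $p(x)$ meeting only finitely many $U_i$, collected in a finite set $F\subseteq S_{n_m}$. For $k$ large, $p(x_k)\in V$; for $i\notin F$ neither $p(x)$ nor $p(x_k)$ lies in $U_i$, so $f_{(\gamma_m,i,\delta_m)}$ vanishes at both and $d_{(\gamma_m,i,\delta_m)}(x_k,x)=0$. Hence $d_m(x_k,x)=\sum_{i\in F}d_{(\gamma_m,i,\delta_m)}(x_k,x)\to 0$ as a finite sum of null sequences. Applying the reduction from the first paragraph yields $d_E(x_k,x)\to 0$.

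The main obstacle is step one of the reverse direction: the hypothesis speaks only of fibrewise-looking pseudometric values, whereas convergence in $d_E$ also encodes convergence of the base points. Bridging this gap requires constructing a ``detector'' tube that is genuinely nontrivial at $x$, which is exactly where density of $\Lambda$ in each fibre, combined with the explicit product form of $f_{(\gamma,i,\delta)}$ in \eqref{special_f}, is used. Everything else is either bookkeeping (finite sums, dominated convergence for a geometric-weighted series) or direct application of local finiteness.
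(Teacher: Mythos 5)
Your proof is correct and follows the only natural route; the paper itself gives no argument, asserting the lemma as a ``direct consequence of the construction of the $d_m$'', so you are supplying the details it leaves implicit. The two points that genuinely need checking --- recovering $p(x_k)\to p(x)$ from a nontrivial detector tube (using density of $\Lambda(p(x))$ and the product form of $f_{(\gamma,i,\delta)}$) and then using local finiteness of $\{U_i\}_{i\in S_{n_m}}$ to reduce $d_m$ to a finite sum --- are both handled correctly, and the reduction over $m$ by domination with $2^{-m}$ is standard.
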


Finally, we note that if $x,y \in W(\gamma, B_{r_i}(t_i),\delta)$ then an elementary estimate yields
\begin{align*}
    d_{(\gamma, i, \delta)}(x,y)  \leq& \underbrace{(r_i - \rho(p(x),t_i))}_{\in (0,1) }(\lVert x-\gamma(p(x))\rVert - \lVert y - \gamma(p(y))\rVert) \\
    &+ \underbrace{(\delta-\lVert y-\gamma(p(y))\rVert)}_{\in (0,1)}(\rho (p(y),t_i)-\rho(p(x),t_i))|
\end{align*}
Hence the pseudometric $ d_{(\gamma, i, \delta)}(x,y)$ is dominated by the sum of the two terms
\begin{align}
    |\lVert x-\gamma(p(x))\rVert - \lVert y - \gamma(p(y))\rVert| & \label{norm:triag}\\
    |\rho(p(y),t_i)-\rho(p(x),t_i)| &\leq \rho(p(y),p(x)) \label{metric:triag}
\end{align}
The expression \eqref{norm:triag} can only be simplified using the reverse triangle inequality (as we did in \eqref{metric:triag}) if $p(x)=p(y)$, i.e.\ if both points are contained in the same fibre.

For a converging sequence $x_n\rightarrow_{d_E} x$, where $x$ lies in the tube $W(\gamma, B_{r_i}(t_i),\delta)$, the above terms control convergence if enforced for all tubes in the base in which $x$ is contained. By \Cref{lemma:convergence} all terms \eqref{norm:triag} and \eqref{metric:triag} which are not cut off need to converge to $0$. Besides the estimate \eqref{metric:triag}, continuity of $p$ with respect to the tube topology yields $p(x_n) \rightarrow_\rho p(x)$ if $x_n\rightarrow_{d_E} x$.

 In the next section, we shall consider the Nagata-Smirnov metric for the Banach bundle of controlled rough paths. Our preparation here enables us to prove that it is equivalent to a metric constructed extrinsically on the union of the Banach fibres. This metric has been used extensively in the literature (see e.g. \cite{FaH20}) to establish stability of rough integration.

 \subsection{Properties of the Banach bundles of controlled rough paths}\label{sec:control-bundle}

We have seen in \Cref{thm:almost:cont:field} that the spaces of controlled rough paths form a continuous pre-field of Banach spaces over the space of branched rough paths. Moreover, we deduce from the result and \Cref{prop:fieldcompletion} the following:

\begin{corollary}\label{cor:field}
The continuous pre-field $\Gamma$ from \Cref{thm:almost:cont:field} can be uniquely completed to a separable continuous field of Banach spaces $\hat{\Gamma}$ over the space of branched $\alpha$-rough paths.
\end{corollary}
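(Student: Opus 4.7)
The plan is to simply invoke \Cref{prop:fieldcompletion} applied to the pre-field $\Gamma$ constructed in \Cref{thm:almost:cont:field}. First I would verify that $\Gamma$ really is a continuous pre-field of Banach spaces over $\BRP^\alpha$ in the sense of \Cref{defn_cont:field}(i)-(iii): parts (i) and (iii) of \Cref{thm:almost:cont:field} give conditions (i) and (iii) of the definition directly (fiberwise linearity plus continuity of $\bX\mapsto\cnorm{\Gamma_\bX(f)}_\alpha$), while \Cref{defn_cont:field}(ii) follows from \Cref{thm:almost:cont:field}(ii) combined with \Cref{cor:denseGamma}, since the countable set $\Gamma_0(\bX)$ is already dense in $\D{\alpha}{\bX}$ (and in particular the larger set $\Gamma(\bX)$ is dense too).

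Next I would apply \Cref{prop:fieldcompletion} with $T=\BRP^\alpha$ (which is a metric space, hence a topological space) and the pre-field $\Gamma$. The proposition immediately yields a unique continuous field $\hat\Gamma$ of Banach spaces with $\Gamma\subseteq\hat\Gamma$. For the separability assertion, take $\Lambda\coloneq\Gamma_0$, which is countable because $\mathcal S_N^0=\bigoplus_{h\in\cF_{(N-1)}}\Omega$ is a finite direct sum of the countable dense set $\Omega\subset\FnSp$. By \Cref{thm:almost:cont:field}(ii), $\Lambda(\bX)$ is dense in $\D{\alpha}{\bX}$ for every $\bX\in\BRP^\alpha$, and so the hypothesis in the second half of \Cref{prop:fieldcompletion} is met. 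The conclusion is then that $\hat\Gamma$ is a separable continuous field of Banach spaces.

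There is no real obstacle here, since all the work was done in \Cref{thm:almost:cont:field}, \Cref{cor:denseGamma} and \Cref{prop:fieldcompletion}. The only point worth being careful about is to make sure the countable set exhibiting separability lies inside $\Gamma$ (not merely inside $\hat\Gamma$), so that it survives the completion procedure; this is automatic because $\Gamma_0\subseteq\Gamma\subseteq\hat\Gamma$ by construction.
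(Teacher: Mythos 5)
Your proposal is correct and follows exactly the route the paper takes: the paper states the corollary as an immediate consequence of \Cref{thm:almost:cont:field} together with \Cref{prop:fieldcompletion}, with the countable set $\Gamma_0$ (countable since $\mathcal S_N^0$ is a finite direct sum of copies of the countable set $\Omega$) furnishing the separability hypothesis. Your added remark that the witnessing countable family must sit inside $\Gamma$ itself is a sensible precaution, and it is indeed satisfied here.
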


We will now leverage the theory recalled in the last section and consider two Banach bundles of controlled rough paths. For this let us fix some notation

\begin{defn}
 We denote by $\BRP^\alpha$ the \emph{space of branched rough paths} for some $0 <\alpha < 1$ and recall that it is a metric space with respect to the $\alpha$-rough path metric $\rho_\alpha$.
 Further, we let $\BRP^\alpha_g$ be the closed subspace of geometric rough paths endowed with the metric induced by the space of branched rough paths. For brevity we will also write $\rho_\alpha$ for the metric on this space.

 For any $0 <\beta < \alpha$ we define now the bundles
 \begin{align}
     p \colon E^{\alpha,\beta} \coloneq \bigsqcup_{\bX \in \BRP^\alpha} \mathscr{D}_\bX^{\alpha,\beta} \rightarrow \BRP^\alpha \label{branchedbundle} \\
     p \colon E^{\alpha,\beta}_g \coloneq \bigsqcup_{\bX \in \BRP^\alpha_g} \mathscr{D}_\bX^{\alpha,\beta} \rightarrow \BRP^\alpha_g \label{geometricbundle}
 \end{align}
 where $p$ is the canonical projection of a controlled rough path onto the controlling rough path.
\end{defn}

Combining now \Cref{cor:field} with \Cref{prop:Banachbundleconst} we see that the field of controlled rough paths induces a unique Banach bundle structure on the bundles \eqref{branchedbundle} and \eqref{geometricbundle}.
Moreover, we obtain the following result.

\begin{prop}\label{lem:metricstruct1}
The total spaces of the Banach bundles \eqref{branchedbundle} and \eqref{geometricbundle} are separable metric spaces with the Nagata-Smirnov metric. Moreover, $E^{\alpha,\beta}_g$ is separable  while $E^{\alpha,\beta}$ is not. 
\end{prop}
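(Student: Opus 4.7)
The plan is to apply the general Banach bundle machinery of \Cref{prop:Banachbundleconst} to the separable continuous field $\hat\Gamma$ from \Cref{cor:field}, thereby reducing both the metrisability claim and the separability/non-separability dichotomy to the corresponding properties of the base spaces $\BRP^\alpha$ and $\BRP^\alpha_g$.

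First, since $(\BRP^\alpha,\rho_\alpha)$ and $(\BRP^\alpha_g,\rho_\alpha)$ are metric spaces and $\hat\Gamma$ (respectively, its restriction to $\BRP^\alpha_g$) is a separable continuous field of Banach spaces, part (3) of \Cref{prop:Banachbundleconst} applies to both bundles. It yields metrisability of $E^{\alpha,\beta}$ and $E^{\alpha,\beta}_g$, realised concretely by the Nagata--Smirnov metric constructed in the previous subsection, together with the equivalence that each total space is separable if and only if its base is separable. The problem therefore reduces to showing that $\BRP^\alpha_g$ is separable whereas $\BRP^\alpha$ is not.

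For the geometric case, separability is standard: the canonical (e.g.\ Stratonovich-type) rough path lift of a smooth path varies continuously in $\rho_\alpha$ under smooth perturbations, and any countable dense family of smooth paths (for instance rational piecewise-linear paths) produces a countable dense subset of $\BRP^\alpha_g$. Invoking \Cref{prop:Banachbundleconst}(3) again yields separability of $E^{\alpha,\beta}_g$.

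For the branched case, I would exhibit an uncountable subset of $\BRP^\alpha$ whose points are pairwise $\rho_\alpha$-distant by at least some $c>0$. The key observation is that $\rho_\alpha(\bX,\tilde\bX)$ dominates $\|\bX^{\tdot}-\tilde\bX^{\tdot}\|_\alpha$, i.e.\ the H\"older seminorm of the first level. Since $C^\alpha$ is itself non-separable in its natural norm, one can produce an uncountable $c$-separated family of one-dimensional $\alpha$-H\"older paths (for instance by translating and rescaling a Weierstrass-type function). Lifting each member to a branched rough path (trivially when the alphabet $A$ is a singleton, and otherwise by fixing the remaining coordinates to zero and using the canonical smooth lift of those components) provides the required uncountable $c$-separated family in $\BRP^\alpha$. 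The main obstacle is precisely this last step: one must verify that the H\"older-separation of first levels survives the lift to the full Hopf algebra $\cH$ without being collapsed by the Chen relations, which is the only point of the argument that is not purely formal; all other steps are direct consequences of \Cref{prop:Banachbundleconst} and \Cref{cor:field}.
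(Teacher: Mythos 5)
Your proposal is correct and follows essentially the same route as the paper, which states this proposition without a written proof as an immediate consequence of \Cref{prop:Banachbundleconst}(3) applied to the separable field from \Cref{cor:field}, combined with the standard separability of the geometric rough path space and non-separability of the full (H\"older-type) space. One small clarification: your final caveat is unnecessary, since $\rho_\alpha$ dominates the first-level H\"older distance by definition, so \emph{any} lift of a $c$-separated family in $C^\alpha$ remains $c$-separated and nothing can be ``collapsed by Chen's relations''; the only point to verify is that lifts exist at all, which the canonical lift of a one-dimensional path (extended by zero in the other labels) provides.
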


Having obtained the Banach bundles of controlled rough paths over the branched and geometric rough paths, let us introduce another metric which has been used in stability analysis of rough integrals (see e.g. \cite{FaH20})

\begin{defn}\label{defn:flatmetric}
 Let $E$ be either $E^{\alpha,\beta}$ or $E^{\alpha,\beta}_g$. We define a map $d^{\flat}_\alpha \colon E \times E \rightarrow \R$ with the help of \eqref{norm:controlled}. Namely for $x,y \in E$, we set 
\begin{align*}
    d^{\flat}_\alpha(x,y) \coloneq \rho_{\alpha}(p(x),p(y)) + \cnorm{x;y}_{\alpha}.
\end{align*}
In the following, whenever we derive results on $d^\flat_\alpha$ which hold for both Banach bundles we consider we shall denote their total spaces as $E$. 
\end{defn}

Note that $\cnorm{x;y}_\alpha$ makes sense for arbitrary elements $x,y \in E$ but it only constitutes a norm for elements belonging to the same fibre of the total space. The trick is that we can still compute a H\"{o}lder distance of the remainders as they take their values in the same Banach space. From the point of view of the Banach bundle this is however an extrinsic construction which does not reflect the geometric structure of the bundle (whence we chose to call this metric the \emph{flat} metric, as it exploits an extrinsic embedding into a flat space). 

\begin{lemma}
    The map $d^\flat_{\alpha}$ is a metric on the total space $E$, called the \emph{flat metric}.
\end{lemma}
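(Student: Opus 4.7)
The plan is to verify the three defining axioms of a metric on the total space $E$. Non-negativity is immediate since $\rho_\alpha(p(x),p(y))\ge0$ and every summand of $\cnorm{x;y}_\alpha$ is non-negative; symmetry is inherited from the symmetry of $\rho_\alpha$ on $\BRP^\alpha$ together with the manifestly symmetric form of each contribution to $\cnorm{\cdot;\cdot}_\alpha$ in its two arguments. Finiteness uses that if $x\in\mathscr{D}^\alpha_{p(x)}$ and $y\in\mathscr{D}^\alpha_{p(y)}$, the remainders $R^{x,h}$ and $R^{y,h}$ both lie in the real Banach space $C_2^{(N-|h|)\alpha}$, hence their difference has finite $(N-|h|)\alpha$-Hölder seminorm, and the sum over the finite set $\cF^<_{(N)}$ is finite.

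For the identity of indiscernibles, I would argue as follows. If $d^\flat_\alpha(x,y)=0$ then both non-negative summands vanish. The vanishing of $\rho_\alpha(p(x),p(y))$ forces $p(x)=p(y)=:\bX$, so that $x,y$ lie in the common fibre $\mathscr{D}^\alpha_\bX$. In this fibre the expression $\cnorm{x;y}_\alpha$ reduces exactly to the Banach norm $\cnorm{x-y}_\alpha$ introduced in \Cref{numba:brp}, whose vanishing forces $x=y$.

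For the triangle inequality, I would pick $x,y,z\in E$ and split $d^\flat_\alpha(x,z)$ along its two summands. The inequality $\rho_\alpha(p(x),p(z))\le\rho_\alpha(p(x),p(y))+\rho_\alpha(p(y),p(z))$ holds because $\rho_\alpha$ is already a metric on $\BRP^\alpha$. For the comparison term, the crucial observation is that although the three remainders $R^{x,h}$, $R^{y,h}$, $R^{z,h}$ are each built from a (possibly different) reference rough path, their pairwise differences are ordinary elements of the Banach space $C_2^{(N-|h|)\alpha}$, so the triangle inequality in that Banach space applies and yields $\|R^{x,h}-R^{z,h}\|_{(N-|h|)\alpha}\le\|R^{x,h}-R^{y,h}\|_{(N-|h|)\alpha}+\|R^{y,h}-R^{z,h}\|_{(N-|h|)\alpha}$. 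Combined with $|\langle h,x_0-z_0\rangle|\le|\langle h,x_0-y_0\rangle|+|\langle h,y_0-z_0\rangle|$ and summing over $h\in\cF^<_{(N)}$, this gives $\cnorm{x;z}_\alpha\le\cnorm{x;y}_\alpha+\cnorm{y;z}_\alpha$ and hence the triangle inequality for $d^\flat_\alpha$.

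There is no genuine obstacle in this verification; it is routine once the right observation is made. The only subtle point worth emphasising is that $\cnorm{\cdot;\cdot}_\alpha$ is \emph{not} itself a norm on any common ambient vector space when the base points $p(x)$ and $p(y)$ differ: the triangle inequality is inherited termwise from the Hölder seminorms applied to differences of remainders, each of which lives in a fixed scalar function space. This extrinsic feature — treating the remainders as members of a common scalar Banach space regardless of the reference rough path — is precisely what is captured by the terminology \emph{flat} metric.
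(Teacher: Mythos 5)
Your proof is correct and follows essentially the same route as the paper: the paper dismisses non-negativity, symmetry, and the triangle inequality as routine (exactly the termwise Banach-space argument you spell out) and records only the key step, namely that $d^\flat_\alpha(x,y)=0$ forces $p(x)=p(y)$, after which $\cnorm{x;y}_\alpha$ collapses to the fibre norm $\cnorm{x-y}_\alpha$. Your additional observation about the remainders living in the common scalar space $C_2^{(N-|h|)\alpha}$ is the same ``flatness'' point the paper makes in the surrounding discussion.
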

\begin{proof}
We only have to prove that $d^\flat_{\alpha}(x,y) = 0$ implies $x = y$. To see this, we first note that $d^\flat_{\alpha}(x,y) = 0$ implies $p(x) = p(y)$. In this case, $\cnorm{x;y}_{\alpha} = \cnorm{x - y}_{\alpha}$ and since $\cnorm{\cdot}_{\alpha}$ is a norm on the space of paths controlled by $p(x)$, the claim follows.
\end{proof}

\begin{prop}\label{prop:metricsequivalent}
  The flat metric $d^\flat_{\alpha}$ and the metric $d_E$ are topologically equivalent. In other words, for a sequence $(x_n)_{n\in \N} \subseteq E$ we have $d^\flat_{\alpha}(x_n,x) \to 0$ as $n \to \infty$ if and only if $d_E (x_n,x) \to 0$ as $n \to \infty$.
\end{prop}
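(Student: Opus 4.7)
I will prove the two implications separately for a sequence $(x_n)_{n\in\N}\subset E$ and candidate limit $x\in E$.

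For $d^\flat_\alpha(x_n,x)\to 0\Rightarrow d_E(x_n,x)\to 0$, I apply \Cref{lemma:convergence}: it suffices to verify that each pseudometric $d_{(\gamma_m,i,\delta_m)}(x_n,x)\to 0$. The bounds~\eqref{norm:triag}--\eqref{metric:triag} reduce this to controlling $\rho_\alpha(p(x_n),p(x))$ (an explicit summand of $d^\flat_\alpha$) and $\bigl|\|x_n-\gamma(p(x_n))\|_\beta-\|x-\gamma(p(x))\|_\beta\bigr|$. The cross-fiber triangle inequality for $\cnorm{\cdot;\cdot}_\beta$, which follows directly from~\eqref{norm:controlled} by splitting each remainder through an intermediate section, dominates the latter quantity by $\cnorm{x_n;x}_\beta+\cnorm{\gamma(p(x_n));\gamma(p(x))}_\beta$. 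The first summand vanishes because $\cnorm{x_n;x}_\alpha\to 0$ implies $\cnorm{x_n;x}_\beta\to 0$ via the elementary H\"older interpolation $\|R\|_\beta\le T^{\alpha-\beta}\|R\|_\alpha$; the second vanishes via \Cref{thm:lin.rho} applied to $\gamma=\Gamma_\bullet(f)$ for suitable $f\in\mathcal S_N^0$, combined with $\rho_\alpha(p(x_n),p(x))\to 0$.

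For the converse, continuity of the projection $p$ in the tube topology immediately yields $\rho_\alpha(p(x_n),p(x))\to 0$. For fiber convergence, given $\varepsilon>0$ I use density of $\Gamma_0(p(x))$ in the fiber (\Cref{cor:denseGamma}) to choose $\gamma\in\Gamma_0$ with $\|x-\gamma(p(x))\|_\beta<\varepsilon/3$, so that $x$ lies in the open tube $W(\gamma,\BRP^\alpha,\varepsilon/3)$. Tube convergence then forces $\|x_n-\gamma(p(x_n))\|_\beta<\varepsilon/3$ eventually, and combining this bound with $\cnorm{\gamma(p(x_n));\gamma(p(x))}_\beta\to 0$ (from \Cref{thm:lin.rho}) through the triangle inequality for $\cnorm{\cdot;\cdot}_\beta$ yields $\cnorm{x_n;x}_\beta<\varepsilon$ eventually.

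The main subtlety is the mismatch of H\"older exponents: the tube topology is generated by the fiber Banach norm $\cnorm{\cdot}_\beta$, whereas $d^\flat_\alpha$ invokes the stronger quasi-seminorm $\cnorm{\cdot;\cdot}_\alpha$. The forward direction is unaffected, since $\alpha$-convergence implies $\beta$-convergence. The converse direction, however, naturally only produces $\beta$-level fiber convergence; upgrading it to convergence in $\cnorm{\cdot;\cdot}_\alpha$ requires exploiting that every section in $\Gamma$ is a genuine $\alpha$-controlled path and that \Cref{thm:lin.rho} supplies the Lipschitz estimate already in the stronger $\alpha$-seminorm, so that the approximation step transporting $x$ to a section and back can be carried out at the $\alpha$-level. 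This interpolation is the step I expect to require the most care.
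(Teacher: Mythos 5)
Your overall strategy coincides with the paper's. The forward implication is handled exactly as in the text: you feed the two quantities \eqref{norm:triag} and \eqref{metric:triag} into \Cref{lemma:convergence}, dominate the first by $\cnorm{x_n;x}+\cnorm{\gamma(p(x_n));\gamma(p(x))}$ via the cross-fibre triangle inequality for \eqref{norm:controlled}, and kill both summands using convergence of the flat metric and \Cref{thm:lin.rho}; this half is complete. For the converse you also follow the paper's route (continuity of $p$ for the base points, density of $\Gamma_0(p(x))$ in the fibre, eventual membership of $x_n$ in a tube around a section $\gamma$, and \Cref{thm:lin.rho} to transport along $\gamma$), and your use of the single global tube $W(\gamma,\BRP^\alpha,\varepsilon/3)$ is in fact a slightly cleaner way to extract $\lVert x_n-\gamma(p(x_n))\rVert<\varepsilon/3$ than the paper's argument through the pseudometrics $d_{(\gamma,i,\delta)}$ and the convergence of the factor $r_i-\rho_\alpha(p(x_n),\bX_i)$.

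The genuine gap is the final step of the converse, which you explicitly flag but do not carry out: your argument delivers $\cnorm{x_n;x}_\beta\to 0$, whereas $d^\flat_\alpha$ is built from $\cnorm{\cdot;\cdot}_\alpha$. The mechanism you name for the upgrade --- interpolation, using that the sections are genuine $\alpha$-controlled paths --- does not work: from $\cnorm{x_n;x}_\beta\to0$ together with uniform bounds on the $\alpha$-seminorms one only obtains $\cnorm{x_n;x}_{\beta'}\to0$ for every $\beta'<\alpha$, never for $\beta'=\alpha$ (remainders oscillating at the critical H\"older scale give counterexamples). The exponent has to be fixed \emph{before} the approximation step, not after: the paper chooses $\gamma\in\Gamma_0$ with $\cnorm{x-\gamma(p(x))}_\alpha\le\varepsilon$ and measures tube membership in the $\alpha$-seminorm, so that every term in the three-fold triangle inequality is already an $\alpha$-quantity and no passage from $\beta$ to $\alpha$ is ever required. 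As written, your converse establishes topological equivalence of $d_E$ with $d^\flat_\beta$ rather than with $d^\flat_\alpha$; to prove the statement you must run the entire argument at the $\alpha$-level, which in particular means invoking density of $\Gamma_0(p(x))$ with respect to $\cnorm{\cdot}_\alpha$ rather than the $\cnorm{\cdot}_\beta$-density of \Cref{cor:denseGamma}.
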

\begin{proof}
    Assume first that $\lim_{n \rightarrow \infty }d^\flat_{\alpha} (x_n,x) = 0$. By definition this implies $\rho_\alpha (p(x_n),p(x)) \rightarrow 0$ and the term \eqref{metric:triag} converges to $0$. For \eqref{norm:triag} we find with the negative triangle inequality
    \begin{align*}
    |\lVert x-\gamma(p(x))\rVert - \lVert x_n - \gamma(p(x_n))\rVert| \leq |d^\flat_{\alpha}(x, \gamma(p(x))) - d^\flat_{\alpha} (x_n, \gamma(p(x_n)))\rVert| \leq d^\flat_{\alpha}(x,x_n).
    \end{align*}
    Hence convergence in $d^\flat_{\alpha}$ implies convergence of \eqref{norm:triag} and \eqref{metric:triag} for all choices of $d_{(\gamma, i ,\delta)}$. Thus \Cref{lemma:convergence} shows that convergence with respect to $d^\flat_{\alpha}$ implies convergence in $d_B$. 
    
    For the converse, assume that $d_E (x_n,x) \to 0$. Since the bundle projection $p$ is a continuous map, this implies $\rho_{\alpha}(p(x_n),p(x)) \to 0$ as $n \to \infty$.
    
    Now let $\varepsilon > 0$. For every $\gamma \in \Gamma$ (note that for the bundle over the geometric rough paths, we replace $\Gamma$ by the restrictions of its elements to the subspace of geometric rough paths),
    \begin{align*}
        \cnorm{x;x_n}_{\alpha} \leq \cnorm{x - \gamma(p(x))}_{\alpha} + \cnorm{\gamma(p(x));\gamma(p(x_n))}_{\alpha} + \cnorm{\gamma(p(x_n) - x_n}_{\alpha}.
    \end{align*}
    We can choose $\gamma \in \Gamma_0$ such that $\cnorm{x - \gamma(p(x))}_{\alpha} \leq \varepsilon$. Continuity of the rough integral implies that we can find $M_1 > 0$ such that $\cnorm{\gamma(p(x));\gamma(p(x_n))}_{\alpha} \leq \varepsilon$ for every $n \geq M_1$. Choose an open neighborhood $U_i = B_{r_i} (\mathbf{X}_i)$ as in the definition of the Nagata-Smirnov metric (where $0 < r_i < 1$ and $\bX$ is a branched rough path for the bundle over the branched rough paths, otherwise we can pick a geometric rough path) such that $p(x) \in B_{r_i/2}(\mathbf{X}_i)$ and $p(x_n) \in B_{r_i}(\mathbf{X}_i)$ for every $n \in \N$ large enough. Since $x \in W(\gamma,B_{r_i}(\mathbf{X}_i),2\varepsilon)$, we must have $x_n \in W(\gamma,B_{r_i}(\mathbf{X}_i),2\varepsilon)$ for all $n \geq M_2$ and some $M_2$. We already know that $\rho_{\alpha}(p(x_n),p(x)) \to 0$ as $n \to \infty$ which implies that
    \begin{align*}
        (r - \rho_{\alpha}(p(x_n),\mathbf{X})) \to (r - \rho_{\alpha}(p(x),\mathbf{X}))
    \end{align*}
    as $n \to \infty$. Since $d_{(\gamma,i,\delta)}(x,x_n) \to 0$ for every rational $\delta >0$ as $n \to \infty$, this implies that
    \begin{align*}
        \| x_n - \gamma(p(x_n)) \|_{\alpha} \to \| x - \gamma(p(x)) \|_{\alpha}
    \end{align*}
    as $n \to \infty$. Since $\| x - \gamma(p(x)) \|_{\alpha} \leq \varepsilon$, we can conclude that $\| x_n - \gamma(p(x_n)) \|_{\alpha}  \leq 2 \varepsilon$ for $n \geq M_3$. Therefore, we have shown that for $n \geq M := \max\{M_1,M_2,M_3\}$,
    \begin{align*}
        \cnorm{x;x_n}_{\alpha} \leq 4 \varepsilon.
    \end{align*}
    Since $\varepsilon > 0$ was arbitrary, this concludes the proof.
\end{proof}

\begin{remark}\label{rem:equivalence}
In light of the equivalence of the Nagata-Smirnov metric and the flat metric, it is not hard to see that the Banach bundle structure can also be established using the flat metric. Hence from the flat metric, we could have constructed the Banach bundle and the associated continuous field. Now the reader may wonder whether the constructions in \cite{FaH20} (for level $2$-rough paths) would not lead to a direct proof of the articles results so far. While we can recover the continuous field by the general result, the same is not true for the pre-field of Banach spaces constructed through the approximation arguments. The point is that by working with the algebraic structure and the smaller set of sections from the pre-field our arguments yield estimates and control which can not (without further arguments) be deduced from the flat metric and the (non-canonical) homeomorphism with a trivial Banach bundle introduced in \cite{FaH20}.
\end{remark}
Note that the equivalence of the metrics is quite weak as they only induce the same topology. There are stronger notions of metric equivalence which ensure that properties established for one metric also carry over to the other. We will briefly discuss this now.
\begin{remark}
Topologically equivalent metrics do not need to have the same Cauchy-sequences (whence completeness of one metric is not automatically inherited by a topologically equivalent metric). For this, \emph{uniform equivalence of metrics} is sufficient. This means that the identity map $\id_X \colon (X,d_1) \rightarrow (X,d_2)$ and $\id_X \colon (X,d_2) \rightarrow (X,d_1)$ is uniformly continuous (and not just continuous as in topological equivalence). Unfortunately, without additional properties on the open cover uniform estimates of one metric against the other seem to be out of reach. The problem lies in the construction of the functions $d_m$ which are defined as sums of pseudometrics. Locally, every $d_m$ is a finite sum of such pseudometrics. However, since the number of summands can potentially be unbounded over the whole space (this depends on the chosen cover of the space of rough paths), this makes a uniform estimate impossible without further knowledge.  
\end{remark}

Consequently, we were only able to establish the completeness for the flat metric (completeness for the Nagata-Smirnov metric runs into a conceptually similar problem as a proof of the uniform equivalence). Note however, that the proof leverages the topological equivalence with the Nagata-Smirnov metric.

\begin{theorem}\label{thm:total_space_polish}
 Let $(E,d_\alpha^\flat)$ be the total space of the Banach bundle \eqref{branchedbundle} or \eqref{geometricbundle}, then $E$ is complete. In particular, the space $(E^{\alpha,\beta}_g,d_\alpha^\flat)$ is a Polish space while $(E^{\alpha,\beta},d_\alpha^\flat)$ is a complete metric space. 
\end{theorem}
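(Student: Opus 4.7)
The plan is to prove completeness of $(E, d^\flat_\alpha)$ directly, so that the Polish statement for $E^{\alpha,\beta}_g$ follows by combining completeness with the separability established in \Cref{lem:metricstruct1}. Let $(x_n)$ be a $d^\flat_\alpha$-Cauchy sequence and write $\bX_n \coloneq p(x_n)$, $\bZ_n \coloneq x_n$ with components $Z_n^h$ and remainders $R_n^h$. By construction of the flat metric, $(\bX_n)$ is $\rho_\alpha$-Cauchy; since the character identity, Chen's relation, and $|h|\alpha$-Hölder regularity are preserved under $\rho_\alpha$-limits (each coordinate lives in the Banach space $C_2^{|h|\alpha}$, and the algebraic constraints are closed under pointwise convergence), the space $\BRP^\alpha$ is complete and $\bX_n \to \bX$ for some $\bX$; the closedness of $\BRP^\alpha_g$ handles the geometric case.

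Cauchyness in the second summand gives Cauchyness of the initial values $\langle h, \bZ_n(0)\rangle$ in $\R$ with limit $z_0^h$, and of $R_n^h$ in $C_2^{(N-|h|)\alpha}$ with limit $R^h$. I then construct the candidate limit $\bZ$ by descending recursion on $|h|$. For $h \in \cF_{N-1}$ the controlled path equation reduces to $\delta Z_n^h = R_n^h$, so $\delta R_n^h = 0$; pointwise passage to the limit yields $\delta R^h = 0$, and $Z^h(t) \coloneq z_0^h + R^h(0,t)$ satisfies $\delta Z^h = R^h$ with $Z_n^h \to Z^h$ uniformly. For $|h| < N-1$, evaluating the controlled path relation at $s=0$ gives
\[
  Z_n^h(t) = Z_n^h(0) + \sum_{\bar h \in \cF^+_{(N-|h|-1)}} Z_n^{\bar h \star h}(0)\, \bX_n^{\bar h}(0,t) + R_n^h(0,t),
\]
and the inductive hypothesis (uniform convergence of higher-degree components) together with uniform convergence of $\bX_n^{\bar h}$ and $R_n^h$ defines $Z^h$ as the uniform limit. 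Passing to the pointwise limit in
\[
  R_n^h(s,t) = \delta Z_n^h(s,t) - \sum_{\bar h} Z_n^{\bar h \star h}(s)\, \bX_n^{\bar h}(s,t)
\]
then verifies that $\bZ = (Z^h)_h$ is controlled by $\bX$ with remainders equal to the Banach limits $R^h \in C_2^{(N-|h|)\alpha}$. Hence $\bZ \in \mathscr{D}^\alpha_\bX \subseteq \mathscr{D}^{\alpha,\beta}_\bX$, and $\cnorm{\bZ_n;\bZ}_\alpha \to 0$ combined with $\rho_\alpha(\bX_n, \bX) \to 0$ yields $d^\flat_\alpha(x_n,\bZ) \to 0$.

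The main subtlety is the identification of the two natural candidates for the limit remainder: the Banach-space limit of $R_n^h$ and the remainder intrinsically attached to the limit $\bZ$ against $\bX$. Both agree precisely because the recursive construction forces every factor appearing in the bilinear defining expression to converge in a topology compatible with pointwise evaluation, which is exactly what is needed to commute the sum with the limit. Once completeness of both $E^{\alpha,\beta}$ and $E^{\alpha,\beta}_g$ is in hand, the separability of $E^{\alpha,\beta}_g$ from \Cref{lem:metricstruct1} gives the Polish property; the total space $E^{\alpha,\beta}$ remains a complete metric space but fails separability, exactly as claimed.
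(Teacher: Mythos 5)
Your proposal is correct and follows essentially the same route as the paper: converge the basepoints using completeness of $\BRP^\alpha$ (closedness of $\BRP^\alpha_g$ for the geometric case), converge the fibre data (initial values and remainders) from the Cauchy property of the flat metric, reassemble the limit path, and verify that the Hölder-limit remainders coincide with the remainders of the limit path against the limit rough path. Your explicit descending recursion on $|h|$ to obtain uniform convergence of the components is a detail the paper's proof asserts without elaboration, but it is the same argument in substance.
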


\begin{proof}
In view of \Cref{lem:metricstruct1}, we know that the topological space is separable only for the bundle $(E^{\alpha,\beta}_{g},d_E)$. Now the Nagata-Smirnov metric and the flat metric are topologically equivalent by \Cref{prop:metricsequivalent}, whence the (non-)separability is inherited by the flat metric. Thus we only have to establish completeness of the space $(E,d^\flat_\alpha)$. For this we consider a Cauchy-sequence $(x_n)_{n \in \N}$ with respect to the flat metric. Recall from \Cref{defn:flatmetric} that being Cauchy in the flat metric implies that the sequence $(p(x_n))_{n\in \N}$ of basepoints must be a Cauchy-sequence with respect to the metric $\rho_\alpha$. Now the space of branched (respectively geometric) rough paths is complete with respect to $\rho_\alpha$, whence there exists a rough path $\bX$ such that $\lim_{n \rightarrow \infty} p(x_n) = \bX$. Furthermore, \Cref{defn:flatmetric} implies that for every $h \in \mathcal{F}^{<}_{(N)}$, the series $(\langle h, x_n \rangle)_n$ is a Cauchy sequence with respect to the uniform topology. By completeness, there exist continuous paths $x^h$ such that $\langle h, x_n \rangle \to x^h$ uniformly. Define $\mathbf{Z} \colon [0,T] \to \mathcal{H}^{<}_{(N)}$ by $\langle h, \mathbf{Z}_t \rangle = x^h_t$. We show that $\mathbf{Z}$ is controlled by $\mathbf{X}$. Define
\begin{align*}
    R^{h;n}_{s,t} &:= \langle h, x_n(t) \rangle - \langle p(x_n)_{s,t} \star h, x_n(s) \rangle \quad \text{and} \\ 
    R^{h}_{s,t} &:= \langle h, \mathbf{Z}_t \rangle - \langle \mathbf{X}_{s,t} \star h, \mathbf{Z}_s \rangle.
\end{align*}
Let $\varepsilon > 0$. Since $(x_n)_n$ is a Cauchy sequence w.r.t. $d^\flat_\alpha$, there is an $M$ such that
\begin{align*}
    \frac{|R^{h;n}_{s,t} - R^{h;m}_{s,t}|}{|t-s|^{(N - |h|)\alpha}} \leq \varepsilon 
\end{align*}
for every $s \neq t$ and every $n,m \geq M$. Letting $m \to \infty$, pointwise convergence implies that
\begin{align*}
    \|R^{h;n} - R^h\|_{(N - |h|)\alpha} \leq \varepsilon
\end{align*}
for $n \geq M$. This implies that $\mathbf{Z}$ is indeed controlled by $\mathbf{X}$ and that $d^\flat_\alpha(x_n,\mathbf{Z}) \to 0$ as $n \to \infty$ which proves completeness.
\end{proof}

It is now easy to recast stability results from the theory of rough paths in the language of Banach bundles (or equivalently continuous fields of Banach spaces).

\begin{prop}
Let again $E$ be the total space of the Banach bundle \eqref{branchedbundle} or \eqref{geometricbundle}. Then the integration map 
$$\mathfrak{I} \colon E \rightarrow E, \mathbb{Y} \mapsto \int \mathbb{Y} \mathrm{d}\bX, \text{ where } \bX = p(\mathbb{Y}).$$ 
is a morphism of Banach bundles over the identity, i.e.\ $\mathfrak{I}$ is continuous, fibrewise linear and projects via the bundle projection down to the identity.
\end{prop}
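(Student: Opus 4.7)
The plan is to verify the three defining properties of a bundle morphism over the identity separately: that $p\circ\mathfrak I=p$, that $\mathfrak I$ restricts to a linear map on each fibre, and that $\mathfrak I$ is continuous with respect to the Banach bundle topology. The first two are essentially built into the construction, while the third is the substantive point.

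For the projection and fibrewise linearity, note that by \Cref{prp:int.bounded} the integral $\mathfrak I^a_{\bX}(\bZ)$ belongs to $\D{\alpha}{\bX}$ whenever $\bZ\in\D{\alpha}{\bX}$, so that the fibre over $\bX$ is preserved; consequently $p\circ \mathfrak I=p$. Linearity of $\bZ\mapsto \mathfrak I^a_{\bX}(\bZ)$ for fixed $\bX$ follows at once from the definition of the rough integral in \Cref{goodintegrands} as a limit of Riemann-type sums that depend linearly on the coordinates $Z^h$ of the integrand.

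For continuity, I would first invoke \Cref{prop:metricsequivalent}, which says that the intrinsic Nagata-Smirnov metric of the Banach bundle induces the same topology as the extrinsic flat metric $d^\flat_\alpha$. It therefore suffices to prove sequential continuity with respect to $d^\flat_\alpha$. Let $(\mathbb Y_n)_n\subset E$ satisfy $d^\flat_\alpha(\mathbb Y_n,\mathbb Y)\to 0$, and set $\bX_n\coloneq p(\mathbb Y_n)$, $\bX\coloneq p(\mathbb Y)$. By \Cref{defn:flatmetric} this unfolds to $\rho_\alpha(\bX_n,\bX)\to 0$ together with $\cnorm{\mathbb Y_n;\mathbb Y}_\alpha\to 0$; in particular the sequences $(\cnorm{\bX_n}_\alpha)_n$ and $(\cnorm{\mathbb Y_n}_\alpha)_n$ are bounded by some common constant $M$. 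An application of \Cref{cont:int_map} then yields, for each label $a\in A$,
\[
  \cnorm{\mathfrak I^a_{\bX_n}(\mathbb Y_n);\mathfrak I^a_{\bX}(\mathbb Y)}_\alpha\le C\bigl(\cnorm{\mathbb Y_n;\mathbb Y}_\alpha+\rho_\alpha(\bX_n,\bX)\bigr)\longrightarrow 0,
\]
and combining this with $\rho_\alpha(\bX_n,\bX)\to 0$ gives $d^\flat_\alpha(\mathfrak I(\mathbb Y_n),\mathfrak I(\mathbb Y))\to 0$, as required.

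The principal technical burden of this argument lies entirely in the supporting results that are already in place: the sharp local-Lipschitz bound of \Cref{cont:int_map} for the rough integral at arbitrary branched level, and the topological equivalence of the intrinsic Nagata-Smirnov metric with the extrinsic flat metric (\Cref{prop:metricsequivalent}). Once these are granted, the morphism property of $\mathfrak I$ reduces to an unwinding of definitions, and no further delicate estimates are needed.
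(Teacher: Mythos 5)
Your proposal is correct and follows essentially the same route as the paper: fibre preservation and linearity from the construction of the rough integral, and continuity by passing to the flat metric via \Cref{prop:metricsequivalent} and then invoking the local Lipschitz bound of \Cref{cont:int_map}. Your remark that the convergent sequence supplies the uniform bound $M$ needed to apply \Cref{cont:int_map} is a detail the paper leaves implicit, but the argument is the same.
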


\begin{proof}
Let us note first that due to \Cref{goodintegrands} the integral of a controlled path is controlled again, whence the integration map makes sense and respect the fibres of the bundle. Since integration is linear in the integrand, we see that $\mathfrak{I}$ is fibre-wise linear. Now continuity with respect to the Banach bundle topology is equivalent to continuity with respect to $d_\alpha^\flat$ and this follows immediately from \Cref{cont:int_map}.
\end{proof}

Having established the integration map as a morphism of Banach bundles, we turn to the other prominent mapping in the setting, the It\^{o}-Lyons map which assigns to a rough differential equation its solution as a controlled rough path.

For this let us recall first that the definition depends on the choice of vector fields \(f_1,\dotsc,f_d\in C^\infty(\R^n,\R^n)\).
Following \cite{HK2015}, given \(\varphi\colon\R^n\to\R^n\) and \(v_1,\dotsc,v_m\in\R^n\) we denote
\[
  D^m\varphi(y):(v_1,\dotsc,v_m)\coloneq\sum_{\alpha_1,\dotsc,\alpha_m=1}^n\frac{\partial^m}{\partial
  y_{\alpha_1}\dotsm\partial y_{\alpha_m}}\varphi(y)v_1^{\alpha_1}\dotsm v_m^{\alpha_m}.
\]
Define the \emph{elementary differentials} \(f_\tau\in C^\infty(\R^n,\R^n)\) for \(\tau\in\cT\) recursively by \( f_{\bm1}(y)=y\) and
\[
  f_{[\tau_1\dotsm\tau_m]_i}\coloneq D^mf_i:(f_{\tau_1},\dotsc,f_{\tau_m}).
\]
\begin{lemma}\label{lem:Dn.tree}
  Let \(\tau,\rho_1,\dotsc,\rho_n\in\cT\), \(n\ge1\). Then
  \[
    D^nf_\tau:(f_{\rho_1},\dotsc,f_{\rho_n})=f_{\tau\curvearrowleft\rho_1\dotsb\rho_n}
  \]
\end{lemma}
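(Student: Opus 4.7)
The plan is to induct on the number of nodes \(|\tau|\) of the base tree, with the claim quantified uniformly over all \(n\geq 1\) and all trees \(\rho_1,\dotsc,\rho_n\in\cT\). For the base case \(|\tau|=1\), writing \(\tau=\Forest{[i]}\) for some \(i\in A\) gives \(f_\tau=f_i\), so both sides reduce to \(f_{[\rho_1\dotsm\rho_n]_i}\) directly from the recursive definitions of the elementary differential and of grafting a forest onto a single-node tree.

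For the inductive step I would write \(\tau=[\sigma_1\dotsm\sigma_m]_i\) with every \(|\sigma_j|<|\tau|\), so that \(f_\tau=D^mf_i:(f_{\sigma_1},\dotsc,f_{\sigma_m})\) by definition. Next I would apply the multilinear Leibniz rule to expand \(D^nf_\tau:(f_{\rho_1},\dotsc,f_{\rho_n})\) as a sum indexed by ordered partitions \((S_0,S_1,\dotsc,S_m)\) of \(\{1,\dotsc,n\}\):
\[
  \sum_{(S_0,\dotsc,S_m)} D^{m+|S_0|}f_i:\bigl((f_{\rho_k})_{k\in S_0},\,D^{|S_1|}f_{\sigma_1}:(f_{\rho_k})_{k\in S_1},\,\dotsc,\,D^{|S_m|}f_{\sigma_m}:(f_{\rho_k})_{k\in S_m}\bigr),
\]
where \(S_0\) collects those indices \(k\) for which the derivative falls on the outer factor \(f_i\) (attaching \(\rho_k\) at the root of \(\tau\)) and \(S_j\) for \(j\geq 1\) collects those that hit the inner factor \(f_{\sigma_j}\) (so eventually land somewhere inside \(\sigma_j\)). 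The induction hypothesis applied to each \(\sigma_j\) rewrites \(D^{|S_j|}f_{\sigma_j}:(f_{\rho_k})_{k\in S_j}=f_{\sigma_j\curvearrowleft\prod_{k\in S_j}\rho_k}\); the outer application of \(D^{m+|S_0|}f_i\) then folds each summand back into an elementary differential by the recursive definition of \(f_{[\cdots]_i}\), yielding
\[
  f_{[\prod_{k\in S_0}\rho_k\cdot(\sigma_1\curvearrowleft\prod_{k\in S_1}\rho_k)\dotsm(\sigma_m\curvearrowleft\prod_{k\in S_m}\rho_k)]_i}.
\]

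To finish, I would identify the resulting sum over ordered partitions with the sum-over-graftings definition of the extended operator \(\curvearrowleft\colon\cT\otimes\cH\to\cT\) applied to the forest \(\rho_1\dotsm\rho_n\): each partition encodes a function \(\phi\colon\{1,\dotsc,n\}\to V(\tau)\) sending \(k\in S_0\) to the root node of \(\tau\) and each \(k\in S_j\) (\(j\geq 1\)) to some node of \(\sigma_j\); the in-subtree choice is already accounted for by the graftings \(\sigma_j\curvearrowleft\prod_{k\in S_j}\rho_k\) supplied by the induction hypothesis. Collecting the folded summands therefore produces exactly \(f_{\tau\curvearrowleft\rho_1\dotsm\rho_n}\).

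The main obstacle is the combinatorial bookkeeping: carefully verifying that the ordered-partition sum produced by the Leibniz rule matches, term by term, the definition of grafting a forest onto a tree, and that the outer factor \(D^{m+|S_0|}f_i\) folds each piece into a single elementary differential rather than introducing spurious symmetry factors. Once this correspondence is set up cleanly (it is essentially the symmetric brace axiom from \Cref{numba:pre-Lie} transported via \(\tau\mapsto f_\tau\)), the identity follows from the definitions, and the case \(n=1\) in fact already encapsulates the pre-Lie identity \(Df_\tau:f_\rho=f_{\tau\curvearrowleft\rho}\) from which the general \(n\) may alternatively be obtained by a second induction.
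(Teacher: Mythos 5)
Your proof is correct, but it follows a genuinely different route from the paper's. The paper first establishes the case \(n=1\) by induction on \(|\tau|\) (using the first-order product rule and the tree identity \([\tau_1\dotsm\tau_m\rho]_i+\sum_k[\tau_1\dotsm(\tau_k\curvearrowleft\rho)\dotsm\tau_m]_i=[\tau_1\dotsm\tau_m]_i\curvearrowleft\rho\)), and then handles general \(n\) by a \emph{second} induction on \(n\), writing \(D^{n+1}f_\tau:(f_{\rho_1},\dotsc,f_{\rho_n},f_\rho)=Df_{\tau\curvearrowleft\rho_1\dotsm\rho_n}:f_\rho-\sum_kD^nf_\tau:(f_{\rho_1},\dotsc,Df_{\rho_k}f_\rho,\dotsc,f_{\rho_n})\) and invoking the brace relation \eqref{eq:arrow.star} to collapse the result. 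You instead run a single induction on \(|\tau|\) with the claim quantified over all \(n\), using the full multilinear Leibniz rule to produce the ordered-partition sum and then matching it directly against the sum-over-node-assignments definition of \(\curvearrowleft\colon\cT\otimes\cH\to\cT\) via the decomposition \(V(\tau)=\{\mathrm{root}\}\sqcup V(\sigma_1)\sqcup\dotsb\sqcup V(\sigma_m)\). Both arguments are sound: your expansion carries no spurious multiplicities because the directions \(f_{\rho_1},\dotsc,f_{\rho_n}\) occupy distinguishable slots, and the contraction vectors are not themselves differentiated, so a derivative landing on the outer factor simply appends an argument to \(D^{m+\cdot}f_i\). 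The trade-off is that the paper pushes the combinatorics into the already-established symmetric brace identity and keeps the analysis at first order, whereas you front-load the combinatorial bookkeeping into the Leibniz expansion but avoid the second induction and any appeal to \eqref{eq:arrow.star}; as you note yourself, your \(n=1\) case recovers the pre-Lie identity from which the paper's route proceeds.
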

\begin{proof}
  We first show the statement for the case \(n=1\) by induction on \(|\tau|\).
  The case when \(|\tau|=1\), i.e., when \(\tau=\Forest{[i]}\) follows from the defintion.
  Indeed,
  \[
    Df_i(y)f_\rho(y)=f_{[\rho]_i}(y)=f_{\Forest{[i]}\curvearrowleft\rho}(y).
  \]
  Now, if \(|\tau|=n+1\) there are \(\tau_1,\dotsc,\tau_m\in\cT\) with \(|\tau_1|+\dotsb+|\tau_m|=n\) and some label
  \(i\in\{1,\dotsc,d\}\), such that \(\tau=[\tau_1\dotsc\tau_m]_i=\Forest{[i]}\curvearrowleft\tau_1\dotsm\tau_m\).
  Hence, by the chain rule
  \begin{align*}
    Df_\tau:f_\rho&= D^{m+1}f_i:(f_{\tau_1},\dotsc,f_{\tau_m},f_{\rho})+\sum_{k=1}^mD^mf_i:(f_{\tau_1},\dotsc,Df_{\tau_k}f_\rho,\dotsc,f_{\tau_m})\\
    &=
    f_{[\tau_1\dotsm\tau_m\rho]_i}+\sum_{k=1}^mf_{[\tau_1\dotsm(\tau_k\curvearrowleft\rho)\dotsm\tau_m]_i}.
  \end{align*}

  Whence we conclude by noting that, at the level of trees, the identity
  \[
    [\tau_1\dotsm\tau_m\rho]_i+\sum_{k=1}^m[\tau_1\dotsm(\tau_k\curvearrowleft\rho)\dotsm\tau_m]_i=\Forest{[i]}\curvearrowleft(\tau_1\dotsm\tau_m\star\rho)=[\tau_1\dotsm\tau_m]_i\curvearrowleft\rho
  \]
  holds.

  Now, for the case \(n>1\), we note that by \cref{eq:arrow.star}
  \begin{align*}
    D^{n+1}f_\tau:(f_{\rho_1},\dotsc,f_{\rho_n},f_\rho)&=
    Df_{\tau\curvearrowleft\rho_1\dotsm\rho_n}:f_\rho-\sum_{k=1}^nD^nf_\tau:(f_{\rho_1},\dotsc,Df_{\rho_k}f_\rho,\dotsc,f_{\rho_n})\\
    &=
    f_{(\tau\curvearrowleft\rho_1\dotsm\rho_n)\curvearrowleft\rho}-\sum_{k=1}^nf_{\tau\curvearrowleft(\rho_1\dotsm(\rho_k\curvearrowleft\rho)\dotsm\rho_n)}\\
    &= f_{\tau\curvearrowleft\rho_1\dotsm\rho_n\rho}.\qedhere
  \end{align*}
\end{proof}

Given a branched rough path \(\bX\) and vector fields \(f_1,\dotsc,f_d\colon\R^n\to\R^n\), we say that \(Y\) solves the
Rough Differential Equation (RDE)
\[
  \mathrm dY_t=\sum_{i=1}^df_i(Y_t)\,\mathrm dX_t^i
\]
if it has the local expansion
\begin{equation}
\label{eq:sol.exp}
  \delta Y_{s,t}=\sum_{\tau\in\cT_{(N)}}\frac{1}{\sigma(\tau)}f_\tau(Y_s)\bX^\tau_{s,t}+r_{s,t},
\end{equation}
with \(r\in C_2^{(N+1)\alpha}\), where we recall that \(N=\lfloor\alpha^{-1}\rfloor\) so that \(N\alpha\le 1<(N+1)\alpha\), and
the path
\[
  \bY_t\coloneq \sum_{\tau\in\cT_{(N)}\cup\{\bm1\}}\frac{1}{\sigma(\tau)}f_\tau(Y_t)\tau
\]
belongs to \(\D{\alpha}{\bX}\).

It can be shown \cite{Gub10} that if \(f_i\in C_b^{N+1}\) then foreach \(\xi\in\R^n\), the RDE has a unique solution
\(\bY\coloneq\Phi(\xi,\bX)\) starting from \(Y_0=\xi\).
The map \(\Phi\colon\R^n\times \BRP^\alpha \to \bigsqcup_{\bX \in \text{BRP}^\alpha} \mathcal{D}^{\alpha}_\bX, (\xi,\bX) \mapsto \Phi (\xi,\bX)\) is known as the Itô-Lyons map.

\begin{prop}\label{thm:ItoLyonscts}
  The It\^{o}-Lyons map satisfies the bound
  \begin{align}\label{bound:ITOLYONS}
    \cnorm{\Phi(\xi,\bX);\Phi(\tilde\xi,\tilde\bX)}_\alpha\le C(|\xi-\tilde\xi|+\rho_\alpha(\bX,\tilde\bX)).
  \end{align}
 Hence, the It\^{o}-Lyons map is a parameter dependent continuous crossection of the Banach bundle $(E^{\beta,\alpha}_{\text{BRP}},p,\text{BRP}^\alpha)$ 
\end{prop}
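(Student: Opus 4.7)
The plan is to first establish the quantitative bound \eqref{bound:ITOLYONS} by a fixed-point argument on small time intervals, and then deduce the continuous cross-section property directly. The overall strategy mirrors standard rough path arguments (cf.\ \cite{Gub10,FaH20}) but now in full generality for branched rough paths of arbitrary order, leveraging the sharp estimates obtained in \Cref{prp:int.bounded,cont:int_map}.

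\emph{Step 1: Reformulate the RDE as a fixed-point problem.} Given the expansion \eqref{eq:sol.exp} together with \Cref{lem:Dn.tree}, any solution must satisfy $\langle\tau,\bY_t\rangle = \tfrac{1}{\sigma(\tau)} f_\tau(Y_t)$ for $\tau\in\cT_{(N)}\cup\{\bm1\}$. This identity defines a nonlinear composition operator $F^i\colon\D\alpha\bX\to\D\alpha\bX$ sending a controlled path $\bZ$ to the controlled path whose tree components are the elementary differentials $f_{[\tau]_i}$ composed with $Z^{\bm1}$. The elementary differential calculus in \Cref{lem:Dn.tree} ensures that $F^i(\bZ)$ is indeed controlled by $\bX$, with the tree components of $F^i(\bZ)$ matching those required by \cref{eq:sol.exp} for $[\cdot]_i$. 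The Itô–Lyons map is then obtained as the unique fixed point of $\Psi_{\xi,\bX}(\bZ)\coloneq\xi\bm1+\sum_{i=1}^d\mathfrak I_\bX^i(F^i(\bZ))$ on a suitable ball in $\D\alpha\bX$ over a short interval $[0,T_0]$. Using \Cref{prp:int.bounded} and the fact that the time-weights in $\cnorm{\cdot}_\alpha$ contribute a factor $T_0^\delta$ for some $\delta>0$ when compositions are involved, $\Psi_{\xi,\bX}$ becomes a contraction provided $T_0=T_0(\cnorm{\bX}_\alpha,\|f\|_{C_b^{N+1}})$ is chosen small enough.

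\emph{Step 2: Local Lipschitz bound.} Set $\bY\coloneq\Phi(\xi,\bX)$, $\tilde\bY\coloneq\Phi(\tilde\xi,\tilde\bX)$, and write
\[
\bY-\tilde\bY=(\xi-\tilde\xi)\bm1+\sum_{i=1}^d\bigl[\mathfrak I_\bX^i(F^i(\bY))-\mathfrak I_{\tilde\bX}^i(F^i(\tilde\bY))\bigr].
\]
Applying \Cref{cont:int_map} fibrewise and using an analogous stability estimate for the composition operator (namely $\cnorm{F^i(\bY);F^i(\tilde\bY)}_\alpha\le CT_0^\delta\cnorm{\bY;\tilde\bY}_\alpha+C(|\xi-\tilde\xi|+\rho_\alpha(\bX,\tilde\bX))$, proved by Taylor-expanding $f_\tau$ and controlling the remainder $R^{F^i(\bY),\tau}-R^{F^i(\tilde\bY),\tau}$ tree-by-tree) yields
\[
\cnorm{\bY;\tilde\bY}_\alpha\le C\bigl(|\xi-\tilde\xi|+\rho_\alpha(\bX,\tilde\bX)\bigr)+CT_0^\delta\cnorm{\bY;\tilde\bY}_\alpha.
\]
For $T_0$ small (depending only on $M$ and the vector field norms) the self-term absorbs into the left-hand side and we obtain the local bound \eqref{bound:ITOLYONS} on $[0,T_0]$.

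\emph{Step 3: Global assembly and cross-section property.} We iterate the local estimate over $\lceil T/T_0\rceil$ contiguous subintervals. At each gluing point $T_k$, the ``initial condition'' for the next step is $Y_{T_k}$ versus $\tilde Y_{T_k}$, whose difference is controlled by the preceding estimate in terms of $|\xi-\tilde\xi|+\rho_\alpha(\bX,\tilde\bX)$; crucially, $T_0$ can be chosen uniformly using the a priori bound $\cnorm{\bX}_\alpha\vee\cnorm{\tilde\bX}_\alpha\le M$. Summing the geometric iteration produces \eqref{bound:ITOLYONS} on $[0,T]$ with a constant $C=C(\alpha,M,T,\|f\|_{C_b^{N+1}})$. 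Once \eqref{bound:ITOLYONS} holds, the cross-section property is immediate: by construction $p\circ\Phi(\xi,\cdot)=\id_{\BRP^\alpha}$, and since \eqref{bound:ITOLYONS} controls precisely the second summand in $d_\alpha^\flat(\Phi(\xi,\bX),\Phi(\tilde\xi,\tilde\bX))=\rho_\alpha(\bX,\tilde\bX)+\cnorm{\Phi(\xi,\bX);\Phi(\tilde\xi,\tilde\bX)}_\alpha$, joint continuity in $(\xi,\bX)$ with respect to the flat metric (equivalently, the tube topology by \Cref{prop:metricsequivalent}) follows at once.

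\emph{Main obstacle.} The technical heart of the argument is the composition estimate for $F^i$ in the norm $\cnorm{\,\cdot\,;\,\cdot\,}_\alpha$ \emph{across different base rough paths}. Unlike the linear operator $\mathfrak I_\bX^i$, the map $F^i$ is genuinely nonlinear and its tree components involve nested elementary differentials; one must carefully track, for each $\tau\in\cT_{(N)}$, how the remainder $R^{F^i(\bZ),\tau}$ depends simultaneously on $\bZ$ and on $\bX$ (the latter entering implicitly through the controlledness condition of $\bZ$). Establishing this requires a multi-variable Taylor expansion of $f_\tau$ combined with the pre-Lie identities from \Cref{numba:pre-Lie}, together with an induction on the tree degree analogous to the one used in \Cref{lem:gamma.lin.bound,thm:lin.rho}. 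All other steps are then routine consequences of the estimates already established in \Cref{sec:brp,sec:main}.
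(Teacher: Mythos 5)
Your proposal is correct and follows essentially the same route as the paper: both reformulate the RDE as the fixed point $\bY=\bY_0+\mathfrak I_\bX(f(\bY))$, apply \Cref{cont:int_map}, and reduce everything to a stability estimate for the composition operator via Taylor expansion of the elementary differentials $f_{[h]}$ with a recursion over tree degrees. The only difference is presentational: you make explicit the time-localization, absorption of the self-term $\cnorm{\bY;\tilde\bY}_\alpha$ via a small factor $T_0^\delta$, and the subsequent gluing over subintervals, whereas the paper compresses this into the remark that ``the argument is closed recursively''; your version is the more careful rendering of the same argument.
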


Before proving the theorem, we recall the following result from \cite{Gub10,HK2015},
\begin{lemma}
  Let \(\bX\in\BRP^\alpha\) and \(\bZ\in\D\alpha\bX\). If \(\varphi\colon\R^n\to\R^n\) is a function of class \(C^N\)
  then the path \(\varphi(\bZ)\colon[0,T]\to\cH_{(N)}^{<}\) defined by \(\langle\bm1,
  \varphi(\bZ)_t\rangle\coloneq\varphi(Z_t)\) and
  \[
    \langle h,\varphi(\bZ)_t\rangle\coloneq\sum_{k=1}^{N-1}\sum_{\substack{f_1,\dotsc,f_k\in\cF\\f_1\dotsm
    f_k=h}}\frac{1}{k!}D^k\varphi(Z_t)(Z_t^{f_1},\dotsc,Z_t^{f_k})
  \]
  also belongs to \(\D\alpha\bX\).
  \label{lem:controlled.f}
\end{lemma}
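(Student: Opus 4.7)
The plan is to verify the defining controlled-path inequality component-by-component. Set $Y^h_t := \langle h,\varphi(\bZ)_t\rangle$ for $h\in\cF^<_{(N)}$. The task is to show that the candidate remainder
$$\tilde R^h_{s,t}:=\delta Y^h_{s,t}-\sum_{\bar h\in\cF^+_{(N-|h|-1)}}\langle\bar h\star h,\varphi(\bZ)_s\rangle\,\bX^{\bar h}_{s,t}$$
lies in $C_2^{(N-|h|)\alpha}$ for every such $h$.

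The scalar component $h=\bm 1$ carries the essential content and is the cleanest case to carry out first. Taylor-expand $\varphi$ about $Z_s$:
$$\varphi(Z_t)-\varphi(Z_s)=\sum_{k=1}^{N-1}\frac{1}{k!}D^k\varphi(Z_s)(\delta Z_{s,t})^{\otimes k}+\rho_{s,t},\qquad |\rho_{s,t}|\lesssim|t-s|^{N\alpha},$$
using $Z\in C^\alpha$ and $\varphi\in C^N$. Then substitute the controlled-path expansion $\delta Z_{s,t}=\sum_{\bar h\in\cF^+_{(N-1)}}Z^{\bar h}_s\,\bX^{\bar h}_{s,t}+R^{\bm 1}_{s,t}$ into each slot of $D^k\varphi(Z_s)$, expand multilinearly, and apply the character property $\bX^{\bar h_1}_{s,t}\cdots\bX^{\bar h_k}_{s,t}=\bX^{\bar h_1\cdots\bar h_k}_{s,t}$ for $\bX_{s,t}\in\Char{\cH}{\R}$. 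All monomials containing a factor $R^{\bm 1}$ can be absorbed into $\tilde R^{\bm 1}_{s,t}$, since each such term carries a Hölder exponent at least $(N-1)\alpha + \alpha = N\alpha$; the purely-$\bX$ monomials, regrouped by the forest product $\bar h=\bar h_1\cdots\bar h_k$, contribute the coefficient
$$\sum_{k\ge 1}\frac{1}{k!}\sum_{f_1\cdots f_k=\bar h}D^k\varphi(Z_s)(Z^{f_1}_s,\ldots,Z^{f_k}_s)=Y^{\bar h}_s$$
in front of $\bX^{\bar h}_{s,t}$, precisely by the definition of $Y^{\bar h}$. Since $\bar h\star\bm 1=\bar h$ under the identification $\cH\hookrightarrow\cH^*$, and truncation at $|\bar h|\le N-1$ removes only terms of order at least $|t-s|^{N\alpha}$, this gives $|\tilde R^{\bm 1}_{s,t}|\lesssim|t-s|^{N\alpha}$.

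For a general $h\in\cF^+_{(N-1)}$ the argument is structurally identical but more bookkeeping-heavy: Taylor-expand each appearance of $D^k\varphi(Z_t)$ in the definition of $Y^h_t$ about $Z_s$ up to the residual order $N-|h|-k-1$, and expand each $\delta Z^{f_i}_{s,t}$ via its own controlled-path formula~\eqref{eq:ctrlh}. The main obstacle is the combinatorial matching: after reorganisation, the coefficient of each $\bX^{\bar h}_{s,t}$ must be shown to equal $\langle\bar h\star h,\varphi(\bZ)_s\rangle=\langle\bar h\otimes h,\Delta\varphi(\bZ)_s\rangle$. This identification reduces to the symmetric-brace formula $\zeta_{\bar h}\star\zeta_h=\zeta_{(h\curvearrowleft\bar h_{(1)})\bar h_{(2)}}$ of \Cref{numba:pre-Lie}, which encodes precisely the Faà di Bruno combinatorics of grafting decorated forests into higher derivatives of $\varphi$; this is exactly why the specific multi-derivative form of the definition of $Y^h$ is the correct one. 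Once this algebraic identity is in place, remainder control is automatic: each leftover contribution to $\tilde R^h$ is either a Taylor remainder of order $|t-s|^{(N-|h|)\alpha}$ or a product of a factor $R^{f}$ with iterated-integral factors whose Hölder exponents sum to at least $(N-|h|)\alpha$. Summation over $h\in\cF^<_{(N)}$ yields $\varphi(\bZ)\in\D{\alpha}{\bX}$, with a norm bound depending on $\|\varphi\|_{C^N}$, $\cnorm{\bX}_\alpha$ and $\cnorm{\bZ}_\alpha$.
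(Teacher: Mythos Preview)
The paper does not prove this lemma; it is merely \emph{recalled} from \cite{Gub10,HK2015} without argument. Your sketch follows precisely the standard approach of those references: Taylor-expand $\varphi$ about $Z_s$, substitute the controlled-path expansion of each increment, collapse products of rough-path components via the character property $\bX^{\bar h_1}_{s,t}\cdots\bX^{\bar h_k}_{s,t}=\bX^{\bar h_1\cdots\bar h_k}_{s,t}$, and identify the resulting coefficients with the prescribed $Y^{\bar h}$ using the grafting/brace combinatorics of \Cref{numba:pre-Lie}. The $h=\bm 1$ case is correctly and completely argued; for general $h$ you rightly isolate the crux (the Fa\`a-di-Bruno--type matching of $\langle\bar h\star h,\varphi(\bZ)_s\rangle$ with the expanded coefficients) without spelling it out, which is the level of detail one finds in the cited sources.

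One minor quibble: the phrase ``each such term carries a H\"older exponent at least $(N-1)\alpha+\alpha=N\alpha$'' is not the right accounting. The correct observation is simply that any monomial in the expansion of $(\delta Z_{s,t})^{\otimes k}$ containing at least one factor $R^{\bm1}_{s,t}$ has H\"older exponent at least $N\alpha$ (the minimum being attained at $k=1$), since $R^{\bm1}\in C_2^{N\alpha}$ and every remaining factor $Z^{\bar h}_s\bX^{\bar h}_{s,t}$ contributes a nonnegative additional exponent. The conclusion is unaffected.
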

\begin{proof}[Proof of \Cref{thm:ItoLyonscts}]

  Note that by definition $\Phi \colon \R^d \times  \BRP^\alpha \rightarrow E^{\alpha,\beta}$ satisfies $\Phi (\cdot, \bX) \in \D{\beta,\alpha}\bX$.
Hence we only need to establish continuity of $\Phi$.
For simplicity we only deal with the single noise case, i.e., \(|A|=1\) since the general case differs from this case
only in notation.
The proof follows ideas present in \cite{BFT2020}, in the setting of discrete rough paths.
First, note that the controlled path \(\bY=\Phi(\xi,\bX)\) solves the fixed-point equation
\[
  \bY_t=\bY_0+\mathfrak I_\bX(f(\bY))_t
\]
in \(\D\alpha\bX\).
Therefore, by \Cref{cont:int_map} we immediately obtain the bound
\[
  \cnorm{\Phi(\xi,\bX);\Phi(\tilde\xi,\tilde\bX)}_\alpha\le C\left( \cnorm{f(\bY);f(\tilde\bY)}_{\alpha}+\rho_\alpha(\bX,\tilde\bX) \right).
\]
From \Cref{lem:controlled.f} we see that for any forest \(h=\tau_1\dotsm\tau_k\in\cF_{(N)}^{<}\) we have
\begin{align*}
  \langle h, f(\bY)_t\rangle&= D^kf(Y_t)(\bY^{\tau_1},\dotsc,\bY^{\tau_k})\\
  &= D^kf(Y_t)(f_{\tau_1}(Y_t),\dotsc,f_{\tau_k}(Y_t))\\
  &= f_{[\tau_1\dotsm\tau_k]}(Y_t)
\end{align*}
Therefore, the remainder term can be expressed as
\begin{align*}
  R^h_{s,t}&= \delta f(\bY)_{s,t}^h-\sum_{\bar h\in\cF_{(N-|h|-1)}^+}f(\bY)_s^{\bar h\star h}\bX^{\bar h}_{s,t}\\
  &= f_{[h]}(Y_t)-f_{[h]}(Y_s)-\sum_{\bar{h}\in\cF_{(N-|h|-1)}^+}f_{[\bar{h}\star h]}(Y_s)\bX_{s,t}^{\bar{h}}.
\end{align*}
Now, we can perform a Taylor expansion on \(f_{[h]}\) and see that
\begin{equation*}
  f_{[h]}(Y_t)-f_{[h]}(Y_s)= \sum_{k=1}^{N-|h|-1}\frac{1}{k!}D^kf_{[h]}(Y_s)(\delta Y_{s,t})^{\otimes k}+T^h_{s,t}
\end{equation*}
where
\[
  T^h_{s,t}\coloneq\int_0^1\frac{1}{(N-|h|)!}D^{(N-|h|)}f_{[h]}(Y_s)(Y_s+\theta\delta Y_{s,t})^{\otimes(N-|h|)}(1-\theta)^{N-|h|-1}\,\mathrm d\theta.
\]
Thus, the remainder may be rewritten as
\[
  R^h_{s,t} = T^h_{s,t} + \sum_{k=1}^{N-|h|-1}\frac{1}{k!}D^kf_{[h]}(Y_s)(\delta Y_{s,t})^{\otimes k}-\sum_{\bar{h}\in\cF_{(N-|h|-1)}^+}f_{[\bar{h}\star h]}(Y_s)\bX_{s,t}^{\bar{h}}.
\]
Now, fix \(1\le k<N-|h|\). Replacing \cref{eq:sol.exp} into \(D^kf_{[h]}(Y_s)(\delta Y_{s,t})^{\otimes k}\) we obtain
\begin{align*}
  D^kf_{[h]}(Y_s)(\delta Y_{s,t})^{\otimes k}&= D^kf_{[h]}(Y_s)\left( \sum_{\tau\in\cT_{(N)}}Y^\tau_s\bX^\tau_{s,t} + r_{s,t} \right)^{\otimes k}\\
  &= k!\sum_{\bar{h}\in\cF_{(N-|h|-1)^+}}D^kf_{[h]\curvearrowleft\bar{h}}(Y_s)\bX^{\bar{h}}_{s,t}+B^h_{s,t},
\end{align*}
where \(B^h\) contains terms of the form \(D^kf_{[h]}:(f_{\rho_1},\dotsc,f_{\rho_\ell},R^{h_1}_{s,t},\dotsc,R^{h_m}_{s,t})\).

Proceeding similarly with the remainder corresponding to \(\tilde{\bY}\), we obtain that
\[
  |R^h_{s,t}-\tilde{R}^h_{s,t}|\le|T^h_{s,t}-\tilde{T}^h_{s,t}|+|B^h_{s,t}-\tilde{B}^h_{s,t}|.
\]
A straightforward bound gives
\[
  |T^h_{s,t}-\tilde{T}^h_{s,t}|\lesssim\left( \|Y-\tilde Y\|_{\alpha} P_{N-|h|}(\|Y\|_{\alpha},\|\tilde Y\|_\alpha)+\|Y-\tilde{Y}\|_\infty\|\tilde Y\|_\alpha^{(N-|h|)} \right)|t-s|^{(N-|h|)\alpha}
\]
with
\[
  P_m(x,y)\coloneq\sum_{j=1}^mx^{j}y^{m-j}.
\]
The difference \(B^h-\tilde{B}^h\) can be estimated in terms of the difference of remainders
\(R^{\bar{h}}-\tilde{R}^{\bar{h}}\) and the argument is closed recursively.

Due to the bound \eqref{bound:ITOLYONS} we see that $\Phi$ is continuous as a mapping into $(E^{\alpha,\beta},d_\alpha^\flat)$.
Now the flat metric generates the topology of the Banach bundle $E^{\alpha,\beta}$, \Cref{prop:metricsequivalent}, whence the continuity of $\Phi$ follows.
\end{proof}

\appendix

\section{Primitive elements and tree bases}\label{ap:prim}
This section presents tables of the primitive elements forests up to level four. We use these elements to generate a new basis by applying the natural growth operator (cf. \Cref{par:prim}). At the end, we apply our approximation procedure to an example. For calculating the primitive elements, we use the following operator defined in \cite[Theorem 9.6]{F2002},
\begin{align*}
&\pi_{1}:\mathcal{H}\rightarrow \mathrm{Prim},\\
& \pi_1 (h)=h-\sum_{(h)}h^{1}\btop \pi_{1}(h^2), 
\end{align*}
where $\Delta^{\prime}h=\sum_{(h)}h^{1}\otimes h^2$ and $\pi_1(\Forest{[]})=\Forest{[]}$. By \cite[Theorem 9.6]{F2002}, this operator is surjective. Accordingly, since this is also explicitly defined, we can quickly generate a basis for the space of primitive elements. The following vectors constitute a basis for this space up to level five.
\begin{center}
	\begin{tabular}{| m{2cm} || m{6cm}| m{6cm} | } 
		\hline
		Level (1)& $\pi_1(\Forest{[]})=\Forest{[]}$ & \\
		\hline
		Level (2)  & $\pi_1(\Forest{[]}\Forest{[]})=\Forest{[]}\Forest{[]}-2\Forest{[[]]}$  &   \\
		\hline
		Level (3)  & $\pi_1(\Forest{[]}\Forest{[]}\Forest{[]})=\Forest{[]}\Forest{[]}\Forest{[]}-3\Forest{[]}\Forest{[[]]}+3\Forest{[[[]]]}$  &   \\
		\hline
		Level (4)  & $\pi_1(\Forest{[]}\Forest{[]}\Forest{[]}\Forest{[]})=\Forest{[]}\Forest{[]}\Forest{[]}\Forest{[]}-4\Forest{[]}\Forest{[]}\Forest{[[]]}+4\Forest{[]}\Forest{[[[]]]}-4\Forest{[[[[]]]]}+4\Forest{[[]]}\Forest{[[]]}+2\Forest{[[][][]]}-4\Forest{[[][[]]]}+2\Forest{[[[][]]]}-2\Forest{[]}\Forest{[[][]]}$  & $\pi_{1}(\Forest{[]}\Forest{[]}\Forest{[[]]})= \Forest{[[][][]]}+\Forest{[[]]}\Forest{[[]]}-2\Forest{[[][[]]]}+\Forest{[[[][]]]}-\Forest{[]}\Forest{[[][]]}$\\
		\hline
	\end{tabular}
\end{center}

We can now apply to \Cref{Foissy_Basis} and find a new basis for $\mathcal{H}$. Here are the remaining vectors we need to add to construct a basis for the space of forests up to level four.

\begin{center}
\begin{tabular}{| m{2cm} || m{4cm}| m{4cm} | m{4cm} |  } 
	\hline
Level (2)& $\Forest{[]}\btop\Forest{[]}=\Forest{[[]]}$ &  & \\
\hline
Level (3)  & $\btop(\Forest{[]},\Forest{[]},\Forest{[]})=\Forest{[[[]]]}$  &  $\pi_1(\Forest{[]}\Forest{[]})\btop\Forest{[]}=\Forest{[[][]]}-2\Forest{[[[]]]}$ & $\Forest{[]}\btop\pi_{1}(\Forest{[]}\Forest{[]})=\Forest{[]}\Forest{[[]]}-\Forest{[[[]]]}-\Forest{[[][]]}$ \\
\hline
\end{tabular}
\end{center}
For the level (4),

\begin{center}
	\begin{tabular}{| m{4.8cm} | m{4.8cm}| m{4.8cm} |  } 
		\hline
		  $\btop\big{(}\Forest{[]},\Forest{[]},\pi_{1}(\Forest{[]}\Forest{[]})\big{)}=\Forest{[]}\Forest{[[[]]]}-\Forest{[[[]]]}-\Forest{[[][[]]]}$  &  $\btop\big{(}\Forest{[]},\pi_{1}(\Forest{[]}\Forest{[]}),\Forest{[]}\big{)}=,\Forest{[[][[]]]}-\Forest{[[[]]]}-\Forest{[[[][]]]}$ & $\btop\big{(}\pi_{1}(\Forest{[]}\Forest{[]}),\Forest{[]},\Forest{[]}\big{)}=\Forest{[[[][]]]}-2\Forest{[[[[]]]]}$    \\
		\hline
	 $\btop\big{(}\pi_{1}(\Forest{[]}\Forest{[]}),\pi_1(\Forest{[]}\Forest{[]})\big{)}=\Forest{[]}\Forest{[[][]]}-\Forest{[[[][]]]}-\Forest{[[][][]]}-2\Forest{[]}\Forest{[[[]]]}+2\Forest{[[[[]]]]}+2\Forest{[[][[]]]}$ & $\pi_{1}(\Forest{[]}\Forest{[]}\Forest{[]})\btop\Forest{[]}=\Forest{[[][][]]}-3\Forest{[[][[]]]}+3\Forest{[[[[]]]]}$&$\Forest{[]}\btop\pi_{1}(\Forest{[]}\Forest{[]}\Forest{[]})=\Forest{[]}\Forest{[]}\Forest{[]}\Forest{[[]]}-\Forest{[[]]}\Forest{[[]]}-\Forest{[]}\Forest{[[[]]]}-\Forest{[]}\Forest{[[][]]}+\Forest{[[[[]]]]}+\Forest{[[[][]]]}+\Forest{[[][[]]]}$
		\\
		\hline
			&$\btop(\Forest{[]},\Forest{[]},\Forest{[]},\Forest{[]})=\Forest{[[[[]]]]}$& 
		\\
		\hline
	\end{tabular}
\end{center}

We now apply our approximation scheme to the \Cref{ex:ctrl}.
  \begin{example}
Recall from the example that we consider a path \(\bZ\in\mathscr D_{\bX}^{\alpha}\) for \(\alpha\in(\tfrac15,\tfrac14)\). We can rewrite the controlledness condition of $\bZ$ with respect to $\bX$ in terms of the primitive basis as shown below:
\begin{align*}
		\delta Z^{\bm 1}_{s,t}&= \begin{multlined}[t]Z_s^{\tdot}\bX_{s,t}^{\tdot}+Z^{\tdot\tdot}_s\bX_{s,t}^{\tdot\tdot-2\tlI}+\left( 2Z_s^{\tdot\tdot}+Z_s^{\tlI}
			\right)\bX_{s,t}^{\tlI}+Z_s^{\tdot\tdot\tdot}\bX_{s,t}^{\tdot\tdot\tdot-3\tdot\tlI+3\tlII}\\
			+\left( 3Z_s^{\tdot\tdot\tdot}+Z_s^{\tdot\tlI}+Z_s^{\tv} \right)\bX_{s,t}^{\tv-2\tlII}+\left( 3Z_s^{\tdot\tdot\tdot}+Z_s^{\tdot\tlI}
			\right)\bX_{s,t}^{\tdot\tlI-\tlII-\tv}\\
			+\left( 6Z_s^{\tdot\tdot\tdot}+3Z_s^{\tdot\tlI}+2Z_s^{\tv}+Z_s^{\tlII} \right)\bX_{s,t}^{\tlII}+R^{\bm 1}_{s,t}
		\end{multlined}\\
		\delta Z_{s,t}^{\tdot}&= \begin{multlined}[t]
			\left( 2Z_s^{\tdot\tdot}+Z_s^{\tlI} \right)\bX^{\tdot}_{s,t}+\left( 3Z_s^{\tdot\tdot\tdot}+Z_s^{\tdot\tlI}+Z_s^{\tv}
			\right)\bX_{s,t}^{\tdot\tdot-2\tlI}\\
			+\left( 6Z_s^{\tdot\tdot\tdot}+3Z_s^{\tdot\tlI}+2Z_s^{\tv}+Z_s^{\tlII} \right)\bX_{s,t}^{\tlI}+R_{s,t}^{\tdot}
		\end{multlined}\\
		\delta Z_{s,t}^{\tdot\tdot}&= \left( 3Z_s^{\tdot\tdot\tdot}+Z_s^{\tdot\tlI}
		\right)\bX_{s,t}^{\tdot}+R_{s,t}^{\tdot\tdot},\\
		\delta Z_{s,t}^{\tlI}&= \left( Z_s^{\tdot\tlI}+Z_s^{\tlII}+Z_s^{\tv} \right)\bX_{s,t}^{\tdot}+R_{s,t}^{\tlI}.
	\end{align*}
Let us to fix a dissection \(\pi=\{0=t_0<t_1<\dotsb<t_N<t_{N+1}=T\}\) of \([0,T]\) and set $I_k := [t_k,t_{k+1}]$. For
$ s\in I_{k} $, we define
\begin{align*}
	&\tilde{Z}^{\Forest{[[[]]]}}_{s}={Z}^{\Forest{[[[]]]}}_{t_{k}}+\frac{s-t_k}{t_{k+1}-t_k}{Z}^{\Forest{[[[]]]}}_{t_{k},t_{k+1}}, \ \ \ \ \ \ \ \ \ \ \ \ \ \tilde{Z}^{\Forest{[]}\,\Forest{[[]]}}_{s}={Z}^{\Forest{[]}\,\Forest{[[]]}}_{t_{k}}+\frac{s-t_k}{t_{k+1}-t_k}{Z}^{\Forest{[]}\,\Forest{[[]]}}_{t_{k},t_{k+1}}\\
	&\tilde{Z}^{\Forest{[[],[]]}}_{s}={Z}^{\Forest{[[],[]]}}_{t_{k}}+\frac{s-t_k}{t_{k+1}-t_k}{Z}^{\Forest{[[],[]]}}_{t_{k},t_{k+1}}, \ \ \ \ \ \ \ \ \  \tilde{Z}^{\Forest{[]}\,\Forest{[]}\, \Forest{[]}}_{s}={Z}^{\Forest{[]}\,\Forest{[]}\, \Forest{[]}}_{t_{k}}+\frac{s-t_k}{t_{k+1}-t_k}{Z}^{\Forest{[]}\,\Forest{[]}\, \Forest{[]}}_{t_{k},t_{k+1}} .
\end{align*}
We then define
\begin{align*}
	\tilde{Z}^{\Forest{[]}\,\Forest{[]}}_{s} = {Z}^{\Forest{[]}\,\Forest{[]}}_{t_k} + &\int^{s}_{t_{k}}(3\tilde{Z}^{\Forest{[]}\,\Forest{[]}\,\Forest{[]}}_{\tau} + \tilde{Z}_{\tau}^{\Forest{[]}\,\Forest{[[]]}})\mathrm{d}\bX^{\Forest{[]}}_{\tau} + \frac{s-t_k}{t_{k+1}-t_k}\bigg{[}{Z}^{\Forest{[]}\,\Forest{[]}}_{t_{k},t_{k+1}} - \int^{t_{k+1}}_{t_{k}}(3\tilde{Z}^{\Forest{[]}\,\Forest{[]}\,\Forest{[]}}_{\tau} + \tilde{Z}_{\tau}^{\Forest{[]}\,\Forest{[[]]}})\mathrm{d}\bX^{\Forest{[]}}_{\tau}\bigg{]},
\end{align*}
and
\begin{align*}
	\tilde{Z}^{\Forest{[[]]}}_s={Z}^{\Forest{[[]]}}_{t_k}+&\int_{t_k}^{s}(\tilde{Z}^{\Forest{[]}\,\Forest{[[]]}}_{\tau}+2\tilde{Z}_{\tau}^{\Forest{[[][]]}}+\tilde{Z}_{\tau}^{\Forest{[[[]]]}})\mathrm{d}\bX^{\Forest{[]}}_{\tau}+\frac{s-t_k}{t_{k+1}-t_k}\bigg{[}{Z}^{\Forest{[[]]}}_{t_{k},t_{k+1}}-\int_{t_k}^{t_{k+1}}(\tilde{Z}^{\Forest{[]}\,\Forest{[[]]}}_{\tau}+2\tilde{Z}_{\tau}^{\Forest{[[][]]}}+\tilde{Z}_{\tau}^{\Forest{[[[]]]}})\mathrm{d}\bX^{\Forest{[]}}_{\tau}\bigg{]}.\\
\end{align*}
Finally, set
\begin{align*}
	\tilde{Z}^{\Forest{[]}}_{s}=Z_{t_k}^{\Forest{[]}}& + \int_{t_k}^{s}(2\tilde{Z}_{\tau}^{\Forest{[]}\,\Forest{[]}}+\tilde{Z}_{\tau}^{\Forest{[[]]}})\mathrm{d}\mathbf{X}_{\tau}^{\Forest{[]}} + \int_{t_k}^{s}(3\tilde{Z}_{\tau}^{\Forest{[]}\,\Forest{[]}\,\Forest{[]}}+\tilde{Z}_{\tau}^{\Forest{[[][]]}}+\tilde{Z}_{\tau}^{\Forest{[]}\,\Forest{[[]]}})\mathrm{d}\mathbf{X}_{\tau}^{\Forest{[]}\Forest{[]}-2\Forest{[[]]}}+\\&\frac{s-t_k}{t_{k+1}-t_k}\bigg{[}Z^{\Forest{[]}}_{t_{k},t_{k+1}}-\int_{t_k}^{t_{k+1}}(2\tilde{Z}_{\tau}^{\Forest{[]}\,\Forest{[]}}+\tilde{Z}_{\tau}^{\Forest{[[]]}})\mathrm{d}\mathbf{X}_{\tau}^{\Forest{[]}}-\int_{t_k}^{t_{k+1}}(3\tilde{Z}_{\tau}^{\Forest{[]}\,\Forest{[]}\,\Forest{[]}}+\tilde{Z}_{\tau}^{\Forest{[[][]]}}+\tilde{Z}_{\tau}^{\Forest{[]}\,\Forest{[[]]}})\mathrm{d}\mathbf{X}_{\tau}^{\Forest{[]}\Forest{[]}-2\Forest{[[]]}}\bigg{]},
\end{align*} 
and
\begin{align*}
	\tilde{Z}_{s}^{\bm 1} = Z_{t_{k}}^{\bm 1} & + \int_{t_k}^{s}\tilde{Z}^{\Forest{[]}}_{\tau}\mathrm{d}\mathbf{X}_{\tau}^{\Forest{[]}} + \int_{t_k}^{s}\tilde{Z}^{\Forest{[]}\,\Forest{[]}}_{\tau}\mathrm{d}\mathbf{X}^{\Forest{[]}\Forest{[]}-2\Forest{[[]]}}_{\tau} + \int_{t_k}^{s} \tilde{Z}^{\Forest{[]}\,\Forest{[]}\,\Forest{[]}}_{\tau}\mathrm{d}\mathbf{X}^{\Forest{[]}\Forest{[]}\Forest{[]}-3\Forest{[]}\Forest{[[]]}+3\Forest{[[[]]]}}_{\tau} + \\&\frac{s-t_k}{t_{k+1}-t_k}\bigg{[}Z_{t_{k},t_{k+1}}^{\bm 1}-\int_{t_k}^{t_{k+1}} \tilde{Z}^{\Forest{[]}}_{\tau}\mathrm{d}\mathbf{X}_{\tau}^{\Forest{[]}}-\int_{t_k}^{t_{k+1}}\tilde{Z}^{\Forest{[]}\,\Forest{[]}}_{\tau}\mathrm{d}\mathbf{X}^{\Forest{[]}\Forest{[]}-2\Forest{[[]]}}_{\tau}- \int_{t_k}^{t_{k+1}} \tilde{Z}^{\Forest{[]}\,\Forest{[]}\,\Forest{[]}}_{\tau}\mathrm{d}\mathbf{X}^{\Forest{[]}\Forest{[]}\Forest{[]}-3\Forest{[]}\Forest{[[]]}+3\Forest{[[[]]]}}_{\tau}\bigg{]}.
\end{align*}

\end{example}
\bibliographystyle{arxivalpha}
\bibliography{geometry_controlled_paths}
\end{document}